\theoremstyle{plain}
\newtheorem{thm}{Theorem}
\newtheorem{theorem}{Theorem}[section]
\newtheorem{proposition}[theorem]{Proposition}
\newtheorem{lemma}[theorem]{Lemma}
\newtheorem{corollary}[theorem]{Corollary}
\theoremstyle{definition}
\newtheorem{definition}[theorem]{Definition}
\newtheorem{remark}[theorem]{Remark}
\newcommand{\nc}{\newcommand}
\nc{\dmo}{\DeclareMathOperator}
\nc{\Q}{\mathbb{Q}}
\nc{\R}{\mathbb{R}}
\nc{\RP}{\mathbb{RP}^1}
\nc{\Z}{\mathbb{Z}}
\renewcommand{\P}{\mathbb{P}}
\nc{\ZZ}{\mathbb{Z}[\mathbb{Z}]}
\nc{\C}{\mathbb{C}}
\nc{\cS}{\mathcal{S}}
\nc{\iso}{\cong}
\dmo{\Mod}{Mod}
\dmo{\Ig}{\mathcal{I}_g}
\dmo{\Span}{span}
\dmo{\Diff}{Diff}
\dmo{\Homeo}{Homeo}
\dmo{\dist}{dist}
\dmo\BDiff{BDiff}
\dmo\SO{SO}
\dmo\slide{sl}
\dmo\im{im}
\dmo\id{id}
\dmo\Fix{Fix}
\dmo\Out{Out}
\dmo\Sp{Sp}
\dmo\GL{GL}
\dmo\U{U}
\dmo\SU{SU}
\renewcommand{\epsilon}{\varepsilon}
\nc{\coloneq}{\mathrel{\mathop:}\mkern-1.2mu=}
\nc{\margin}[1]{\marginpar{\scriptsize #1}}
\nc{\para}[1]{\bigskip\noindent\textbf{#1}}
\begin{document}
\title{Exponential Torsion Growth for random 3-manifolds}
\date{March 10, 2017}
\thanks{H. Baik, I. Gekhtman, U. Hamenst\"adt supported by ERC Advanced
Grant ``Moduli'',\\
B. Petri supported by the Max Planck Institut Bonn.\\
AMS subject classification: 57M10 (57Q10)}
\author[]{Hyungryul Baik, David Bauer, Ilya Gekhtman, Ursula Hamenst{\"a}dt,
Sebastian Hensel, Thorben Kastenholz, Bram Petri, Daniel Valenzuela}
\maketitle

\begin{abstract}
  We show that a random $3$--manifold with positive first Betti number admits a tower
  of cyclic covers with exponential torsion growth.
\end{abstract}


\section{Introduction}
\label{sec:intro}

Given a manifold $M$ and a 
\emph{tower of coverings} of $M$, i.e. a sequence
\[ \dots \to M_n \to M_{n-1} \to \dots \to M_1 \to M \]
of finite covers, one can ask about the growth of 
topological invariants for the manifolds 
in the sequence. In the case that
$M$ is a hyperbolic $3$-manifold of finite volume, the study of 
such questions 
led to interesting conjectures 
which relate the growth of invariants of the sequence to invariants of 
hyperbolic 3-space.

More concretely, in
the case of a tower of congruence covers of a closed arithmetic
hyperbolic 3-manifold, conjecturally the growth rate of the torsion
$H_1(M_i,\mathbb{Z})_{\rm tor}$ 
in the first homology group coincides with the $\ell^2$-torsion of
$\mathbb{H}^3$, which equals $\frac{1}{6\pi}$ (Conjecture 1.4 
in \cite{BV} proposes a slightly weaker statement).

Much earlier, torsion homology growth was studied for towers of abelian covers
of knot complements. First results on the relation of this growth rate
to the (logarithmic) Mahler measure of the Alexander polynomial of the 
knot or link can be found in
\cite{Ri90} and \cite{GS91}. Equality of this
growth rate and the Mahler measure of the Alexander polynomial are due to 
Silver and Williams
\cite{SW02a}, and extensions of these results and an interpretation 
in the context of $\ell^2$-invariants can be found in  
\cite{SW02b} as well as in the more recent papers
\cite{BV,R,Le}.



\smallskip 
As it became apparent in recent years, the existence of towers of covers
with exponential torsion homology growth should be abundant for 
3-manifolds. The recent work \cite{BGS17} explains that however,
such towers do not exist for manifolds of higher dimension. 
The goal of this paper is to study the existence of towers of
cyclic covers with exponential torsion growth for random $3$--manifolds in a sense that we make precise next.

Any closed 3-manifold $M$ admits a \emph{Heegaard decomposition}.
This means that $M$ can be obtained by gluing two \emph{handlebodies}
of some genus $g\geq 0$ with a diffeomorphism of their boundaries.
The smallest genus of a handlebody which gives rise to 
$M$ in this way is called the \emph{Heegaard genus} of $M$.

For a fixed
base identification, the manifold $M$ only depends
on the element in the \emph{mapping class group}
${\rm Mod}(S_g)$ 
of the
boundary surface $S_g$ of the handlebody 
defined by the gluing diffeomorphism.
We denote by $N_\varphi$ the closed
$3$-manifold defined by the gluing map $\varphi\in {\rm Mod}(S_g)$.
Thus topological properties of closed $3$-manifolds $N_\varphi$ are 
directly related to properties of the mapping class $\varphi$.

This viewpoint was used by Dunfield and Thurston 
\cite{DT} to define the 
notion of a \emph{random 3-manifold} using a 
random walk on the mapping class group.
Embarking from \cite{DT}, 
the purpose of this work is to study cyclic covers\footnote{We always assume that cyclic covers are regular.} 
of random hyperbolic 3-manifolds with positive first
Betti number. 

\smallskip Let $\Ig$ be the \emph{Torelli subgroup} of
${\rm Mod}(S_g)$, i.e. the subgroup formed by all those mapping
classes which act trivially on $H_1(S_g;\mathbb{Z})$. For $g \geq 3$
this is a finitely generated group.  We use the following model for
random $3$--manifolds with large Betti number, which is inspired by
(but slightly different from) the Dunfield--Thurston model.  Take any
probability measure $\mu$ on ${\mathcal I}_g$ whose support equals a
finite set which generates ${\mathcal I}_g$ as a semigroup. Such a
$\mu$ defines a random walk on ${\mathcal I}_g$. We say that a
property ${\mathcal P}$ holds for a random $3$-manifold of Heegaard
genus $g$ and maximal homology rank if the following holds: the
proportion of $3$-manifolds with ${\mathcal P}$ which are defined by a
gluing with an element of the $n$-th step of the walk tends to one as
$n\to \infty$, independently of $\mu$.  To motivate this model, note
that any $3$--manifold $M$ with Heegaard genus $g$ and first Betti
number $b_1(M)=g$ is obtained as $N_\varphi$ for some
$\varphi \in \Ig$ (compare Section \ref{sec:setup}) and by \cite{DT}
(see also \cite{LMW14}), the Heegaard genus of a $3$-manifold
obtained from a random gluing in this sense is $g$. Furthermore, by
a theorem by Maher \cite{Ma10}, a random manifold with Heegaard genus $g$
and maximal homology rank is indeed hyperbolic.

\begin{thm}\label{maintheorem1}
A random $3$-manifold of Heegaard genus $g\geq 3$ with maximal homology rank has
a tower of cyclic covers with exponential torsion homology growth.
\end{thm}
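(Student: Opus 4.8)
To prove Theorem~\ref{maintheorem1}, the plan is to produce, for a random $\varphi\in\mathcal I_g$, a single primitive class $\phi\in H^1(N_\varphi;\Z)\cong\Z^g$ whose associated one--variable Alexander polynomial $\Delta_\phi(N_\varphi)\in\Z[t^{\pm1}]$ has positive logarithmic Mahler measure, and then to invoke the torsion--Mahler--measure equality discussed in the introduction (Silver--Williams and its descendants, \cite{SW02a,SW02b,R,Le}): for such a $\phi$ the tower of regular cyclic covers $N_\varphi=N^{(1)}\leftarrow N^{(2)}\leftarrow\cdots$ dual to $\phi$ satisfies $\lim_k\tfrac1k\log\bigl|H_1(N^{(k)};\Z)_{\mathrm{tor}}\bigr|=m(\Delta_\phi(N_\varphi))>0$, i.e.\ exponential torsion growth. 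So the theorem reduces to controlling the Mahler measure of the Alexander polynomial of $N_\varphi$ along the random walk.

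The input for this is the formula, obtained above from the Heegaard presentation $\pi_1(N_\varphi)=\pi_1(H_g)\ast_{\pi_1(S_g)}\pi_1(H_g')$ by Fox calculus, expressing the multivariable Alexander polynomial $\Delta(N_\varphi)\in\Z[H_1(N_\varphi;\Z)]\cong\Z[x_1^{\pm1},\dots,x_g^{\pm1}]$ as the order of an Alexander module presented by an explicit matrix over $\Z[x_1^{\pm1},\dots,x_g^{\pm1}]$ built from the Magnus representation of $\varphi$. When $\varphi\in\mathcal I_g$ this matrix is, modulo the augmentation ideal, a fixed constant matrix, and the information relevant to the Mahler measure of $\Delta(N_\varphi)$ is carried by the image $\tau(\varphi)$ of $\varphi$ under the Johnson homomorphism $\tau\colon\mathcal I_g\to\wedge^3H/H$ (with $H=H_1(S_g;\Z)$), which for $g\ge 3$ is a finitely generated free abelian group of positive rank $N$ onto which $\tau$ surjects. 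Moreover $\deg\Delta(N_\varphi)\le D$ for a constant $D=D(g)$, and the coefficients of $\Delta(N_\varphi)$ are fixed polynomial functions of $\tau(\varphi)$.

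Next I would show that the set of $w\in\wedge^3H/H\otimes\R\cong\R^N$ for which the polynomial produced by this formula has positive Mahler measure is the complement of a proper subvariety. Indeed, by Kronecker's theorem and Boyd's multivariable refinement, $m(\Delta(N_\varphi))=0$ only when $\Delta(N_\varphi)$ is, up to a monomial unit, a product of generalized cyclotomic factors; since $\deg\Delta(N_\varphi)\le D$ there are only finitely many such polynomials, so the ``bad'' locus $\mathcal Z\subseteq\R^N$ — the $w$ giving one of these, together with those giving $\Delta=0$ — is Zariski--closed. That $\mathcal Z\neq\R^N$ is, to my mind, the crux of the argument: to establish it one computes $\Delta(N_{\varphi_0})$ for one explicit $\varphi_0\in\mathcal I_g$ (a short product of bounding pair maps and separating Dehn twists, with $\tau(\varphi_0)$ chosen freely in $\wedge^3H/H$) and checks by hand that it is non--cyclotomic — for instance that some specialization equals $t^2-3t+1$, whose Mahler measure is $\log\tfrac{3+\sqrt5}{2}>0$. \emph{This is the step I expect to be the main obstacle}: making the Alexander--polynomial formula precise enough to compute with, and exhibiting a genuinely non--cyclotomic example.

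Finally, the bad set $\mathcal B=\tau^{-1}(\mathcal Z)\subseteq\mathcal I_g$ is negligible for the random walk, for every admissible $\mu$. Since $\operatorname{supp}\mu$ generates $\mathcal I_g$ as a semigroup, $\tau(\operatorname{supp}\mu)$ generates $\wedge^3H/H\cong\Z^N$ as a semigroup (using Johnson's surjectivity and $g\ge 3$); in particular $\tau(\operatorname{supp}\mu)$ affinely spans $\R^N$, so the step distribution $\tau_\ast\mu$ has positive definite covariance and the random walk $\tau_\ast\mu^{\ast n}$ on $\Z^N$ obeys a local central limit theorem. Hence its atoms are $O(n^{-N/2})$, it concentrates on a ball of radius $O(\sqrt n\,\log n)$ about $n$ times its drift, and the proper subvariety $\mathcal Z$ meets that ball in $O\bigl((\sqrt n\,\log n)^{N-1}\bigr)$ lattice points, so $\mathbb{P}[\varphi_n\in\mathcal B]\le\mathbb{P}[\tau_\ast\mu^{\ast n}\in\mathcal Z]=O\bigl(n^{-1/2}(\log n)^{N-1}\bigr)\to 0$. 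For $\varphi_n\notin\mathcal B$ we have $\Delta(N_{\varphi_n})\neq 0$ with $m(\Delta(N_{\varphi_n}))>0$, so by the Boyd--Lawton theorem on limits of Mahler measures there is a primitive $\phi$ with $\Delta_\phi(N_{\varphi_n})\neq 0$ and $m(\Delta_\phi(N_{\varphi_n}))>0$; feeding this $\phi$ into the first paragraph completes the proof.
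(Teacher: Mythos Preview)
Your proposal contains a genuine gap at the step you yourself flag as the crux, but the problem is earlier and more basic than finding a non-cyclotomic example: the Alexander polynomial $\Delta(N_\varphi)$ is \emph{not} determined by the Johnson image $\tau(\varphi)$, and its degree is \emph{not} bounded by a constant $D(g)$. The Magnus/Prym representation you invoke --- in the paper's notation, the matrix $M_\infty(\varphi)\in\GL_{2g-2}(\Z[t,t^{-1}])$ --- records far more than the first Johnson homomorphism (which is only its linearisation at $t=1$); elements of the Johnson kernel $\mathcal K_g$ act nontrivially on $H_1(S_\infty)$. Concretely, if $\varphi$ has word length $n$ in the support of $\mu$ then the entries of $M_\infty(\varphi)$ have degree of order $n$, so $\deg\det B_\infty(\varphi)$ grows linearly in $n$ (this is exactly the bound used in Proposition~\ref{prop:matrix-condition}). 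Hence there is no fixed finite list of cyclotomic polynomials to rule out, no subvariety $\mathcal Z\subset\R^N$ through which the bad set factors, and the CLT on the abelian quotient $\wedge^3 H/H$ cannot do the job.

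The paper confronts precisely this unbounded-degree issue. Its Proposition~\ref{prop:mahler-measure-constraint} replaces your ``finitely many cyclotomics'' with a dichotomy: either some $\Phi_k$ with $k$ in a fixed finite set divides $\det B_\infty(\varphi)$, or $|\det B_\infty(\varphi)(\xi)|\le\exp(\alpha\deg)$ on the unit circle --- and since $\deg$ is $O(n)$, this becomes a bound $\exp(\alpha n)$. To defeat the second alternative one must show that in fact $|\det B_\infty(\varphi)(\zeta_q)|$ grows like $\exp(\lambda n)$ for some $\lambda>\alpha$. This is achieved by passing to the representations $\rho_q$ of the finite extension $\widehat{\mathcal I}_g^{(q)}$ into $\U(g-1,g-1)$, proving (via Looijenga and Margulis's normal subgroup theorem) that the image is Zariski dense in $\SU(g-1,g-1)$, and then applying Benoist--Quint's law of large numbers for random walks on this \emph{non-abelian semisimple} group to get a positive Lyapunov exponent. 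A random walk on an abelian target cannot produce this exponential separation.
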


A precise version of this result is 
Theorem~\ref{thm:main-theorem-plain} in Section~\ref{sec:finishing}.
We do not discuss the rate of convergence although we believe that it can be derived from careful analysis of Benoist and Quint's work on random walks on reductive groups \cite{BQ,BQ2} (see also Section 7.6 of \cite{K} and \cite{LMW14}).

A result analogous to Theorem~\ref{maintheorem1} remains true for
random $3$--manifolds with positive first Betti number (as opposed to
maximal) by considering random walks not on the Torelli group $\Ig$,
but on a \emph{homology stabiliser} which is defined to be the
subgroup of ${\rm Mod}(S)$ of all mapping classes preserving some
fixed homology class.

Theorem~\ref{maintheorem1}
mainly is a result about the Torelli subgroup of the mapping class
group. It does not rely on any information on the geometry and topology of hyperbolic 
3-manifolds. Very recently, such topological/geometric tools were used by Liu~\cite{L} 
to construct for an arbitrary closed hyperbolic 3-manifold $M$
a tower of 
covers of $M$ with exponential torsion homology growth.
These covers  are however in general not regular. 
The existence of a finite cover $M^\prime$ of $M$ which admits a 
tower of cyclic covers with exponential torsion homology
growth follows from 
Liu and Sun's beautiful virtual domination theorem~\cite{S15a,S15b,LiS16}.

\smallskip
The methods used in the proof of Theorem~\ref{maintheorem1} can also
be used to show that the first Betti number
of a random hyperbolic three-manifold $N_\varphi$  
does not increase by passing
to a finite Abelian cover of fixed degree.
\begin{thm}\label{maintheorem2}
  Fix a natural number $d > 0$.
A random $3$-manifold of Heegaard genus $g\geq 3$ with maximal homology rank has
no Abelian cover of degree $\leq d$ with Betti number $>g$.
\end{thm}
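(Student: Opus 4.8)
The plan is to reduce the statement to a property of the action of the random gluing map $\varphi\in\Ig$ on homology, much as Theorem~\ref{maintheorem1} does. Fix $d>0$. A degree-$\le d$ Abelian cover of $N_\varphi$ corresponds to a surjection $\pi_1(N_\varphi)\twoheadrightarrow A$ onto a finite Abelian group $A$ of order $\le d$, equivalently to a surjection $H_1(N_\varphi;\Z)\twoheadrightarrow A$. Since $N_\varphi$ is built from a genus-$g$ Heegaard splitting with $\varphi\in\Ig$, one has $H_1(N_\varphi;\Z)\cong\Z^g\oplus (\text{torsion depending on }\varphi)$, and more usefully $H_1(N_\varphi;\Z/m)\cong (\Z/m)^g\oplus\operatorname{coker}(\text{reduction mod }m\text{ of a presentation matrix built from }\varphi)$. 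The Betti number of the cover $M_A\to N_\varphi$ associated to $A$ can be computed from the $A$-equivariant homology, i.e. from $H_1(M_A;\Q)=\bigoplus_{\chi}H_1(N_\varphi;\Q_\chi)$ summed over characters $\chi$ of $A$, where $\Q_\chi$ is the corresponding rank-one local system. So I want to show: for a random $\varphi$, and for every finite Abelian $A$ with $|A|\le d$ and every surjection $\rho\colon H_1(N_\varphi)\twoheadrightarrow A$, the twisted homology $H_1(N_\varphi;\Q_\chi)$ vanishes for all nontrivial characters $\chi$ of $A$, so that $b_1(M_A)=|A|\cdot g$ is impossible to exceed — wait, that gives $b_1(M_A)\le |A| g$, not $\le g$; the point is rather that the \emph{new} rational homology from nontrivial $\chi$ vanishes, leaving $b_1(M_A)=b_1(N_\varphi)=g$ on the trivial isotypic part and nothing else.

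The key mechanism is that the twisted first Betti number is governed by the rank of a matrix over the cyclotomic ring $\Z[\zeta_m]$ (or over $\Q(\zeta_m)$) obtained by evaluating a presentation of $\pi_1(N_\varphi)$ at the character; concretely, this is an $\Alexander$-module type computation, and the relevant presentation matrix is $J_\varphi - I$ (or a twisted analogue), where $J_\varphi$ records how $\varphi$ together with the handlebody attaching maps acts. The first step is to make this precise: since $\varphi\in\Ig$ acts trivially on $H_1(S_g;\Z)$, the relevant matrix over $\Q(\zeta)$ is a perturbation of the identity by terms coming from the Johnson homomorphism and higher Torelli data, and I claim it is \emph{generically full rank} over $\Q(\zeta)$ for every primitive $m$-th root of unity $\zeta$ with $m\le d$. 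The second step is to promote "generically" to "with probability tending to one under the random walk": this is where I would invoke the same Benoist--Quint / Furstenberg-type positivity of the random walk on a reductive (here, linear algebraic over a number field, or a product over the finitely many $m\le d$ of $\GL_g(\Q(\zeta_m))$-type) group that underlies Theorem~\ref{maintheorem1}. One shows the Zariski closure of the semigroup generated by $\operatorname{supp}(\mu)$ acting on the finitely many twisted homology data is large enough that the "degenerate locus" (where some twisted $H_1$ is nonzero) is a proper subvariety, hence the random walk escapes it with probability $\to 1$. Since there are only finitely many isomorphism types of $A$ with $|A|\le d$ and, for each, only finitely many relevant characters up to Galois conjugacy, a finite union of probability-zero-in-the-limit events is still probability-zero-in-the-limit.

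The main obstacle I expect is the \emph{uniformity over all surjections} $\rho\colon H_1(N_\varphi)\twoheadrightarrow A$: the number and nature of such surjections depends on $H_1(N_\varphi)$, which itself varies (in particular its torsion grows!), so one cannot naively fix a finite list of characters. The way around this is to observe that a surjection to $A$ factors through $H_1(N_\varphi;\Z/e(A))$ where $e(A)\mid d!$ is the exponent, and then to work uniformly over the finitely many $(\Z/m)$-representations coming from $m\le d$, tracking the twisted homology with $\Z/m$-coefficients or over the ring $\Z[\zeta_m]$ rather than over $\Q$, and using that $b_1$ of the cover is detected by $\Q$-coefficients anyway. A secondary technical point is checking that the "reductive group" hypotheses of Benoist--Quint genuinely apply to the monodromy representation on twisted homology of the mapping torus / Heegaard-glued manifold (as opposed to the surface itself) — this should follow from the analysis already carried out for Theorem~\ref{maintheorem1}, since the twisted homology of $N_\varphi$ is a subquotient, cut out by the handlebody subspaces, of the twisted homology of $S_g$ on which the Torelli-group action is the one studied there. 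Once these are in place, the conclusion $b_1(M_A)=g$ for all degree-$\le d$ Abelian covers of a random $N_\varphi$ is immediate.
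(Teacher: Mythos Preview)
Your approach is correct and is essentially the paper's: reduce to cyclic covers via the character decomposition, identify ``$b_1$ increases'' with a Zariski-closed condition on $\rho_q(\hat\varphi)$ (the paper makes this explicit as $\iota(\det B_q(\varphi))=0$, Proposition~\ref{prop:criterion-betti}), and invoke the same Benoist--Quint input already used for Theorem~\ref{thm:main-theorem-plain}. The ``main obstacle'' you flag, however, is a phantom: for $\varphi\in\Ig$ one has $\varphi_*L=L$, hence $H_1(N_\varphi;\Z)=H_1(S;\Z)/L\cong\Z^g$ is torsion-free, so every surjection $H_1(N_\varphi)\twoheadrightarrow A$ is the pushforward of a \emph{fixed} surjection $H_1(S;\Z)\twoheadrightarrow A$ with $L$ in its kernel (a $K$-cover in the paper's sense), and for $|A|\le d$ there are only finitely many of these, independently of $\varphi$---no uniformity issue arises.
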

A precise version of this result is 
Theorem~\ref{thm:dt} in Section~\ref{sec:finishing}.
Theorem~9.1 in \cite{DT} shows that for a fixed number $k>0$, a random
hyperbolic 3-manifold of Heegaard genus two does not admit an Abelian
cover of degree at most $k$ with positive first Betti number, where
random refers to a random walk on the entire mapping class group.  Our
methods can be adapted to extend the result of \cite{DT} to
arbitrary genus\footnote{In \cite{Ri}, Rivin claims this conclusion
  for solvable covers of random 3-manifolds of arbitrary Heegaard
  genus.  However, his argument seems incomplete, although we believe
  that it can be completed in the Abelian case.}, 
as outlined in the last section.  We chose instead to
focus on the version stated in Theorem~\ref{maintheorem2} as its formulation is
closer to the formulation of Theorem~\ref{maintheorem1}.

As a final application of our methods, we also show that for a random walk on the full mapping class group, the order of $H_1(N_\varphi,\Z)_{\mathrm{tors}}$ grows exponentially in the number of steps of the random walk (Theorem \ref{thm:kowalski}), answering a question of Kowalski \cite{K}.

\smallskip
The proof of Theorem~\ref{maintheorem1} relies on 
the relation between the growth rate of torsion in 
the homology for a tower of cyclic covers of the three-manifold
$N_\varphi$ 
and the Mahler measure of the Alexander polynomial $\Delta$
of the corresponding infinite cyclic covering.

Given $\varphi\in \Ig$, 
there is an infinite cyclic covering of $N_\varphi$  induced by an infinite cyclic 
covering $\widetilde{S}$ of the surface $S$. 
In Section~\ref{sec:matrix} we give an explicit description of the
homology of $\widetilde{S}$ as a $\ZZ$-module.  
We construct
a matrix $M(\varphi)$ with entries in 
the group ring $\Z[\Z]$ which describes the
action of a lift of $\varphi$ on $H_1(\widetilde{S})$. 
In Section~\ref{sec:algebra} we translate the condition that $\Delta$ has
(logarithmic) Mahler measure $0$ into a condition
that is detectable by the action of lifts of $\varphi$ on finite covers $S_q$ of the surface $S$.

Using an idea of Looijenga \cite{Lo}, we then study the action of the
Torelli group on the homology of these covers. The core result is
Proposition~\ref{prop:torelli-image-dense}. It shows that lifts of elements in
the Torelli group generate a dense subgroup of an algebraic group
which is closely related to the automorphism group of the homology of 
some finite cyclic cover of $S$.
Using results of Benoist and Quint \cite{BQ} on random walks on
algebraic groups, we then deduce that random elements in $\Ig$ with
probability one violate the conditions implied by $\Delta$ having Mahler measure zero.

The main novelty of our approach lies in a direct translation of properties
of random walks on the Torelli group into properties of random
walks on algebraic groups. We do not use any of the recent results on
random walks on the mapping class group.  

\textbf{Acknowledgements}: This work was carried out in fall 2015
while all authors were in residence in Bonn. 
All of us thank Gregor Masbaum for useful conversations. 
Sebastian Hensel is grateful to  
Benson Farb for helpful discussions. 
We are particularly grateful to an anonymous referee who pointed out an
error in an earlier version of this paper and whose 
suggestions led moreover to a significant simplification of our argument.

\section{Covers of Surfaces and $3$--manifolds}
\label{sec:setup}
In this section we describe the setup we will use to determine and 
control the homology of covers of $3$--manifolds 
given by Heegaard splittings.
The terminology introduced in this section will be used throughout the article. 

Let $S$ be a surface of genus $g\geq 2$, identified once and for
all with the boundary $\partial V=S$ 
of a handlebody $V$.  Such a handlebody is a compact
manifold with boundary which is homeomorphic to the thickening of
an bouquet of $g$ circles embedded in $\mathbb{R}^3$. 
A \emph{meridian} of $V$ is an essential simple closed
curve on $S$ which bounds a disk in $V$.

Let
$\alpha_1,\ldots,\alpha_g$ be a set of (oriented) simple closed curves in $S$ 
which form a \emph{cut system} for $V$. This means that the curves
 $\alpha_i$ are pairwise 
disjoint meridians for $V$ whose complement
$S-\cup_i\alpha_i$ is connected. In particular, the $\alpha_i$ are pairwise non-homologous and
all non-separating.
Then 
\[ L = \ker(H_1(S;\Z) \to H_1(V;\Z)) = \Span_{\Z}\{[\alpha_1], \ldots, [\alpha_g]\},\]
and $L$ 
is a Lagrangian subspace of 
$H_1(S,\mathbb{R})$ with respect to the algebraic intersection pairing 
\[(\cdot,\cdot):H_1(S,\mathbb{R})\times H_1(S,\mathbb{R})\to \mathbb{R}\] 
on homology.

Let $\beta_1,\ldots,\beta_g$ be a set of simple closed curves on $S$ dual to
the cut system $\{\alpha_i\}$. This means that the curves $\beta_i$ are 
pairwise disjoint  (and transverse to the curves $\alpha_j$ for some
smooth structure), and 
$\#(\alpha_i\cap\beta_j) = \delta_{ij}$. We assume that the
$\alpha_i,\beta_j$ are oriented so that $(\alpha_i,\beta_j)\geq 0$ for all $i,j$.
The $\alpha_i,\beta_i$ project to a symplectic basis
$a_1,\ldots,a_g,b_1,\ldots,b_g$ 
of $H_1(S;\Z)$ (here $a_i=[\alpha_i]$ for the above notation). 

\smallskip
Given any mapping class $\varphi\in \mathrm{Mod}(S)$ of $S$, we denote by $N_\varphi$ the $3$-manifold
given by the Heegaard splitting defined by $\varphi$:
\[ N_\varphi = V \cup_\varphi V \]
With our convention, the identity mapping class gives rise to the manifold
$N_{\mathrm{id}} = S^2\times S^1\sharp \dots \sharp S^2\times  S^1$ ($g$ copies).
We have (see the beginning of Section 8 of \cite{DT}). 

\begin{lemma}\label{lem:heegaard-homology}
\[ H_1(N_\varphi;\Z) = H_1(S;\Z) \left/ \langle L, \varphi_* L \rangle \right. \]
where $\varphi_*$ denotes the induced map of $\varphi$ on homology.  
\end{lemma}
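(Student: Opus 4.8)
The plan is to read off $H_1(N_\varphi;\Z)$ from the Mayer--Vietoris sequence of the Heegaard decomposition. Thicken the two handlebodies slightly to open sets $V_1,V_2\subset N_\varphi$, each a copy of $V$, whose intersection deformation retracts onto the Heegaard surface $S$. Since all four spaces $S$, $V_1$, $V_2$, $N_\varphi$ are connected, the map $\widetilde H_0(S;\Z)\to\widetilde H_0(V_1;\Z)\oplus\widetilde H_0(V_2;\Z)$ is zero, so the relevant segment of the reduced sequence degenerates to
\[
H_1(S;\Z)\xrightarrow{\ (\iota_1,\iota_2)\ } H_1(V_1;\Z)\oplus H_1(V_2;\Z)\xrightarrow{\ \pi\ } H_1(N_\varphi;\Z)\longrightarrow 0,
\]
and hence $H_1(N_\varphi;\Z)=\operatorname{coker}(\iota_1,\iota_2)$ (up to the usual sign conventions in the connecting map, which do not affect the quotient group).

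Next I would identify the two maps. The inclusion $S=\partial V\hookrightarrow V$ induces a surjection $q\colon H_1(S;\Z)\to H_1(V;\Z)$ with $\ker q=L$ — this is exactly the content of the displayed formula for $L$ above — so $H_1(V_i;\Z)\cong H_1(S;\Z)/L$ for $i=1,2$. With the convention that the gluing diffeomorphism $\varphi$ is applied along the second boundary, $\iota_1=q$ and $\iota_2=q\circ\varphi_*$; the opposite convention merely replaces $\varphi_*$ by $\varphi_*^{-1}$, which changes nothing below, since $\varphi_*$ is an automorphism of $H_1(S;\Z)$ carrying $\langle L,\varphi_*^{-1}L\rangle$ isomorphically onto $\langle L,\varphi_*L\rangle$.

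Finally I would produce the isomorphism explicitly. Let $j\colon H_1(S;\Z)\to H_1(N_\varphi;\Z)$ be the composite $H_1(S;\Z)\xrightarrow{q} H_1(V_2;\Z)\hookrightarrow H_1(V_1;\Z)\oplus H_1(V_2;\Z)\xrightarrow{\pi} H_1(N_\varphi;\Z)$, i.e.\ $j(c)=\pi(0,q(c))$. One checks two things. First, $j$ is onto: given $\pi(a,b)$, choose a lift $\tilde a\in H_1(S;\Z)$ of $a$ and subtract the element $(\iota_1\tilde a,\iota_2\tilde a)=(q\tilde a,\ q\varphi_*\tilde a)$ of $\operatorname{im}(\iota_1,\iota_2)$, so that $\pi(a,b)=\pi(0,\ b-q\varphi_*\tilde a)\in\operatorname{im}(j)$. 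Second, $\ker j=\langle L,\varphi_*L\rangle$: indeed $j(c)=0$ iff $(0,q(c))\in\operatorname{im}(\iota_1,\iota_2)$, i.e.\ iff there exists $c'\in\ker q=L$ with $q\varphi_*(c')=q(c)$, i.e.\ iff $c\in\varphi_*L+L$. Thus $j$ descends to the asserted isomorphism $H_1(S;\Z)/\langle L,\varphi_*L\rangle\xrightarrow{\ \sim\ }H_1(N_\varphi;\Z)$.

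There is no serious obstacle: this is the standard computation of the first homology of a union along a surface (equivalently, abelianizing the van Kampen pushout $\pi_1(V_1)\ast_{\pi_1(S)}\pi_1(V_2)$). The only point requiring genuine care is the bookkeeping — fixing once and for all which copy of $V$ carries the twist by $\varphi$, and which quotient $H_1(S;\Z)\twoheadrightarrow H_1(V;\Z)$ plays the role of $q$ — together with the observation, recorded above, that these choices are immaterial because $\varphi_*$ is an automorphism of $H_1(S;\Z)$ preserving the intersection pairing.
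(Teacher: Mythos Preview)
Your argument is correct and is the standard Mayer--Vietoris (equivalently, abelianised van Kampen) computation for a Heegaard splitting. The paper does not actually prove this lemma but simply records it with a reference to \cite[Section~8]{DT}, where precisely this computation is carried out; so your proof fills in exactly what the cited source provides.
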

The following lemma relates the first 
Betti number $b_1(N_\varphi)$ of
$N_\varphi$ to information on the gluing map $\varphi$. For its statement, we need
to introduce certain subgroups of $\mathrm{Mod}(S)$. 
The \emph{Torelli group}  $\Ig$ is the group of all mapping classes acting trivially
on $H_1(S;\Z)$. The \emph{handlebody group} is the subgroup 
$\mathcal{H}_g$ of ${\rm Mod}(S)$ of 
those mapping classes which can be represented by diffeomorphisms of $S$ extending 
to $V$. Finally, given homology
classes $a,b\in H_1(S;\Z)$ we denote the \emph{homology stabiliser group} by
\[ HS(a,b) = \{ f \in \mathrm{Mod}(S) | f_*(a) = a, f_*(b) = b \} \]
Note that $\Ig < HS(a,b)$ for any $a,b$.
\begin{lemma}\label{lem:betti-number-criterion}
  \begin{enumerate}[a)]
  \item $b_1(N_\varphi) \leq g$ with equality if and only
    if $\varphi = \psi_1 \psi_2$ where
    $\psi_1 \in \Ig, \psi_2 \in \mathcal{H}_g$.
  \item $b_1(N_\varphi) \geq 1$ if and only if
    there are     $a\in L, b\in H_1(S;\Z), (a,b) = 1$ so that 
    $\varphi = \psi_1 \psi_2$ with
    $\psi_1 \in HS(a,b), \psi_2 \in \mathcal{H}_g$.
  \end{enumerate}
\end{lemma}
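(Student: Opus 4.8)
The plan is to reduce both statements to linear algebra on $H_1(S;\Z)$ via Lemma~\ref{lem:heegaard-homology}, using two structural inputs about the handlebody group. The first is classical: the image of $\mathcal{H}_g$ under the symplectic representation $\mathrm{Mod}(S)\to\Sp(2g,\Z)$ is exactly the stabiliser $\Sp(2g,\Z)_L$ of the Lagrangian $L$. One inclusion is immediate, since a handlebody mapping class permutes meridians and hence preserves $L$; the reverse inclusion follows from the surjectivity of the natural map $\mathcal{H}_g\to\Out(F_g)$ (with $F_g=\pi_1 V$), which realises the $\GL(g,\Z)$-part, together with the fact that Dehn twists along meridians $\delta$ with $[\delta]=a_i$ or $a_i\pm a_j$ realise the unipotent part of $\Sp(2g,\Z)_L$. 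The second input is that $\Sp(2g,\Z)_L$ acts transitively on the set of pairs $(a,b)$ with $a\in L$ primitive and $(a,b)=1$; this is the only place where I expect genuine work, and I indicate the argument at the end.

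First I would record the homology bookkeeping. Writing $L_\Q:=L\otimes\Q$, tensoring Lemma~\ref{lem:heegaard-homology} with $\Q$ and using that $\dim L=g$ while $\varphi_*$ is symplectic, one obtains
\[ b_1(N_\varphi)=2g-\dim_\Q\big(L_\Q+\varphi_*L_\Q\big)=\dim_\Q\big(L_\Q\cap\varphi_*L_\Q\big). \]
In particular $b_1(N_\varphi)\le g$. Moreover $L$ and $\varphi_*L$ are primitive sublattices of $H_1(S;\Z)$ ($L$ is spanned by part of a symplectic basis, and $\varphi_*L$ is its image under an automorphism), and a primitive sublattice equals the intersection of its $\Q$-span with $H_1(S;\Z)$; hence $L_\Q=\varphi_*L_\Q$ if and only if $L=\varphi_*L$, and $L_\Q\cap\varphi_*L_\Q\neq 0$ if and only if $L\cap\varphi_*L\neq 0$.

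For part (a), equality $b_1(N_\varphi)=g$ holds iff $L_\Q\subseteq\varphi_*L_\Q$, hence (equal dimension) iff $L=\varphi_*L$, i.e.\ iff $\varphi_*\in\Sp(2g,\Z)_L$. If $\varphi=\psi_1\psi_2$ with $\psi_1\in\Ig$ and $\psi_2\in\mathcal{H}_g$, then $\varphi_*=\psi_{2*}$ preserves $L$; conversely, given $\varphi_*\in\Sp(2g,\Z)_L$ I would use the first input to choose $\psi_2\in\mathcal{H}_g$ with $\psi_{2*}=\varphi_*$, whence $\psi_1:=\varphi\psi_2^{-1}\in\Ig$ and $\varphi=\psi_1\psi_2$. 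For part (b), if $\varphi=\psi_1\psi_2$ with $\psi_1\in HS(a,b)$, $a\in L$, and $\psi_2\in\mathcal{H}_g$, then $\varphi_*L=\psi_{1*}\psi_{2*}L=\psi_{1*}L$ (as $\psi_{2*}L=L$), so $a=\psi_{1*}a\in\psi_{1*}L=\varphi_*L$; since also $a\in L$ and $a\neq0$ (because $(a,b)=1$), this forces $b_1(N_\varphi)\ge1$. Conversely, assume $b_1(N_\varphi)\ge1$, so $L\cap\varphi_*L\neq0$; pick $a\in L\cap\varphi_*L$ primitive in $H_1(S;\Z)$ (possible, as this intersection is a nonzero saturated sublattice), set $a':=\varphi_*^{-1}a$ (which lies in $L$, as $a\in\varphi_*L$, and is primitive), choose $b'$ with $(a',b')=1$ (possible since the intersection form is unimodular and $a'$ is primitive), and put $b:=\varphi_*b'$, so that $(a,b)=(a',b')=1$. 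Now $(a,b)$ and $(a',b')$ are pairs of the type covered by the second input, so there is $g_0\in\Sp(2g,\Z)_L$ with $g_0a=a'$ and $g_0b=b'$; lifting $g_0^{-1}$ to $\psi_2\in\mathcal{H}_g$ and putting $\psi_1:=\varphi\psi_2^{-1}$, one checks $\psi_{1*}a=\varphi_*g_0a=\varphi_*a'=a$ and likewise $\psi_{1*}b=b$, so $\psi_1\in HS(a,b)$ and $\varphi=\psi_1\psi_2$, as required.

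The main obstacle is the transitivity claim for $\Sp(2g,\Z)_L$, which I would prove by elementary operations. The restriction $\Sp(2g,\Z)_L\to\GL(L)\cong\GL(g,\Z)$ is surjective (the block-diagonal matrices $\mathrm{diag}(A,(A^{-1})^T)$ already realise every $A$), and $\GL(g,\Z)$ is transitive on primitive vectors of $\Z^g$, so we may normalise $a=a'=a_1$. Writing a completion as $b=\sum_i c_i a_i+\sum_j d_j b_j$, the relation $(a_1,b)=1$ forces $d_1=1$; one then moves $b$ to $b_1$ by first applying block-diagonal matrices $\mathrm{diag}(A,(A^{-1})^T)$ with $Ae_1=e_1$ to clear the coordinates $d_2,\dots,d_g$, and then applying transvections $x\mapsto x+(x,v)v$ with $v\in L$ — which fix $L$ pointwise, hence fix $a_1$, and lie in $\Sp(2g,\Z)_L$ — to clear the coordinates $c_1,\dots,c_g$. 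Everything else is formal once Lemma~\ref{lem:heegaard-homology} and the two structural inputs are in hand.
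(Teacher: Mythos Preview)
Your proof is correct and follows essentially the same approach as the paper. The paper is terser: it cites \cite{Hir} for the fact that the image of $\mathcal{H}_g$ in $\Sp(2g,\Z)$ is the stabiliser of $L$ and for the transitivity statements, whereas you supply self-contained arguments for these. Your organisation of part~(b) via transitivity on pairs $(a,b)$ is a repackaging of the paper's two-step argument (first transitivity of $\mathcal{H}_g$ on primitive vectors in $L$, then transitivity of the stabiliser of such a vector on its duals), but the content is the same.
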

\begin{proof} 
  Lemma~\ref{lem:heegaard-homology} shows that the first Betti number
  of $N_\varphi$ is at most $g$. The same simple observation which
  leads to Lemma \ref{lem:heegaard-homology} (see the discussion in
  Section 8 of \cite{DT}) also yields that the conditions in a), b)
  are sufficient for the Betti number bound. We proceed to show necessity.

  \begin{enumerate}[a)]
  \item By Lemma~\ref{lem:heegaard-homology}, if $b_1(N_\varphi)=g$ 
    then $\varphi_* L = L$. In other words, the matrix describing $\varphi_*$ 
    with respect to the symplectic basis introduced above has the form
    \[
    \begin{pmatrix}
      A & B \\ 0 & C
    \end{pmatrix}
    \]
    Now any symplectic matrix of such a form
    is induced by an element of the handlebody group \cite{Hir}. 
    The claim
    follows.
  \item By Lemma~\ref{lem:heegaard-homology}, if $b_1(N_\varphi)\geq 1$ 
    then there is some $0 \neq v \in L \cap \varphi_*L$. Since $\varphi_*$ is an automorphism
    of $H_1(S,\mathbb{Z})$ we may assume that $v$ is \emph{primitive} (which 
    is equivalent to stating that $v$ can be represented by a simple closed curve, see
    \cite[Proposition~6.2]{FM}). 
    Since $\mathcal{H}_g$ acts transitively on the set of primitive
    vectors in $L$, by multiplying $\varphi$ from the right by an element in
    $\mathcal{H}_g$ we may assume that $\varphi_*(v) = v$. Using the description of
    the image of the handlebody group in $\mathrm{Sp}(2g, \mathbb{Z})$ given above
    \cite{Hir}, it follows that the stabiliser in $\mathcal{H}_g$ of an element 
    $v\in L$ acts
    transitively on the set of primitive vectors $w\in H_1(S;\Z)$ with $(v,w) = 1$. 
    The claim follows.
  \end{enumerate}
\end{proof}

Next we discuss how covers of $S$ give rise to covers of $N_\varphi$.
The following easy lemma can also be found in \cite{DT}\footnote{In the terminology of \cite{DT}: a map $\sigma$ induces a cover of $N_\varphi$ if and
only if 
$\sigma$ extends over $V$ and $\varphi\cdot \sigma$ extends over $V$.}. 
\begin{lemma}\label{lem:criterion-covers}
  Let $\sigma:\pi_1(S) \to G$ be a surjection onto a group $G$. Then
  $\sigma$ factors through a map $\pi_1(N_\varphi)\to G$
  if and only if \[K = \ker(\pi_1(S)\to \pi_1(V)) \subset \ker(\sigma)
  \text{ and }\varphi_*K\subset\ker(\sigma).\]
\end{lemma}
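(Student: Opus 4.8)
The plan is to compute $\pi_1(N_\varphi)$ with van Kampen's theorem and then extract the factorisation criterion from the universal property of the resulting amalgamated product. Write $N_\varphi = V_1 \cup_\varphi V_2$ as two copies of the handlebody $V$ glued along $S$, thicken each $V_i$ to an open set so that their intersection is an open collar of $S$, and base everything at a point of $S$. The inclusion $S \hookrightarrow V$ is surjective on fundamental groups with kernel $K$: indeed $V$ deformation retracts onto a wedge of $g$ circles that can be taken to lie on $S$, so $\pi_1(V) \cong \pi_1(S)/K$, where $K$ is the normal closure of the meridians $\alpha_1,\dots,\alpha_g$. Van Kampen's theorem therefore identifies
\[ \pi_1(N_\varphi) \;\cong\; \pi_1(V) \ast_{\pi_1(S)} \pi_1(V), \]
where $\pi_1(S)$ maps to the first copy of $\pi_1(V)$ by $i_*$ and to the second copy by $i_* \circ \varphi_*^{-1}$, the twist by $\varphi$ recording the gluing. (Whether $\varphi_*$ or $\varphi_*^{-1}$ appears here is a matter of the chosen convention for $N_\varphi = V\cup_\varphi V$; we take the one matching the statement.) Under this identification the Heegaard surface inclusion $\iota\colon S \hookrightarrow N_\varphi$ induces $\iota_* = j_1\circ i_*$, where $j_1$ is the inclusion of the first factor.

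Now apply the universal property of the pushout. Giving a homomorphism $\bar\sigma\colon \pi_1(N_\varphi)\to G$ is the same as giving two homomorphisms $f_1,f_2\colon\pi_1(V)\to G$ with $f_1\circ i_* = f_2\circ i_*\circ\varphi_*^{-1}$, and then $\bar\sigma\circ\iota_* = f_1\circ i_*$. Hence $\sigma$ factors through $\iota_*$ if and only if there exist such $f_1,f_2$ with, in addition, $f_1\circ i_* = \sigma$. Since $i_*$ is surjective with kernel $K$, an $f_1$ with $f_1\circ i_* = \sigma$ exists precisely when $K\subset\ker\sigma$, and it is then unique, namely the map induced on $\pi_1(S)/K$. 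Given such $f_1$, the compatibility condition becomes $f_2\circ i_* = \sigma\circ\varphi_*$, and again by surjectivity of $i_*$ such an $f_2$ exists precisely when $K\subset\ker(\sigma\circ\varphi_*) = \varphi_*^{-1}(\ker\sigma)$, i.e.\ when $\varphi_*K\subset\ker\sigma$. Putting the two conditions together yields the claimed equivalence; and since $\sigma$ is onto $G$, the resulting $\bar\sigma$ is onto as well, so the factorisation really does define a connected $G$-cover of $N_\varphi$ restricting over $S$ to the cover classified by $\sigma$.

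The converse implication can alternatively be seen directly and more geometrically: a factorisation $\sigma = \bar\sigma\circ\iota_*$ exhibits $\sigma$ as the restriction to the boundary of a $G$-cover of $V_1$ (forcing $K\subset\ker\sigma$) and, after recording that the surface loops acquire a $\varphi$-twist when viewed inside $V_2$, also of a $G$-cover of $V_2$ (forcing $\varphi_*K\subset\ker\sigma$). This is exactly the content of the footnoted reformulation ``$\sigma$ extends over $V$ and $\varphi\cdot\sigma$ extends over $V$''. There is no real obstacle in this lemma; the only things that need care are getting the $\varphi$ versus $\varphi^{-1}$ bookkeeping in the amalgamation right (pinned down once the meaning of $N_\varphi = V\cup_\varphi V$ is fixed) and invoking the surjectivity of $\pi_1(S)\to\pi_1(V)$, which is what turns the one-sided implications into genuine ``if and only if'' statements and makes the extensions $f_1,f_2$ unique.
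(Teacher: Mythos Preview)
Your proof is correct and follows essentially the same route as the paper. The paper's argument is terser: it records directly (via Seifert--van Kampen) that $\pi_1(N_\varphi)=\pi_1(S)/\langle\langle K,\varphi_*K\rangle\rangle$ and then reads off the factorisation criterion from the universal property of this quotient, whereas you keep the pushout $\pi_1(V)\ast_{\pi_1(S)}\pi_1(V)$ unreduced and apply its universal property to produce the maps $f_1,f_2$; since both legs of the pushout are surjective these two formulations are interchangeable.
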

\begin{proof}
  This is an immediate consequence of the fact that
  \[ \pi_1(N_\varphi) = \pi_1(S) / \langle\langle K, \varphi_*K \rangle\rangle \]
 where $\langle\langle K,\varphi_*K\rangle\rangle$ denotes the normal closure  
 of the subgroup of $\pi_1(S)$ generated by $K,\varphi_*K$.
 The former statement can e.g. be derived from the 
theorem of Seifert--van Kampen.
\end{proof}
In particular, we have the following.
\begin{corollary}\label{cor:covers-for-torelli-gluings}
  Let $\sigma:\pi_1(S)\to G$ be a surjection onto 
  an Abelian group $G$ so that $K = \ker(\pi_1(S)\to \pi_1(V)) \subset \ker(\sigma)$.
  Denote by $S'$ the cover of $S$ defined by $\sigma$. 
  Let $\varphi\in\Ig$ be arbitrary. Then:
  \begin{enumerate}[i)]
  \item $S' = \partial V'$ for a cover $V'$ of $V$,
    and the action of the deck group $G$ on $S'$ extends to the
    action of the deck group of $V'\to V$.
  \item $\sigma$ factors through a map $\sigma_\varphi:\pi_1(N_\varphi)\to G$.
  \item The cover $\widetilde{N_\varphi}\to N_\varphi$ defined by $\sigma_\varphi$ is homeomorphic
    to $N_{\widetilde{\varphi}}$, where $\widetilde{\varphi}$ is any lift of
    $\varphi$ to $S'$.
  \end{enumerate}
  The same remains true if $\varphi\in HS(a,b)$, assuming that $\sigma:\pi_1(S)\to G$ is
  defined by algebraic intersection number (possibly mod $q>0$) with $a$.
\end{corollary}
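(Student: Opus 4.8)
The plan is to derive all three statements from Lemma~\ref{lem:criterion-covers}, the only essential input being the observation that the hypotheses on $\varphi$ force $\sigma\circ\varphi_*=\sigma$ as homomorphisms $\pi_1(S)\to G$ (this composition is well defined since $G$ is abelian). For statement (i): as $K=\ker(\pi_1(S)\to\pi_1(V))$ is normal and contained in $\ker(\sigma)$, the surjection $\sigma$ factors as $\pi_1(S)\to\pi_1(V)\xrightarrow{\bar\sigma}G$ with $\bar\sigma$ surjective. Let $p\colon V'\to V$ be the regular cover corresponding to the normal subgroup $\ker(\bar\sigma)$ of $\pi_1(V)$; its deck group is $\pi_1(V)/\ker(\bar\sigma)\cong G$. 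Since $S=\partial V$ is connected and $\sigma$ is onto $G$, the preimage $\partial V'=p^{-1}(S)$ is connected, hence it is precisely the regular $G$--cover of $S$ determined by $\sigma$, that is $S'$; moreover the deck action of $G$ on $V'$ restricts on $\partial V'=S'$ to the deck action of the cover $S'\to S$. This proves (i).

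Next I record the key point: $\sigma\circ\varphi_*=\sigma$. Since $G$ is abelian, $\sigma$ factors through $H_1(S;\Z)$. When $\varphi\in\Ig$, the induced map $\varphi_*$ on $H_1(S;\Z)$ is the identity and the claim is immediate. When instead $\varphi\in HS(a,b)$ and $\sigma$ is algebraic intersection number with $a$ (possibly reduced mod $q$), the claim follows because $\varphi_*$ preserves the intersection pairing and fixes $a$, so $(\varphi_*x,a)=(\varphi_*x,\varphi_*a)=(x,a)$ for all $x\in H_1(S;\Z)$; note that in this case the standing hypothesis $K\subset\ker(\sigma)$ forces $a\in L$ (since $K$ maps onto $L$ in homology and $L$ is Lagrangian), which we therefore assume. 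In either case $\varphi_*$ preserves $\ker(\sigma)$, so $\varphi$ lifts to a homeomorphism of $S'$; fix one such lift and call it $\widetilde\varphi$. Statement (ii) is now immediate from Lemma~\ref{lem:criterion-covers}: one must verify $K\subset\ker(\sigma)$, which is the hypothesis, and $\varphi_*K\subset\ker(\sigma)$, which holds because $\sigma(\varphi_*k)=\sigma(k)=1$ for every $k\in K$.

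For statement (iii), write $N_\varphi=V_0\cup_\varphi V_1$ and let $\widetilde{N_\varphi}\to N_\varphi$ be the cover defined by $\sigma_\varphi$. The composite $\pi_1(V_0)\to\pi_1(N_\varphi)\xrightarrow{\sigma_\varphi}G$ restricts on $\pi_1(\partial V_0)=\pi_1(S)$ to $\sigma$, hence coincides with $\bar\sigma$; because of the twist by $\varphi$ in the gluing, the composite $\pi_1(V_1)\to G$ restricts on $\pi_1(\partial V_1)\cong\pi_1(S)$ to $\sigma$ composed with $\varphi_*$ or $\varphi_*^{-1}$ (depending on orientation conventions), which again equals $\sigma$ by the identity above, so it too coincides with $\bar\sigma$. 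As $\bar\sigma$ is onto, the preimages of $V_0$ and of $V_1$ in $\widetilde{N_\varphi}$ are connected and, as covers of the pair $(V,\partial V)$, each isomorphic to $(V',\partial V')=(V',S')$ from (i); they are glued along $S'$ by a lift of $\varphi$, which up to composition with a deck transformation of $S'$ is $\widetilde\varphi$. Since by (i) deck transformations of $S'$ extend over $V'$, the glued manifold does not depend on the chosen lift, whence $\widetilde{N_\varphi}\cong V'\cup_{\widetilde\varphi}V'$. Finally $V'$, being a finite cover of a handlebody, is again a handlebody, so $V'\cup_{\widetilde\varphi}V'=N_{\widetilde\varphi}$, as claimed.

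The only place the hypotheses on $\varphi$ enter is the one-line identity $\sigma\circ\varphi_*=\sigma$; everything else is routine covering--space theory. The step that needs the most care is the bookkeeping in (iii): correctly accounting for the twist by $\varphi$ on the second handlebody (so that the twisted composite $\pi_1(V_1)\to G$ is again $\bar\sigma$, which is exactly where $\varphi\in\Ig$, respectively the intersection--number form of $\sigma$, is used) and checking that the resulting manifold is independent of the choice of lift $\widetilde\varphi$.
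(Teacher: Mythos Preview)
Your proof is correct and follows essentially the same approach as the paper's, though you supply considerably more detail for part~(iii), where the paper merely asserts that lifts commuting with the deck group action yields the conclusion. One minor slip: you write that $V'$ is a \emph{finite} cover of a handlebody, but the corollary allows $G$ infinite (e.g.\ $G=\Z$, used later for $S_\infty$); this does not affect the argument, since $N_{\widetilde\varphi}$ should simply be read as $V'\cup_{\widetilde\varphi}V'$ regardless of whether $V'$ is compact.
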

\begin{proof}
To show the first assertion, 
  suppose that $K\subset\ker(\sigma)$ and let 
  $V' \to V$ be the cover of the handlebody $V$ 
  whose fundamental group is 
  the image
  $\ker(\sigma) / K$ in $\pi_1(V)$ 
of the subgroup $\ker(\sigma)<\pi_1(S)$.
  Note first that
  \[ G = \pi_1(S) / \ker(\sigma) = \left(\pi_1(S) /
    K\right) / \left(\ker(\sigma) / K\right) \]
 and hence $V' \to V$ is a regular cover
  with deck group $G$. The induced cover $\partial V' \to \partial V = S$ has
  fundamental group exactly $\ker(\sigma)$, and therefore it is equal to $S'\to S$.

The second statement is immediate from Lemma  
\ref{lem:criterion-covers} and the fact that $\sigma$ factors
through a homomorphism $H_1(S,\mathbb{Z})\to G$. In particular,
any element of $\Ig$ lifts to $V^\prime$, and this lift commutes
with the action of $G$ which implies the third statement. Under the extra assumption 
given at the end, the same is true for $\varphi\in HS(a,b)$.
\end{proof}

We call a cover $S' \to S$ as in Corollary~\ref{cor:covers-for-torelli-gluings}
a \emph{$K$--cover}. For any $K$--cover, by part i) of that corollary, there 
is a subspace
\[ L' = \ker(H_1(S';\Z) \to H_1(V';\Z)) \]
and thus, by Lemma~\ref{lem:heegaard-homology}, we obtain

\begin{proposition}\label{prop:describe-cover-homology}
  With notation as above, we have 
  \[ H_1(\widetilde{N_\varphi}; \Z) = H_1(N_{\widetilde{\varphi}}; \Z) = H_1(S';\Z) \left/ \left\langle L', \widetilde{\varphi}_*L'\right\rangle\right. \]
where $\widetilde{\varphi}$ is any lift of $\varphi$ to $S'$.
\end{proposition}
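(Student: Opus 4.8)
The plan is to read off both equalities from results already in place. For the first one I would invoke Corollary~\ref{cor:covers-for-torelli-gluings}: since $\sigma\colon\pi_1(S)\to G$ kills $K=\ker(\pi_1(S)\to\pi_1(V))$ and factors through $H_1(S;\Z)$, part ii) of that corollary produces a factorisation $\sigma_\varphi\colon\pi_1(N_\varphi)\to G$, and part iii) identifies the cover $\widetilde{N_\varphi}$ it defines with $N_{\widetilde\varphi}$, where $\widetilde\varphi$ is any lift of $\varphi$ to $S'$. Such a lift exists because $\varphi\in\Ig$ (or $\varphi\in HS(a,b)$ in the situation of the last sentence of that corollary) preserves $\ker\sigma$, and a lift of a homeomorphism along a covering is again a homeomorphism, hence defines a mapping class of $S'$. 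In particular $H_1(\widetilde{N_\varphi};\Z)\cong H_1(N_{\widetilde\varphi};\Z)$, which is the first equality.

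For the second equality I would apply Lemma~\ref{lem:heegaard-homology} to the Heegaard splitting $N_{\widetilde\varphi}=V'\cup_{\widetilde\varphi}V'$. By part i) of Corollary~\ref{cor:covers-for-torelli-gluings}, $V'$ is a cover of $V$ with $\partial V'=S'$, so the triple $(S',V',\widetilde\varphi)$ is exactly an instance of the setup of Section~\ref{sec:setup}, with the Lagrangian $L'=\ker(H_1(S';\Z)\to H_1(V';\Z))$ being precisely the subspace introduced just before the proposition. Lemma~\ref{lem:heegaard-homology} then reads $H_1(N_{\widetilde\varphi};\Z)=H_1(S';\Z)/\langle L',\widetilde\varphi_*L'\rangle$, and combining with the previous paragraph proves the proposition.

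The one point that deserves a sentence is that the right-hand side does not depend on the choice of lift, since the statement quantifies over \emph{any} lift. Two lifts of $\varphi$ differ by a deck transformation $d$ of $S'\to S$; by part i) of the corollary $d$ extends over $V'$, so $d_*$ is an automorphism of $H_1(S';\Z)$ with $d_*L'=L'$, and replacing $\widetilde\varphi$ by $d\circ\widetilde\varphi$ replaces $\langle L',\widetilde\varphi_*L'\rangle$ by $d_*\langle L',\widetilde\varphi_*L'\rangle$, so that $d_*^{-1}$ gives a canonical isomorphism of the two quotients. I do not expect a genuine obstacle here: the substantive work has already been carried out in Corollary~\ref{cor:covers-for-torelli-gluings} and Lemma~\ref{lem:heegaard-homology}, and this proposition is essentially their formal combination; the proof above is a matter of bookkeeping.
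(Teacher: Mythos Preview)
Your proposal is correct and follows exactly the paper's approach: the paper states the proposition as an immediate consequence of Lemma~\ref{lem:heegaard-homology} applied to the cover $V'$ furnished by Corollary~\ref{cor:covers-for-torelli-gluings}, without writing out a proof. Your additional remark on independence of the choice of lift is a welcome clarification the paper leaves implicit.
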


We will also need the following version 
which is useful to compare Betti numbers. 
To this end, define
\[ E = \ker( H_1(S';\Z) \to H_1(S;\Z) ) \]
and let $L'_E = L' \cap E$. By transfer, we have
\[  H_1(S';\Q) = H_1(S;\Q) \oplus (E\otimes \Q). \]
Furthermore, $E$ and this decomposition is preserved by $\widetilde{\varphi}_*$.
This, together with Proposition~\ref{prop:describe-cover-homology} yields the following
useful characterisation.
\begin{proposition}\label{prop:describe-cover-homology-nontriv}
  With notations as above, we have 
  \[ H_1(\widetilde{N_\varphi}; \Q) = H_1(N_\varphi; \Q) \oplus 
\left( E\otimes\Q \left/ \left\langle L'_E\otimes\Q, 
\widetilde{\varphi}_*L_E'\otimes\Q\right\rangle\right. \right) \]
  In particular, $b_1(\widetilde{N_\varphi}) > b_1(N_\varphi)$ if and only if
  \[ L'_E + \widetilde{\varphi}_*L_E' \subsetneq E \]
  is not a lattice.
\end{proposition}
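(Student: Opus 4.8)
The plan is to deduce everything from the rational form of Proposition~\ref{prop:describe-cover-homology}. Tensoring with $\Q$ (which is flat, so it commutes with forming the span $\langle L',\widetilde\varphi_*L'\rangle$) gives
\[ H_1(\widetilde{N_\varphi};\Q) \;=\; H_1(S';\Q)\,\big/\,\big\langle L'\otimes\Q,\ \widetilde\varphi_*L'\otimes\Q\big\rangle. \]
I would show that this denominator is a direct sum along the transfer decomposition $H_1(S';\Q)=H_1(S;\Q)\oplus(E\otimes\Q)$, with $H_1(S;\Q)$--part equal to $L\otimes\Q$ and $E\otimes\Q$--part equal to $\langle L'_E\otimes\Q,\ \widetilde\varphi_*L'_E\otimes\Q\rangle$. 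The quotient then splits as asserted, and the Betti number statement follows at once: the second summand is nonzero exactly when $(L'_E+\widetilde\varphi_*L'_E)\otimes\Q\subsetneq E\otimes\Q$, i.e. exactly when the finitely generated subgroup $L'_E+\widetilde\varphi_*L'_E$ of the finitely generated free abelian group $E$ fails to span $E\otimes\Q$, i.e. has infinite index, i.e. is not a lattice in $E$.

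The first step is to record that $L'\otimes\Q$ respects the decomposition. Recall that in the transfer splitting the summand $H_1(S;\Q)$ is the subspace $H_1(S';\Q)^G$ of invariants of the deck group $G$ of $S'\to S$, with $p_*\colon H_1(S';\Q)^G\to H_1(S;\Q)$ an isomorphism and $E\otimes\Q=\ker p_*$ the complementary subspace. By part i) of Corollary~\ref{cor:covers-for-torelli-gluings} the $G$--action on $S'$ extends over $V'$, so the map $H_1(S';\Z)\to H_1(V';\Z)$ is $G$--equivariant and hence $L'$, and a fortiori $L'\otimes\Q$, is $G$--invariant. A $G$--invariant subspace is the direct sum of its intersections with $H_1(S';\Q)^G$ and with $E\otimes\Q$, so $L'\otimes\Q=\big(L'\otimes\Q\cap H_1(S;\Q)\big)\oplus(L'_E\otimes\Q)$. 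Moreover $p_*(L'\otimes\Q)=L\otimes\Q$: by construction of $V'$ the square of inclusions and coverings $S'\hookrightarrow V'$, $S\hookrightarrow V$ is a pullback of covering spaces, so both the pushforward and the transfer are natural along it; naturality of the pushforward sends $L'=\ker(H_1(S')\to H_1(V'))$ into $L=\ker(H_1(S)\to H_1(V))$, and naturality of the transfer $\tau_S$ sends $L\otimes\Q$ into $L'\otimes\Q$, whence $L\otimes\Q=p_*\tau_S(L\otimes\Q)\subseteq p_*(L'\otimes\Q)$. Since $p_*$ kills $E\otimes\Q$ and is an isomorphism on $H_1(S';\Q)^G$, this says precisely that $L'\otimes\Q\cap H_1(S;\Q)$ corresponds to $L\otimes\Q$.

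The second step is to apply $\widetilde\varphi_*$. As recalled just before the Proposition, $\widetilde\varphi_*$ preserves $E\otimes\Q$ and the decomposition; and since $\varphi\in\Ig$ acts trivially on $H_1(S;\Q)$ while $p_*\widetilde\varphi_*=\varphi_*p_*$, the restriction of $\widetilde\varphi_*$ to the summand $H_1(S;\Q)=H_1(S';\Q)^G$ is the identity. Hence $\widetilde\varphi_*$ fixes $L'\otimes\Q\cap H_1(S;\Q)$ pointwise and carries $L'_E\otimes\Q$ into $E\otimes\Q$, so
\[ \big\langle L'\otimes\Q,\ \widetilde\varphi_*L'\otimes\Q\big\rangle \;=\; \big(L'\otimes\Q\cap H_1(S;\Q)\big)\ \oplus\ \big\langle L'_E\otimes\Q,\ \widetilde\varphi_*L'_E\otimes\Q\big\rangle, \]
the sum being direct because the first summand lies in the $H_1(S;\Q)$--factor and the second in the $E\otimes\Q$--factor. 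Dividing $H_1(S';\Q)=H_1(S;\Q)\oplus(E\otimes\Q)$ by this subspace, and using that $L'\otimes\Q\cap H_1(S;\Q)$ corresponds to $L\otimes\Q$, gives
\[ H_1(\widetilde{N_\varphi};\Q) \;=\; \frac{H_1(S;\Q)}{L\otimes\Q}\ \oplus\ \frac{E\otimes\Q}{\big\langle L'_E\otimes\Q,\ \widetilde\varphi_*L'_E\otimes\Q\big\rangle}. \]
Since $\varphi\in\Ig$ satisfies $\varphi_*L=L$, Lemma~\ref{lem:heegaard-homology} identifies the first summand with $H_1(N_\varphi;\Q)$, which is the stated formula; comparing first Betti numbers gives the displayed criterion.

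The bulk of this is routine bookkeeping with standard transfer facts — the invariant/isotypic splitting of the rational homology of a regular cover and its naturality for the coverings of the handlebodies — which I would not spell out in detail. The one genuinely load-bearing point, and the place I expect to have to be careful, is that both $L'\otimes\Q$ and $\widetilde\varphi_*(L'\otimes\Q)$ split along the transfer decomposition \emph{with the same $H_1(S;\Q)$--part}: this combines the $G$--invariance of $L'$ coming from Corollary~\ref{cor:covers-for-torelli-gluings}~i) with the fact that $\widetilde\varphi_*$ acts as the identity on $H_1(S';\Q)^G$ because $\varphi$ lies in the Torelli group. I would also include a short remark that the right-hand side is independent of the choice of lift $\widetilde\varphi$: two lifts differ by some $g\in G$, and since $L'_E\otimes\Q$ is $G$--invariant one has $g_*\widetilde\varphi_*(L'_E\otimes\Q)=\widetilde\varphi_*\big((\widetilde\varphi^{-1}g\widetilde\varphi)_*(L'_E\otimes\Q)\big)=\widetilde\varphi_*(L'_E\otimes\Q)$.
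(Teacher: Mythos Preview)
Your argument is correct and is precisely the expansion of the paper's one-line justification (``This, together with Proposition~\ref{prop:describe-cover-homology} yields\ldots''): you tensor Proposition~\ref{prop:describe-cover-homology} with $\Q$, split everything along the transfer decomposition using $G$--invariance of $L'$ (from Corollary~\ref{cor:covers-for-torelli-gluings}~i)) and the fact that $\widetilde\varphi_*$ commutes with $G$, and identify the trivial-isotypic piece with $H_1(N_\varphi;\Q)$ via Lemma~\ref{lem:heegaard-homology}. The only remark worth adding is that you have written the argument for $\varphi\in\Ig$ (where $\widetilde\varphi_*$ is the identity on $H_1(S';\Q)^G$); in the $HS(a,b)$ variant the restriction of $\widetilde\varphi_*$ to $H_1(S';\Q)^G\cong H_1(S;\Q)$ is $\varphi_*$ rather than the identity, but then the $H_1(S;\Q)$--piece of the denominator becomes $\langle L\otimes\Q,\varphi_*L\otimes\Q\rangle$ and the identification with $H_1(N_\varphi;\Q)$ still goes through directly from Lemma~\ref{lem:heegaard-homology}.
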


In the sequel, two special kinds of $K$--covers will be particularly important.
Namely, given any primitive vector $a \in L$, the kernel of the map which 
associates to an element $\alpha\in \pi_1(S)$ the algebraic intersection 
number with $a$ of the homology class defined by $\alpha$ 
(resp. its algebraic intersection number with $a$ mod $q$
for $q\in\mathbb{N}$) defines an infinite
cyclic cover $S_\infty \to S$ (resp. a cyclic cover $S_q \to S$ of order $q$). 
Informally, we call such a cover the cover defined by algebraic intersection
number with $a$.

By Lemma~\ref{lem:betti-number-criterion}, if $N_\varphi$ has Betti number at least
$1$, then $a$ may be chosen such that these covers induce covers of the
$3$--manifold $N_\varphi$. In the sequel we always do so.

\section{Cyclic Covers}
\label{sec:matrix}

Recall from Section \ref{sec:setup} the choice 
$a_1,\dots,a_g,b_1,\dots,b_g$ of a symplectic basis of 
$H_1(S,\mathbb{Z})$.  The classes $a_1,\ldots, a_g$ generate the kernel $L$ of
the map $H_1(S;\Z) \to H_1(V;\Z)$ induced by the inclusion $S = \partial V \to V$.
Let $S_\infty\to S$ be the infinite cyclic $K$-cover of $S$ defined by
algebraic intersection number with $a_g$. There is a corresponding
infinite cyclic cover $V_\infty \to V$. 
Since linear functionals on $H_1(S,\mathbb{Z})$ defined by algebraic 
intersection with non-separating simple closed curves generate $H^1(S;\Z)$ and the
mapping class group acts transitively on such curves, the results in this section in
fact hold true for any cyclic $K$--cover $S_\infty\to S$. We will restrict to the case of
intersection with $a_g$ for clarity.

\subsection{Homology of the Infinite Cyclic Cover}
The goal of this subsection is to 
give a fairly explicit (but non-canonical) description
of the first homology of $S_\infty$ as a module over the
group ring $\mathbb{Z}[\mathbb{Z}]$ of the deck group 
$\mathbb{Z}$ of $S_\infty$.

Denote by $Y \subset S$ the complementary subsurface of the simple closed
curve $\alpha_g$ on $S$. Choose 
a preferred lift $\widetilde{Y}$ of $Y$ to $S_\infty$, 
i.e. $\tilde Y\subset S_\infty$
is a connected subsurface with boundary which is mapped by the covering map 
$S_\infty\to S$ homeomorphically onto $Y$.

For $i,j\leq g-1$ the curves $\alpha_i,\beta_j$ (defining the homology classes
$a_i,b_j$) admit
unique lifts
\[\widetilde{\alpha}_1,\ldots,\widetilde{\alpha}_{g-1},\widetilde{\beta}_1,\ldots,\widetilde{\beta}_{g-1}\]
to $\widetilde{Y}$. Let $\widetilde{\alpha}_g$ be the lift of
$\alpha_g$ contained in the closure of $\widetilde{Y}$ whose
orientation agrees with the boundary orientation of
$\widetilde{Y}$. Note that this makes sense since the orientation of
$S$ induces an orientation of $\tilde Y$ and since $\alpha_g$ is an
oriented curve\footnote{This just serves to fix a specific lift, we
  could take any.}. Denote by $\widetilde{a}_i, \widetilde{b}_j$
$(1\leq i\leq g,1\leq j\leq g-1)$ the homology classes in
$H_1(S_\infty;\Z)$ of the curves $\widetilde
\alpha_i,\widetilde\beta_j$. We have an isomorphism $H_1(Y;\Z) \to
H_1(\widetilde{Y};\Z)$ which sends $a_i, b_j$ to $\widetilde{a}_i, \widetilde{b}_j$.

As usual, we denote by $\mathbb{Z}[G]$ 
the integral group ring of a group $G$.
Then $\mathbb{Z}[\mathbb{Z}]$ has an obvious
identification with the ring $\mathbb{Z}[t,t^{-1}]$ of integral
Laurent polynomials. If $H$ is a $\Z$--module, we write $H[t, t^{-1}]$ to mean
$H \otimes_\Z \ZZ$.
We also choose $\tau$ a generator of the deck group of $S_\infty$.

\begin{lemma}\label{lem:generators-h1-sinfty}
  The map
  \[ H_1(Y;\Z)[t, t^{-1}] \oplus \mathbb{Z} \cong \mathbb{Z}[\mathbb{Z}]^{2g-2} \oplus \mathbb{Z} \to H_1(S_\infty; \mathbb{Z}) \]
  induced by sending $a_i$ to $\widetilde{a}_i$, $b_i$ to $\widetilde{b}_i$ for $i<g$
  and the generator of the trivial $\mathbb{Z}[\mathbb{Z}]$--module $\mathbb{Z}$ to $\widetilde{a}_g$ 
  is an isomorphism of $\mathbb{Z}[\mathbb{Z}]$--modules.

  Furthermore, the image of $L[t, t^{-1}] \oplus
  \mathbb{Z}$ under this map is exactly \[\ker(H_1(S_\infty;\Z) \to H_1(V_\infty;\Z)).\]
\end{lemma}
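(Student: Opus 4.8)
\smallskip\noindent\textbf{Proof plan for the final statement.}
I take the isomorphism of the first part of the lemma as given. Write $\Lambda\subseteq H_1(S_\infty;\Z)$ for the image of $L[t,t^{-1}]\oplus\Z$, so that, unwinding the first part, $\Lambda=\Z[\Z]\langle\widetilde a_1,\dots,\widetilde a_{g-1}\rangle\oplus\Z\,\widetilde a_g$ (the last summand being only a copy of $\Z$, since $\tau_*\widetilde a_g=\widetilde a_g$). Again by the first part, $\Lambda$ is a direct summand of $H_1(S_\infty;\Z)$ with complement $C\coloneq\Z[\Z]\langle\widetilde b_1,\dots,\widetilde b_{g-1}\rangle\cong\Z[\Z]^{g-1}$. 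Set $L_\infty\coloneq\ker\bigl(H_1(S_\infty;\Z)\to H_1(V_\infty;\Z)\bigr)$, a $\Z[\Z]$--submodule because the inclusion $S_\infty=\partial V_\infty\hookrightarrow V_\infty$ is deck--equivariant. The goal is $\Lambda=L_\infty$, and I would prove the two inclusions separately.

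\emph{The inclusion $\Lambda\subseteq L_\infty$.} Each $\alpha_i$ ($1\le i\le g$) is a meridian of $V$, hence bounds an embedded disk $D_i\subset V$; as $D_i$ is simply connected, $V_\infty\to V$ is trivial over $D_i$, so every lift of $\alpha_i$ to $S_\infty\subset V_\infty$ bounds a lift of $D_i$ inside $V_\infty$. For $i<g$ the lifts of $\alpha_i$ to $S_\infty$ are precisely the curves $\tau^n\widetilde\alpha_i$, and $\widetilde\alpha_g$ is one of the lifts of $\alpha_g$; therefore all of the classes $\tau^n\widetilde a_i$ ($i<g$, $n\in\Z$) and $\widetilde a_g$ die in $H_1(V_\infty;\Z)$. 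These classes generate $\Lambda$ over $\Z[\Z]$, so $\Lambda\subseteq L_\infty$.

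\emph{The module $H_1(V_\infty;\Z)$ and surjectivity onto it.} The cover $V_\infty\to V$ is induced by the functional on $H_1(V;\Z)$ coming from algebraic intersection with $a_g$; in the basis $b_1,\dots,b_g$ of $H_1(V;\Z)$ this is, up to a sign absorbed in the choice of $\tau$, the coordinate dual to $b_g$. Choosing a spine of $V$ that is a wedge of $g$ circles representing $b_1,\dots,b_g$, the pulled--back cell structure on $V_\infty$ has cellular chain complex $\Z[\Z]^g\xrightarrow{(0,\dots,0,\,t-1)}\Z[\Z]$; since $t-1$ is a non--zerodivisor in the integral domain $\Z[\Z]$, this gives $H_1(V_\infty;\Z)\cong\Z[\Z]^{g-1}$ (and $H_0(V_\infty;\Z)=\Z$, as it must be). Moreover $H_1(S_\infty;\Z)\to H_1(V_\infty;\Z)$ is surjective: $\pi_1(\partial V)\to\pi_1(V)$ is onto for a handlebody and is compatible with the maps to $\Z$ defining the two covers, so it restricts to a surjection $\pi_1(S_\infty)\twoheadrightarrow\pi_1(V_\infty)$ of the respective kernels, hence is onto on $H_1$.

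\emph{Conclusion, and the main difficulty.} The map $H_1(S_\infty;\Z)\to H_1(V_\infty;\Z)$ annihilates $\Lambda\subseteq L_\infty$ and is onto, so its restriction to $C$ is a surjection $C\cong\Z[\Z]^{g-1}\twoheadrightarrow H_1(V_\infty;\Z)\cong\Z[\Z]^{g-1}$ with kernel $L_\infty\cap C$; under the identifications this is a surjective endomorphism of a finitely generated module over the commutative ring $\Z[\Z]$, hence an isomorphism. Thus $L_\infty\cap C=0$, and since $H_1(S_\infty;\Z)=\Lambda\oplus C$ with $\Lambda\subseteq L_\infty$ we get $L_\infty=\Lambda\oplus(L_\infty\cap C)=\Lambda$, as required. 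The only genuinely technical point is the third step --- fixing a CW model of $V$ adapted to the basis $b_1,\dots,b_g$, reading off the $(t-1)$--twisted boundary operator, and checking that the $\pi_1$--surjectivity of $\partial V\hookrightarrow V$ descends to the infinite cyclic covers. One can sidestep part of this by choosing a basepoint in $\widetilde Y$ and a spine of $V$ through the corresponding point of $S$ with $\beta_1,\dots,\beta_{g-1}$ among its defining loops; then the map $H_1(S_\infty;\Z)\to H_1(V_\infty;\Z)$ is explicitly $\widetilde a_i\mapsto 0$, $\widetilde a_g\mapsto 0$, $\widetilde b_j\mapsto$ a free $\Z[\Z]$--basis element, making both the surjectivity and the kernel computation immediate.
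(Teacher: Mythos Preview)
Your argument for the second assertion is correct but takes a different route from the paper's. The paper dispatches this in a single line: the meridians $\alpha_1,\ldots,\alpha_g$ normally generate $K=\ker(\pi_1(S)\to\pi_1(V))$, and since $K$ is also the kernel of the surjection $\pi_1(S_\infty)\to\pi_1(V_\infty)$, the image of $K$ in $H_1(S_\infty;\Z)$---which is exactly $L_\infty$---is spanned over $\Z$ by the homology classes of all conjugates of the $\alpha_i$, i.e.\ by all deck-translates $\tau^n\widetilde a_i$; this gives $L_\infty=\Lambda$ immediately. Your approach instead shows $\Lambda\subseteq L_\infty$ by lifting bounding disks, computes $H_1(V_\infty;\Z)\cong\Z[\Z]^{g-1}$ from a cellular model of the spine of $V$, verifies surjectivity of $H_1(S_\infty)\to H_1(V_\infty)$ via $\pi_1$, and then invokes the fact that a surjection between finitely generated free modules of equal rank over a commutative ring is an isomorphism to force $L_\infty\cap C=0$. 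This is longer but more explicit: it exhibits $H_1(V_\infty;\Z)$ concretely and sidesteps the (somewhat implicit) translation from normal generation in $\pi_1(S)$ to $\Z[\Z]$-generation in $H_1(S_\infty;\Z)$. The paper's one-liner is slicker once one observes that conjugating a meridian by $\gamma\in\pi_1(S)$ and reading off its class in $H_1(S_\infty)$ produces precisely the deck-translate $\tau^{\sigma(\gamma)}\widetilde a_i$; your argument trades that observation for a small amount of commutative algebra.
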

\begin{proof}
  Recall that $\widetilde{Y}$ is homeomorphic to $Y$, and that
  the subsurfaces $\tau^n\widetilde{Y}$ are pairwise disjoint
  ($n\in \mathbb{Z})$. The integral homology of $Y$ equals 
	$\mathbb{Z}^{2g-1}=F\oplus \mathbb{Z}\tilde a_g$ where the free $\mathbb{Z}$-module
	$F$ of rank $2g-2$ is spanned by the classes $\tilde a_i,\tilde b_j$ $(1\leq i,j\leq g-1)$.
	
	The closures of $\tau^n\widetilde{Y}$ and
  $\tau^{n+1}\widetilde{Y}$ intersect in the lift
  $\tau^n\widetilde{\alpha_g}$ of $\alpha_g$. 
	Enlarging slightly the set $\widetilde{Y}$ to a neighborhood of its closure
	allows to apply the Mayer Vietoris sequence to $\cup_i\tau^i\widetilde{Y}$ to calculate 
	the homology of $S_\infty$. We find that 
	\[H_1(S_\infty;\mathbb{Z})=\Pi(\oplus_{i\in \mathbb{Z}} H_1(\tau^i \widetilde{Y};\mathbb{Z}))\]
	where the map $\Pi$ is the identity on $\oplus_i\tau^iF$ and identifies 
	$\tau^i\tilde a_g$ with $\tau^{i+1}\tilde a_g$.
	
	Now observe that the deck group $\mathbb{Z}$ acts on both sides of this equation, and that the
	map $\Pi$ is equivariant for this action. Thus $H_1(S_\infty;\mathbb{Z})=
	\mathbb{Z}[\mathbb{Z}]^{2g-2}\oplus (\mathbb{Z}[\mathbb{Z}]/\ker(\epsilon))$
	where $\epsilon:\mathbb{Z}[\mathbb{Z}]\to \mathbb{Z}$ is the $\mathbb{Z}$-linear
	map defined by $\epsilon(g)=1$ for all $g\in \mathbb{Z}$. This shows the first assertion
	of the lemma.

  The second assertion follows from the fact that the $\alpha_i$ normally
  generate the kernel of $\pi_1(S)\to\pi_1(V)$.
\end{proof}

We define
\[ F_\infty = \mathrm{span}_{\mathbb{Z}[\mathbb{Z}]}(\widetilde{a}_1,\ldots,\widetilde{a}_{g-1},
 \widetilde{b}_{1},\ldots,\widetilde{b}_{g-1}) \] 
and note that it is a free $\mathbb{Z}[\mathbb{Z}]$-submodule
of $H_1(S_\infty;\Z)$.
We will frequently use the decomposition
\begin{equation}\label{finf} F_\infty = C_\infty \oplus D_\infty\end{equation} 
\[D_\infty = \mathrm{span}_{\mathbb{Z}[\mathbb{Z}]}\{\widetilde{a}_i, i<g\}, \quad C_\infty =
\mathrm{span}_{\mathbb{Z}[\mathbb{Z}]}\{\widetilde{b}_i, i<g\}. \] 
By the above discussion, both $C_\infty$ and $D_\infty$ are free
$\mathbb{Z}[\mathbb{Z}]$-modules of rank $g-1$.

\subsection{Matrices}
\begin{lemma}\label{lem:torelli-preserves-free}
  Suppose $\varphi\in HS(a_g,b_g)$.
  Then $\varphi$ lifts to $S_\infty$, the lift commutes with the deck group
  action,
  and the induced map $\widetilde{\varphi}_*$ on homology preserves $F_\infty$.
\end{lemma}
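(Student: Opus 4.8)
The plan is to derive all three assertions from the single fact that $\varphi$ fixes the cohomology class classifying $S_\infty\to S$, combined with the explicit model for $H_1(S_\infty;\Z)$ in Lemma~\ref{lem:generators-h1-sinfty}. Write $\mu\in H^1(S;\Z)$ for the class $\mu(x)=(x,a_g)$; by definition $S_\infty\to S$ is the infinite cyclic cover classified by $\mu$, and under the identification of the deck group with $\im(\mu)=\Z$ we may take $\tau\leftrightarrow 1$. Since $\varphi_*$ preserves the intersection form and $\varphi_*a_g=a_g$, we get $(\varphi^*\mu)(x)=(\varphi_*x,a_g)=(x,\varphi_*^{-1}a_g)=(x,a_g)$, i.e.\ $\varphi^*\mu=\mu$. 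Hence $\varphi$ lifts to a homeomorphism $\widetilde\varphi$ of $S_\infty$, and for any such lift the conjugation action $\sigma\mapsto\widetilde\varphi\sigma\widetilde\varphi^{-1}$ on the deck group is the automorphism of $\im(\mu)=\Z$ induced by $\varphi_*$, which is the identity since $\varphi^*\mu=\mu$. This gives the first two assertions, and in particular $\widetilde\varphi_*$ is $\ZZ$-linear.

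For the last assertion, let $p\colon S_\infty\to S$ denote the covering map. On the generators in Lemma~\ref{lem:generators-h1-sinfty} one has $p_*\widetilde a_i=a_i$ and $p_*\widetilde b_j=b_j$ for $i,j<g$, $p_*\widetilde a_g=a_g$, and $p_*\tau_*=p_*$; hence $p_*(F_\infty)=\Span_\Z\{a_1,\dots,a_{g-1},b_1,\dots,b_{g-1}\}$, which is symplectically orthogonal to $b_g$. Writing $z\in H_1(S_\infty;\Z)$ as $z=z_F+c\,\widetilde a_g$ according to the decomposition $H_1(S_\infty;\Z)=F_\infty\oplus\Z\widetilde a_g$ and using $(a_g,b_g)=1$, we obtain $c=(p_*z,b_g)$; thus $z\in F_\infty$ if and only if $(p_*z,b_g)=0$. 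Now for $x\in F_\infty$,
\[(p_*\widetilde\varphi_*x,b_g)=(\varphi_*p_*x,b_g)=(p_*x,\varphi_*^{-1}b_g)=(p_*x,b_g)=0,\]
using in turn $p\widetilde\varphi=\varphi p$, that $\varphi_*$ preserves the intersection form, the hypothesis $\varphi_*b_g=b_g$, and $(p_*x,b_g)=0$. Hence $\widetilde\varphi_*(F_\infty)\subseteq F_\infty$; applying this to $\varphi^{-1}\in HS(a_g,b_g)$, whose lift $\widetilde\varphi^{-1}$ also commutes with the deck group, gives $\widetilde\varphi_*(F_\infty)=F_\infty$.

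The only step that is not formal is the identification $c=(p_*z,b_g)$ of the ``$\widetilde a_g$--coordinate'' of a class with a pushed-forward intersection number: this is precisely where the hypothesis $\varphi_*b_g=b_g$ (rather than merely $\varphi_*a_g=a_g$) enters, and it relies on reading off the action of $p_*$ on the basis of $H_1(S_\infty;\Z)$ supplied by Lemma~\ref{lem:generators-h1-sinfty}. Everything else is standard covering-space theory.
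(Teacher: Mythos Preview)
Your proof is correct and follows essentially the same approach as the paper's. In both arguments the key step is to compute the $\widetilde a_g$--coefficient of $\widetilde\varphi_*$ applied to a generator (or element) of $F_\infty$ by pushing forward via $p_*$ and pairing with $b_g$, then using $\varphi_*b_g=b_g$ together with invariance of the intersection form to see this coefficient vanishes; your version packages this as the intrinsic characterisation $F_\infty=\{z:(p_*z,b_g)=0\}$, while the paper writes out the basis expansion explicitly, but the computation $(\varphi_*\cdot\,,b_g)=(\,\cdot\,,b_g)$ is identical.
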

\begin{proof}
  The fact that $\varphi$ lifts is immediate from the 
  fact that $HS(a_g,b_g)$ preserves algebraic
  intersection number with $a_g$. The same fact also implies that any
  lift of $\varphi$ commutes with the deck group action.

  We are left with showing the last claim. Let as before
$\tau$ be a generator of the deck group of $S_\infty$.
By Lemma~\ref{lem:generators-h1-sinfty}, for any $i<g$ there are numbers
  $n^a_{k,j}, n^b_{k,j},m\in \mathbb{Z}$ so that
  \[ \widetilde{\varphi}_*( \widetilde{a}_i ) = 
\sum_{j<g, k\in\Z} n^a_{k,j}\tau^k\widetilde{a_j} + m \widetilde{a}_g + 
\sum_{j<g, k\in\Z} n^b_{k,j}\tau^k\widetilde{b}_j.\]
  Thus we have
  \[ \varphi_*(a_i) = \sum_{j<g, k\in\Z} n^a_{k,j}a_j + m a_g + \sum_{j<g, k\in\Z} n^b_{k,j}b_j \]
  Since $\varphi\in HS(a_g,b_g)$ we compute 
  \[ m = (\varphi_*(a_i), b_g) = (\varphi_*(a_i), \varphi_*(b_g)) =
  (a_i, b_g) = 0 .\]
  This implies
  $\widetilde{\varphi}_*( \widetilde{a}_i ) = \sum_{j<g, k\in\Z}
  n^a_{k,j}\tau^k\widetilde{a}_j + \sum_{j<g, k\in\Z}
  n^b_{k,j}s¸\tau^k\widetilde{b}_j \in F_\infty$.
  The case of $\widetilde{\varphi}_*( \widetilde{b}_i )$ is similar.
\end{proof}
\begin{definition}\label{def:matrices}
  Let $\varphi\in HS(a_g,b_g)$.
  \begin{enumerate}[i)]
  \item Denote by $M_\infty(\varphi)$ the $(2g-2)\times(2g-2)$--matrix
    with entries in $\ZZ$ describing the action of a lift
    $\widetilde{\varphi}_*$ of $\varphi$ on $F_\infty$, with respect
    to the basis $\widetilde{a}_1, \ldots, \widetilde{a}_{g-1},
    \widetilde{b}_1, \ldots, \widetilde{b}_{g-1}$. The matrix
    $M_\infty(\varphi)$ is well-defined up to a unit in $\ZZ$ 
    (namely, it depends on the choice of a lift of $\varphi$, any two of which differ by multiplication with some $t^k$).
  \item Denote by $B_\infty(\varphi)$ the ``bottom-left block'' of $M_\infty(\varphi)$;
    explicitly, $B_\infty(\varphi)$ is the $(g-1)\times (g-1)$-matrix 
defined by the requirement that for $j<g$
    \begin{equation}\label{Bmatrix} \widetilde{\varphi}_*(\widetilde{a}_j) = d +
    \sum_{i=1}^{g-1}(B_\infty(\varphi))_{i,j}\widetilde{b}_i 
\end{equation}
    where $d\in D_\infty$. Again, $B_\infty(\varphi)$ is only
    well-defined up to a unit in $\ZZ$ (in the same sense as above).
  \end{enumerate}
\end{definition}
Similarly, for 
$\varphi\in HS(a_g,b_g)$ the determinant 
\[ \det B_\infty(\varphi) \in \ZZ \] is defined up to a unit in
$\ZZ$. We will usually assume that the unit is chosen so that $\det B_\infty(\varphi)$ is a
polynomial.

\begin{remark}
  If $\varphi\in\Ig$ is an element of the Torelli group, then we can write $M_\infty(\varphi)=\mathrm{Id}+M'_\infty(\varphi)$ where $M'_\infty(\varphi)$ is a matrix with entries in the augmentation ideal
  $\mathfrak{a} = \ker(\ZZ\to\Z)$. This can be proved with an argument
  very much like Lemma~\ref{lem:torelli-preserves-free}.
\end{remark}

\subsection{Infinite Cyclic Covers of $3$-Manifolds}
In light of equation (\ref{Bmatrix}), the matrix $B_\infty(\varphi)$ 
defines a map $D_\infty\to C_\infty$ which describes part of the
action of $\tilde \varphi_*$.  
Its importance stems from the following immediate consequence of 
Proposition \ref{prop:describe-cover-homology}. 

\begin{proposition}\label{prop:presentation-matrix}
  Let $\varphi\in HS(a_g,b_g)$ 
  and let $(N_\varphi)_\infty \to N_\varphi$
  be the cover of $N_\varphi$ induced by $S_\infty\to S$. Then we have, as $\ZZ$--modules
  \[ H_1((N_\varphi)_\infty;\Z) = C_\infty \left/ \im
    B_\infty(\varphi) \right. \] Thus $B_\infty(\varphi)$
  is a presentation matrix of the $\ZZ$--module
  $H_1((N_\varphi)_\infty;\Z)$.
\end{proposition}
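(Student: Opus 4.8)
\smallskip

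The plan is to unfold Proposition~\ref{prop:describe-cover-homology} in the case $S'=S_\infty$ and then read off the presentation from the explicit $\ZZ$--module description of Lemma~\ref{lem:generators-h1-sinfty}. First, since $\varphi\in HS(a_g,b_g)$ and $S_\infty\to S$ is the cover defined by algebraic intersection with $a_g$, Corollary~\ref{cor:covers-for-torelli-gluings} applies: $(N_\varphi)_\infty\to N_\varphi$ is a genuine cover and is homeomorphic to $N_{\widetilde\varphi}$ for a lift $\widetilde\varphi$ of $\varphi$ to $S_\infty$. Proposition~\ref{prop:describe-cover-homology} then yields, as $\ZZ$--modules,
\[ H_1((N_\varphi)_\infty;\Z)=H_1(S_\infty;\Z)\big/\big\langle L',\ \widetilde\varphi_*L'\big\rangle,\qquad L'=\ker\bigl(H_1(S_\infty;\Z)\to H_1(V_\infty;\Z)\bigr). \]
By Lemma~\ref{lem:generators-h1-sinfty} together with the splitting~(\ref{finf}) there is a $\ZZ$--module decomposition $H_1(S_\infty;\Z)=C_\infty\oplus D_\infty\oplus\Z\widetilde a_g$ in which $L'=D_\infty\oplus\Z\widetilde a_g$. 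Thus $L'$ is a $\ZZ$--module complement of $C_\infty$, and the projection $\pi\colon H_1(S_\infty;\Z)\to C_\infty$ along $L'$ induces an isomorphism $H_1(S_\infty;\Z)/L'\cong C_\infty$ of $\ZZ$--modules; hence $H_1((N_\varphi)_\infty;\Z)\cong C_\infty/\pi(\widetilde\varphi_*L')$, and it suffices to identify $\pi(\widetilde\varphi_*L')$ with $\im B_\infty(\varphi)$.

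Writing $L'=D_\infty+\Z\widetilde a_g$, we have $\pi(\widetilde\varphi_*L')=\pi(\widetilde\varphi_*D_\infty)+\Z\cdot\pi(\widetilde\varphi_*\widetilde a_g)$. For the first summand I would invoke Lemma~\ref{lem:torelli-preserves-free}, so that $\widetilde\varphi_*$ sends $F_\infty=C_\infty\oplus D_\infty$ into itself, together with the defining equation~(\ref{Bmatrix}) of $B_\infty(\varphi)$: the $C_\infty$--component of $\widetilde\varphi_*\widetilde a_j$ is by definition $\sum_i(B_\infty(\varphi))_{ij}\widetilde b_i$, i.e. has coordinate vector the $j$-th column of $B_\infty(\varphi)$, so that $\pi(\widetilde\varphi_*D_\infty)=\im B_\infty(\varphi)$. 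The one step that is not pure bookkeeping is the contribution of the extra generator $\widetilde a_g$, and here I claim $\widetilde\varphi_*\widetilde a_g\in\Z\widetilde a_g\subseteq L'=\ker\pi$, so that this second summand vanishes. To see this, note that in the decomposition of Lemma~\ref{lem:generators-h1-sinfty} the deck generator $\tau$ acts by multiplication by $t$ on the free summand $F_\infty\cong\ZZ^{2g-2}$ and trivially on $\Z\widetilde a_g$; since $t-1$ acts injectively on $F_\infty$, the submodule of $\tau$--invariants $H_1(S_\infty;\Z)^\tau$ equals $\Z\widetilde a_g$. As $\widetilde\varphi$ is a lift of $\varphi$, the map $\widetilde\varphi_*$ commutes with $\tau$ (Lemma~\ref{lem:torelli-preserves-free}) and therefore preserves $H_1(S_\infty;\Z)^\tau=\Z\widetilde a_g$, which is exactly the claim. (Equivalently, $\widetilde a_g$ spans the radical of the $\tau$--equivariant intersection pairing on $H_1(S_\infty;\Z)$, which is $\widetilde\varphi_*$--invariant because $\widetilde\varphi$ is orientation preserving and commutes with $\tau$.)

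Putting the pieces together gives $\pi(\widetilde\varphi_*L')=\im B_\infty(\varphi)$, hence $H_1((N_\varphi)_\infty;\Z)\cong C_\infty/\im B_\infty(\varphi)$ as $\ZZ$--modules (the right-hand side is well posed, since $\im B_\infty(\varphi)$ does not change when $B_\infty(\varphi)$ is multiplied by the unit $t^k$ relating two choices of lift), which is precisely the statement that $B_\infty(\varphi)$ is a presentation matrix of this module. The step I expect to be the main obstacle is isolating and disposing of the $\Z\widetilde a_g$ term, i.e. the part of the Lagrangian $L'$ not visible inside the free module $F_\infty$; once that is handled, the proposition is a direct transcription of Proposition~\ref{prop:describe-cover-homology} through Lemma~\ref{lem:generators-h1-sinfty} and the definition of $B_\infty(\varphi)$.
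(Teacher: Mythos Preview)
Your proof is correct and is precisely the unpacking of what the paper leaves implicit: the paper merely states that the proposition is ``an immediate consequence of Proposition~\ref{prop:describe-cover-homology}'' and gives no further argument. Your handling of the $\widetilde a_g$ summand via the $\tau$--invariants of $H_1(S_\infty;\Z)$ is the clean way to dispose of the one nonformal step, and the rest is exactly the intended reduction through Lemma~\ref{lem:generators-h1-sinfty} and equation~(\ref{Bmatrix}).
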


\subsection{Finite Cyclic Covers}
\label{sec:finite-covers}
As we will explain in detail in Section~\ref{sec:finishing}, exponential torsion growth
in a tower of cyclic covers of the manifold $N_\varphi$ will be governed by the logarithmic
Mahler measure of the polynomial $\det B_\infty(\varphi)$.
In Section~\ref{sec:algebra} we will describe a criterion 
for positivity of 
this Mahler measure which is detectable in finite
sub-covers of $S_\infty$. In this subsection we explain how $B_\infty(\varphi)$ 
affects the action on homology of lifts to finite sub-covers.

\smallskip
Choose $q\in\mathbb{N}_{\geq 3}$
and let $S_q$ be the $q$-fold cyclic cover of $S$ defined
by algebraic intersection number with $a_g$ mod $q$; denote by $G
\cong \mathbb{Z}/q\mathbb{Z}$ its deck group. We want to
describe the homology of $S_q$ as a $\mathbb{Z}[G]$-module 
(compare \cite[Section~4]{Lo} for a
very similar discussion) and its relation 
to the homology of $S_\infty$.

The Chevalley-Weil theorem \cite{CW} states 
that as a $G$-re\-pre\-sen\-ta\-tion space, we have
\[ H_1(S_q;\C) \iso \C[G]^{2g-2} \oplus \C^2. \]
In the case of cyclic covers, we can
obtain this more explicitly (and with integral coefficients) 
as follows: $S_q$ is
covered by $S_\infty$; denote by $\hat{\alpha}_i,\hat{\beta}_i, i<g$
the images of $\widetilde{\alpha}_i, \widetilde{\beta}_i$. 

With an
argument as in Lemma~\ref{lem:generators-h1-sinfty} one easily sees
that the homology classes $\hat{a}_i,\hat{b}_i, 1\leq i<g$ of the
curves $\hat{\alpha}_i,\hat{\beta}_i$ 
span a free $\mathbb{Z}[G]$-submodule $\Z[G]^{2g-2}$ of $H_1(S_q;\Z)$.
Choose lifts 
$\hat{\alpha}_g$ of $\alpha_g$ 
and  $\hat{\beta}_g$ of $\beta_g$. It is
easy to see that their homology classes $\hat a_g,\hat b_g$  
span a trivial $\mathbb{Z}[G]$-module $\Z^2$. 

Thus, 
\begin{equation}\label{Fq}
F_q = \mathrm{span}_{\Z[G]}\{\hat{a}_i,\hat{b}_i, i<g\} \end{equation}
is a free $\mathbb{Z}[G]$-submodule of $H_1(S_q;\Z)$ which decomposes as
\[ F_q = C_q\oplus D_q, \quad D_q = \mathrm{span}_{\Z[G]}\{\hat{a}_i,
i<g\}, C_q = \mathrm{span}_{\Z[G]}\{\hat{b}_i, i<g\} \]

As in Lemma~\ref{lem:torelli-preserves-free}, any lift $\hat{\varphi}$
of an element $\varphi\in HS(a_g,b_g)$ respects the decomposition
$H_1(S_q;\Z)\iso F_q\oplus \Z^2$. Hence, we may define matrices
$M_q(\varphi), B_q(\varphi)$ as in Definition~\ref{def:matrices} (replacing
all lifts $\widetilde{\cdot}$ to $S_\infty$ by the corresponding lifts $\hat{\cdot}$
to $S_q$).

Summarising, we have the following.
\begin{lemma}\label{lem:covering-map-compatibility}
  With the identifications as above, the covering map $S_\infty \to
  S_q$ induces on homology a map
  \[ H_1(S_\infty;\Z) \iso \ZZ^{2g-2} \oplus \Z \to \Z[G]^{2g-2}
  \oplus \Z^2 \iso H_1(S_q;\Z) \] inducing via the quotient homomorphism  
  $\mathbb{Z}\to \mathbb{Z}/q\mathbb{Z}$ 
  a map between the free modules 
  \[ F_\infty = C_\infty \oplus D_\infty \to C_q\oplus D_q = F_q\]
 which respects the direct sum
  decompositions.  Furthermore, $M_q(\varphi),
  B_q(\varphi)$ are the images of $M_\infty(\varphi),
  B_\infty(\varphi)$ under coordinate-wise application of the ring morphism 
  $\ZZ\to\Z[G]$ induced by $\Z\to\Z/q\Z$.
\end{lemma}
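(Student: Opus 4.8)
Throughout one assumes $\varphi\in HS(a_g,b_g)$, so that all four matrices are defined. The plan is to deduce every assertion from naturality for the intermediate covering $p\colon S_\infty\to S_q$. This covering exists because both $S_\infty\to S$ and $S_q\to S$ are defined by algebraic intersection with $a_g$ (the latter reduced mod $q$), so the deck group $\Z=\langle\tau\rangle$ of the former surjects onto the deck group $G\cong\Z/q\Z$ of the latter via the quotient $\Z\to\Z/q\Z$, and in fact $S_q=S_\infty/q\Z$.

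First I would fix compatible choices of the auxiliary data on the two covers. On $S_q$ I take $\hat{Y}:=p(\widetilde{Y})$ as the preferred lift of the complementary subsurface $Y=S\setminus\alpha_g$; then $p$ maps $\widetilde{Y}$ homeomorphically onto $\hat{Y}$, hence carries the lifts $\widetilde{\alpha}_i,\widetilde{\beta}_i\subset\widetilde{Y}$ ($i<g$) to the lifts $\hat{\alpha}_i,\hat{\beta}_i\subset\hat{Y}$, and I set $\hat{\alpha}_g:=p(\widetilde{\alpha}_g)$. With these choices $p_*\widetilde{a}_i=\hat{a}_i$, $p_*\widetilde{b}_i=\hat{b}_i$ for $i<g$, and $p_*\widetilde{a}_g=\hat{a}_g$. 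Next I would record that $p$ is equivariant: $p\circ\tau^k=g_k\circ p$, where $g_k\in G$ is the image of $\tau^k$; hence the induced $p_*\colon H_1(S_\infty;\Z)\to H_1(S_q;\Z)$ intertwines the $\ZZ$-module structure with the $\Z[G]$-module structure along the ring morphism $\pi\colon\ZZ\to\Z[G]$ induced by $\Z\to\Z/q\Z$. Combining this with the explicit generators of Lemma~\ref{lem:generators-h1-sinfty} and its $S_q$-analogue (used to define $F_q$ in (\ref{Fq})), I conclude that $p_*$ restricts to the map $F_\infty\to F_q$ obtained by applying $\pi$ to coordinates with respect to the bases $\widetilde{a}_i,\widetilde{b}_i$ and $\hat{a}_i,\hat{b}_i$, and that this respects the splittings $C\oplus D$ since $D_\infty,D_q$ are spanned by the $a$-classes and $C_\infty,C_q$ by the $b$-classes. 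This is the first assertion.

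For the matrices, I would also choose the lifts of $\varphi$ compatibly. Pick a lift $\widetilde{\varphi}$ of $\varphi$ to $S_\infty$ as in Lemma~\ref{lem:torelli-preserves-free}; since it commutes with the $\tau$-action it commutes with $\tau^q$ and so descends to a lift $\hat{\varphi}$ of $\varphi$ to $S_q=S_\infty/q\Z$ with $p\circ\widetilde{\varphi}=\hat{\varphi}\circ p$, whence $p_*\circ\widetilde{\varphi}_*=\hat{\varphi}_*\circ p_*$ on homology. Applying $p_*$ to the defining relations of the blocks of $M_\infty(\varphi)$ --- for $B_\infty(\varphi)$ this is $\widetilde{\varphi}_*(\widetilde{a}_j)=d+\sum_i(B_\infty(\varphi))_{i,j}\widetilde{b}_i$ with $d\in D_\infty$, see (\ref{Bmatrix}) --- and using $p_*(D_\infty)\subseteq D_q$, $p_*(C_\infty)\subseteq C_q$ together with the two intertwining relations, the right-hand side becomes $p_*(d)+\sum_i\pi\big((B_\infty(\varphi))_{i,j}\big)\hat{b}_i$ with $p_*(d)\in D_q$, while the left-hand side equals $\hat{\varphi}_*(\hat{a}_j)$. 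Comparing with the definition of $B_q(\varphi)$ (respectively of $M_q(\varphi)$) yields $B_q(\varphi)=\pi(B_\infty(\varphi))$ and $M_q(\varphi)=\pi(M_\infty(\varphi))$ entrywise; this is consistent with the ``well-defined up to a unit'' ambiguity because $\pi(t^k)$ is a unit of $\Z[G]$.

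The step I expect to require the most care is purely organisational: simultaneously fixing the preferred lifts of $Y$, of the curves $\alpha_i,\beta_i$, and of $\varphi$ on $S_\infty$ and on $S_q$ so that $p$ is compatible with all of them at once. One should also note that the extra trivial summand generated by $\hat{b}_g$, present in $H_1(S_q;\Z)$ but absent from $H_1(S_\infty;\Z)$, causes no trouble, since the assertions only concern the free parts $F_\infty$ and $F_q$. No further ideas are needed.
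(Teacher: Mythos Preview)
Your proposal is correct and is exactly the argument the paper has in mind: the paper states the lemma with the words ``Summarising, we have the following'' and gives no further proof, treating it as an immediate consequence of the preceding setup. Your write-up faithfully unpacks that implicit argument --- compatible choices of preferred lifts under $p\colon S_\infty\to S_q=S_\infty/q\Z$, equivariance of $p$ along $\Z\to\Z/q\Z$, and a compatible descent of $\widetilde{\varphi}$ to $\hat{\varphi}$ --- and there is nothing to add or correct.
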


\section{The Torelli representation for a finite cover}
\label{sec:c-side}
The purpose of this section is to show that for a finite cover
as discussed in Section~\ref{sec:finite-covers}, the matrices
$M_q(\varphi), \varphi\in\Ig$ map to a Zariski dense subset 
of some suitable algebraic group. This will be the main ingredient in Section~\ref{sec:finishing}
that allows to use the results by Benoist and Quint on random walks.
We can restrict here to the case of $\varphi \in \Ig$ for simplicity; as
$\Ig < HS(a_g,b_g)$ the image of $HS(a_g,b_g)$ will then also be Zariski dense.

\smallskip
The arguments rest on the results in \cite[Section~4]{Lo}. 
We begin with explaining these results in the form we need.
%

\subsection{Skew-Hermitian modules}\label{sec:skew-hermitian-modules}
We require a few classical results on automorphism groups of
skew-Hermitian modules; we only summarise the most important points
here, details can be found in \cite{K-Theory}.

Let $\mathcal{R}$ be a ring
with an involution $\overline{\cdot}:\mathcal{R}\to\mathcal{R}$ 
which is a homomorphism with respect to the additive structure of $\mathcal{R}$.
Let ${\mathcal M}$ be a module over $\mathcal{R}$.
A \emph{skew-Hermitian form} on $\mathcal{M}$ is a non-degenerate sesqui-linear form $\langle \cdot,\cdot\rangle :\mathcal{M}\times\mathcal{M} \to \mathcal{R}$ such that
\[ \langle x,y\rangle = -\overline{\langle y,x\rangle}\]
for all $x,y\in\mathcal{M}$.

Given a module $(\mathcal{M},\langle ,\rangle)$ with such a form, 
we will denote the group of automorphisms that preserve the form by
\[\U(\mathcal{M}) = 
\left\{ \alpha\in\mathrm{Aut}(\mathcal{M}) |\; 
\langle \alpha x,\alpha y \rangle = \langle x,y\rangle\right\}. \]
It is well known that in the special case where $\mathcal{M}=\C^{2n}$ for some $n\in\mathbb{N}$ and the involution is given by complex conjugation, we can find a basis
\[\{x_i,y_i\}_{i=1}^n \]
of $\C^{2n}$ such that
\[\langle x_i,y_j\rangle =
\delta_{ij}\;\;\text{and}\;\;\langle x_i,x_j\rangle =\langle y_i,y_j\rangle =0\]
for all $1\leq i,j\leq n$. In this case we will write
\[\U(\mathcal{M})=\U(n,n).\]
The usual symplectic group
$\mathrm{Sp}_{2n}(\R)$ is precisely the subgroup of 
$\U(n,n)$
of all those matrices with real entries. We will write $\SU(n,n)$ for the subgroup 
of $U(n,n)$ of matrices with determinant $1$.


\subsection{The representation}
\label{subsec:rep}
We are now ready to revisit Looijenga's arguments from \cite[Section~4]{Lo} to study the set of matrices 
$M_q(\varphi)$ for $\varphi\in\Ig$ (compare Section~\ref{sec:finite-covers}). We need the
following set up.

Fix an integer $q\geq 3$ and let $G$ be a cyclic group of order $q$.
Denote by $k = \mathbb{Q}[\zeta_q]$ the extension of $\Q$ by the $q$-th roots of unity. 
We identify $k$ with a subfield of the complex numbers. 
There is a map
\[ \iota: \Z[G] \to k \subset \C \] mapping a generator of $G$ to a
primitive $q$-th root of unity\footnote{in the notation of \cite{Lo},
  we have $k \iso K_G$, the field defined by the trivial cyclic
  quotient of $G$}.  We often consider the image of $\iota$ as
contained in the complex numbers rather than in the abstract field
$k$. Note that $\iota$ is not unique. Different choices of $\iota$ differ by an
element of the Galois group of the field $k$. If $q$ is prime then
$\iota$ can be chosen to send any given generator $1\in G$ to any
prescribed 
$q$--th root of unity. This freedom of choice will be important later
(in Section 6). Until then, all arguments will work for any choice of $\iota$. 

Under $\iota$, 
the involution on $\Z[G]$ induced by $\overline{g}=g^{-1}$ corresponds to complex
conjugation: 
\[ \iota(\overline{p}) = \overline{\iota(p)}, \quad\forall p \in \Z[G].\]
If $x \in \Z[G]^n$, 
we denote by $\iota x \in \C^n$ the
vector obtained by applying $\iota$ coordinate-wise.

As explained in Section~\ref{sec:matrix}, we may assume without loss of 
generality that $S_q\to S$ is the regular cyclic
cover of degree $q$ with deck group $G$ induced by algebraic
intersection number mod $q$ with the curve $\alpha_g$. We use the notation from 
that section.

The homology
$H_1(S_q;\Z)$ is equipped with the usual intersection form $(\cdot,
\cdot)$ as well as with a skew-Hermitian form $\Phi : H_1(S_q;\Z)
\times H_1(S_q;\Z) \to \Z[G]$ (sometimes called Reidemeister pairing)
given by
\[ \Phi(x,y) = \sum_{g\in G} (x,gy)g. \]

Because $G$ preserves the submodule 
$F_q \subset H_1(S_q;\Z)$, the bilinear form $\Phi$ 
defines a skew-Hermitian form on $F_q \iso \Z[G]^{2g-2}$.

Furthermore, we have for all $x,y\in \Z[G]^{2g-2}$ and $\varphi\in\Ig$
\begin{equation}\label{invariance}
\Phi( M_q(\varphi)x, M_q(\varphi)y ) = 
\Phi( x,y ) \end{equation}
since each lift of $\varphi$ preserves the intersection form $(\cdot,\cdot)$
and commutes with the deck group action.

Recall from Subsection~\ref{sec:finite-covers} the definition of the 
classes $\hat a_i,\hat b_k\in H_1(S_q;\mathbb{Z})$.
In what follows, we choose a basis $\{x_i,y_i\}_{i=1}^{g-1}$ for $\C^{2g-2}$ such that: 
\[\iota(\hat{a_i})=x_i \text{ and } \iota(\hat{b_i})=y_i \]
for all $i=1,\ldots, g-1$. We also define a skew-Hermitian form $\langle\cdot,\cdot\rangle$ on $\C^{2g-2}$ by
\[\langle x_i,y_i\rangle = \delta_{ij} \text{ and } \langle x_i,x_j\rangle =\langle y_i,y_j\rangle=0 \]
for all $i,j=1,\ldots, g-1$.

We now have the following
\begin{lemma}\label{lem:symplectic-to-symplectic} For all $x,y\in H_1(S_q;\Z)$:
  \[\iota \Phi(x,y) = \langle \iota x, \iota y\rangle \]
\end{lemma}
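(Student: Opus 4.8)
The plan is to reduce the identity to a finite check on a $\Z[G]$-basis of $F_q$ and then to evaluate $\Phi$ there using the explicit description of $S_q$ as a cyclic stack of copies of $Y$. Since $\iota x\in\C^{2g-2}$ is only defined for $x$ in the free submodule $F_q\cong\Z[G]^{2g-2}$, I read the statement as an identity of $\C$-valued forms on $F_q$.

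First I would record that both sides are sesquilinear in the same sense: additive in each variable, left $\Z[G]$-linear and right conjugate-$\Z[G]$-linear, where $\Z[G]$ acts on $\C$ through $\iota$. For $\Phi$ this is immediate from $\Phi(x,y)=\sum_{g\in G}(x,gy)g$ using $G$-invariance of the intersection form together with the skew-Hermitian relation $\Phi(y,x)=-\overline{\Phi(x,y)}$; since $\iota$ is a ring homomorphism with $\iota(1)=1$ and $\iota(\overline p)=\overline{\iota(p)}$, the form $\iota\Phi(\cdot,\cdot)$ inherits these properties. The form $\langle\cdot,\cdot\rangle$ on $\C^{2g-2}$ is skew-Hermitian by construction, so $\langle\iota\cdot,\iota\cdot\rangle$ has the matching properties. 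Expanding $x=\sum_k p_k e_k$, $y=\sum_l q_l e_l$ in the $\Z[G]$-basis $\{e_k\}=\{\hat a_i,\hat b_i\}_{i<g}$ of $F_q$, both sides become $\sum_{k,l}\iota(p_k)\,\overline{\iota(q_l)}\cdot(\text{value on }e_k,e_l)$, so it suffices to verify the identity when $x,y$ run over $\hat a_1,\dots,\hat a_{g-1},\hat b_1,\dots,\hat b_{g-1}$.

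Next I would carry out this basis check geometrically. Recall from the construction of $H_1(S_q;\Z)$ (the argument ``as in Lemma~\ref{lem:generators-h1-sinfty}'') that $S_q$ is the union of $q$ pairwise disjoint embedded copies $\widetilde Y_0,\dots,\widetilde Y_{q-1}$ of $Y$ --- the images of $\widetilde Y,\tau\widetilde Y,\dots,\tau^{q-1}\widetilde Y\subset S_\infty$ --- glued cyclically along lifts of $\alpha_g$, with $G$ acting by $\tau\colon\widetilde Y_n\to\widetilde Y_{n+1}$; the curves $\hat\alpha_i,\hat\beta_i$ ($i<g$) lie in the interior of $\widetilde Y_0$, which maps homeomorphically and orientation-preservingly onto $Y$. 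For nontrivial $g\in G$ the translate $g\hat\beta_j$ lies in some $\widetilde Y_n$ with $n\ne 0$, hence is disjoint from $\hat\alpha_i\subset\widetilde Y_0$, while $(\hat a_i,\hat b_j)_{S_q}=(\alpha_i,\beta_j)_Y=\delta_{ij}$; therefore $\Phi(\hat a_i,\hat b_j)=\delta_{ij}\cdot 1_G$. Similarly, using that $\alpha_1,\dots,\alpha_{g-1}$ are pairwise disjoint on $S$ (likewise the $\beta_i$), every $G$-translate of $\hat\alpha_j$ is disjoint from $\hat\alpha_i$, so $\Phi(\hat a_i,\hat a_j)=0$ and $\Phi(\hat b_i,\hat b_j)=0$. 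Applying $\iota$ and using $\iota(1_G)=1$ gives $\iota\Phi(\hat a_i,\hat b_j)=\delta_{ij}=\langle x_i,y_j\rangle$ and $\iota\Phi(\hat a_i,\hat a_j)=0=\langle x_i,x_j\rangle$, $\iota\Phi(\hat b_i,\hat b_j)=0=\langle y_i,y_j\rangle$, which are precisely the defining relations of $\langle\cdot,\cdot\rangle$; together with the reduction above this finishes the proof.

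The only delicate point --- and it is bookkeeping rather than a genuine obstacle --- is this geometric step: I must make sure that distinct $G$-translates of the basis curves really lie in disjoint copies of $Y$ inside $S_q$, so that the single copy $\widetilde Y_0$ accounts for the entire value of $\Phi$ on basis elements, and that orientations are tracked so the cross term comes out as $+\delta_{ij}$. Both are immediate from the cyclic-stack picture of $S_q$ recalled above, which is inherited from the pairwise disjointness of the translates $\tau^n\widetilde Y$ in $S_\infty$ established in the proof of Lemma~\ref{lem:generators-h1-sinfty}.
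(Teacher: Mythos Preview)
Your proof is correct and follows the same strategy as the paper: compute $\Phi$ on the $\Z[G]$--basis $\hat a_i,\hat b_j$ of $F_q$, then extend by sesquilinearity. The paper's argument is just a compressed version of yours---it simply asserts $\Phi(\hat a_i,\hat b_j)=\delta_{ij}$ and $\Phi(\hat a_i,\hat a_j)=\Phi(\hat b_i,\hat b_j)=0$ without spelling out the disjoint-translates picture, and then invokes skew-Hermitianness and linearity of $\iota$ in one line; your observation that the statement only makes literal sense on $F_q$ is also well taken.
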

\begin{proof} We have:
\[\Phi(\hat{a}_i,\hat{b}_j) =  \delta_{ij} \text{ and } (\hat{a}_i,\hat{a}_j)=(\hat{b}_i,\hat{b}_j)=0  \]
The form $\Phi$ is skew-Hermitian and $\iota$ is linear by definition, hence the lemma.
\end{proof}

By Lemma~\ref{lem:symplectic-to-symplectic} and equation (\ref{invariance}), 
for all $\varphi\in\Ig$
the matrix $\iota M_q(\varphi)$ is contained in the group $\U(g-1,g-1)$
of skew Hermitian automorphisms of $\C^{2g-2}$. 

The assignment $\varphi\mapsto \iota M_q(\varphi)$ is 
not a representation of $\Ig$ since the matrix $M_q(\varphi)$ depends on
a choice of a lift of $\varphi$ to our cyclic cover, and therefore is defined only up to a unit in $\Z[G]$ (compare
Section~\ref{sec:matrix}). The (complex) matrix $\iota M_q(\varphi)$ is thus also only defined up to multiplication by a $q$--th root of unity.

To avoid these problems, let
$\widehat{\mathcal{I}}_g^{(q)}$ be the group of all lifts to $S_q$ of all
elements of $\Ig$.  We then have a short exact sequence
\[ 1 \to G \to \widehat{\mathcal{I}}_g^{(q)} \to \Ig \to 1 \]
and an actual representation
\[\rho_q:\widehat{\mathcal{I}}_g^{(q)}\to \U(g-1,g-1). \]

In \cite{Lo}, Looijenga studied this representation. More generally,  
denote by ${\rm Mod}(S)^{(q)}$ the 
finite index subgroup of 
${\rm Mod}(S)$ of all elements which admit a lift to $S_q$. The group
$\Ig$ is a normal subgroup of ${\rm Mod}(S)^{(q)}$.
As before, there is an exact sequence
\[1\to G\to \widehat{{\rm Mod}}(S)^{(q)}\to {\rm Mod}(S)^{(q)}\to 1.\]
Using the action on homology, we obtain a representation
$\hat\rho_q:\widehat{{\rm Mod}}(S)^{(q)}\to \U(g-1,g-1)$ extending
the representation $\rho_q$. 

Denote by $\U^\sharp(g-1,g-1)$ the subgroup of $\U(g-1,g-1)$ consisting
of matrices whose determinant is equal to a square of a $q$-th root of unity.
Note that $\SU(g-1,g-1)$ is a finite index subgroup of $\U^\sharp(g-1,g-1)$.
Theorem 2.4 of \cite{Lo} states the following.

\begin{theorem}\label{arithmeticity}
Let $g\geq 3,q\geq 3$ and let $R_q$ be the ring of integers in the number field 
$\mathbb{Q}[\zeta_q]$. Then 
\[\hat \rho_q\left(\widehat{\rm Mod}(S)\right)=U^\sharp(g-1,g-1;R_q).\]
\end{theorem}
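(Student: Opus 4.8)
The plan is to reproduce Looijenga's argument (\cite[Theorem~2.4]{Lo}). It has two parts: the containment $\hat\rho_q\big(\widehat{{\rm Mod}}(S)^{(q)}\big)\subseteq\U^\sharp(g-1,g-1;R_q)$, which is comparatively easy, and the reverse inclusion, which is the real point. For the containment: the image lies in $\U(g-1,g-1)$ by construction of $\hat\rho_q$ (this extends the computation for $\Ig$ coming from equation~(\ref{invariance}) and Lemma~\ref{lem:symplectic-to-symplectic}); and since every mapping class preserves the $\Z[G]$-lattice $F_q\cong\Z[G]^{2g-2}$ in $H_1(S_q;\Z)$, applying $\iota$ coordinatewise shows the entries lie in $R_q$. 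For the determinant one notes that any $A\in\U(g-1,g-1;R_q)$ has $\det A\in R_q^{*}$ with $\det A\cdot\overline{\det A}=1$; since complex conjugation is central in $\mathrm{Gal}(\Q(\zeta_q)/\Q)$, this identity forces every archimedean absolute value of $\det A$ to equal $1$, so $\det A$ is a root of unity by Kronecker's theorem, and a further computation on generators identifies the determinant image as exactly the group of squares of $q$-th roots of unity, which is the defining condition of $\U^\sharp$.

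For the reverse inclusion I would exhibit generators of the arithmetic group inside the image. First, compute the image of a Dehn twist $T_\gamma$ about a non-separating simple closed curve $\gamma\subset S$ whose homology class has algebraic intersection $0$ with $a_g$ modulo $q$, so that $\gamma$ lifts to $q$ disjoint copies $\tau^i\tilde\gamma$: then $T_\gamma$ lifts to $\prod_{i=0}^{q-1}T_{\tau^i\tilde\gamma}$, whose action on $H_1(S_q;\Z)$ is $x\mapsto x+\Phi(x,v)\cdot v$ with $v=[\tilde\gamma]$, and, for $\gamma$ chosen so that $v$ lies in $F_q$, applying $\iota$ turns this into the unitary transvection $w\mapsto w+\langle w,\iota v\rangle\,\iota v$ on $\C^{2g-2}$ along the isotropic vector $\iota v\in R_q^{2g-2}$. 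The second, geometric, step is a realization lemma: one must find curves $\gamma$ — including curves that cross $\alpha_g$ while meeting it algebraically trivially mod $q$, whose lifts wind through several translates $\tau^i\widetilde Y$ of the preferred sheet and hence carry genuine $\zeta_q$-content in $\iota v$ — so that the resulting transvections, together with products of them and an induction on genus using the subsurface inclusions $\SU(g'-1,g'-1;R_q)\hookrightarrow\SU(g-1,g-1;R_q)$, exhaust the elementary subgroup of the hyperbolic unitary group. The third step is the algebraic input: because the Witt index $g-1$ is at least $2$ — this is exactly where $g\geq3$ enters — the group $\SU(g-1,g-1;R_q)$ is generated by its elementary unitary matrices, a standard stability result for classical groups over rings of integers (see \cite{K-Theory}), so the image contains $\SU(g-1,g-1;R_q)$, hence a finite-index subgroup of $\U^\sharp(g-1,g-1;R_q)$; the remaining finitely many cosets are picked up by exhibiting lifted mapping classes whose determinants generate the group of squares of $q$-th roots of unity, and combined with the containment this gives equality.

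The step I expect to be the main obstacle is the second one: upgrading ``the image is large'' to ``the image is exactly the arithmetic group''. Producing finitely many transvections with directions generic enough to force Zariski density in $\U(g-1,g-1)$ is relatively cheap, but Zariski density is by itself useless here since thin subgroups are also Zariski dense; one genuinely has to match the Dehn-twist images against an honest generating set of $\SU(g-1,g-1;R_q)$. It is this requirement that forces both the simple-closed-curve realization lemma — in particular the need to exploit curves crossing $\alpha_g$ to obtain transvections along vectors with nontrivial $\zeta_q$-coordinates, not merely integer ones — and the elementary-generation theorem for hyperbolic unitary groups of Witt index at least $2$, and hence the hypothesis $g\geq3$.
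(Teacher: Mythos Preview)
The paper does not give its own proof of this statement: the theorem is quoted directly as Theorem~2.4 of Looijenga~\cite{Lo}, with no argument supplied beyond the citation. So there is nothing in the paper to compare your proposal against.

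That said, your sketch is a reasonable summary of the strategy behind Looijenga's result. The containment direction is exactly as you describe (lattice preservation plus the Reidemeister pairing, together with a determinant computation). For the surjectivity, the essential mechanism is indeed that lifts of suitable Dehn twists act as unitary transvections $w\mapsto w+\langle w,v\rangle v$ on $F_q$, and that one must produce enough such $v$'s with genuine $\zeta_q$-content by choosing curves that intersect $\alpha_g$ nontrivially yet lift; your identification of this realization step as the crux is correct. One caution: the clean appeal to ``elementary generation of $\SU(g-1,g-1;R_q)$ for Witt index $\geq 2$'' from \cite{K-Theory} is somewhat optimistic as stated --- for general rings of integers in number fields the elementary generation and stability theorems require care about the exact hypotheses, and Looijenga's published argument proceeds more concretely by exhibiting an explicit finite collection of curves and checking directly that their transvections generate, rather than invoking a black-box $K$-theoretic statement. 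If you intend to actually carry out the proof rather than cite it, you should either verify that the relevant stability theorem applies to $R_q$ in the form you need, or follow Looijenga's explicit route.
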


Here the group $U^\sharp(g-1,g-1;R_q)$ is just the subgroup of 
$U^\sharp(g-1,g-1)$ of matrices with coefficients in $R_q$.

\subsection{Denseness}

We are now ready to prove that the image of the representation 
$\rho_q$ defined above is Zariski dense in  
$\SU(g-1,g-1)$.

The proof of the following proposition was suggested to us by an anonymous
referee and replaces an earlier argument which followed Looijenga's paper \cite{Lo}.

\begin{proposition}\label{prop:torelli-image-dense}
  Let $g\geq 3$ and $q\geq 3$; then 
  \[ \rho_q\left(\widehat{\mathcal{I}}_g^{(q)}\right) \cap \SU(g-1,g-1) \]
  is a finite index subgroup of $\SU(g-1,g-1;R_q)$. 
\end{proposition}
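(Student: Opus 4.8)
The goal is to prove that $\rho_q(\widehat{\mathcal I}_g^{(q)})\cap\SU(g-1,g-1)$ is a finite-index subgroup of $\SU(g-1,g-1;R_q)$. The key inputs are Theorem~\ref{arithmeticity} (Looijenga's arithmeticity statement, $\hat\rho_q(\widehat{\mathrm{Mod}}(S)^{(q)})=U^\sharp(g-1,g-1;R_q)$) and the fact that $\Ig$ is a \emph{normal} subgroup of $\mathrm{Mod}(S)^{(q)}$, so that $\widehat{\mathcal I}_g^{(q)}$ is normal in $\widehat{\mathrm{Mod}}(S)^{(q)}$ and hence $\rho_q(\widehat{\mathcal I}_g^{(q)})$ is a normal subgroup of $U^\sharp(g-1,g-1;R_q)$. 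Write $H=\rho_q(\widehat{\mathcal I}_g^{(q)})\cap\SU(g-1,g-1)$; this is a normal subgroup of $\SU(g-1,g-1;R_q)$, which is a finite-index subgroup of $U^\sharp(g-1,g-1;R_q)$. So everything reduces to showing that a normal subgroup of the arithmetic group $\Gamma=\SU(g-1,g-1;R_q)$ that arises this way cannot be too small — it is either finite (central) or of finite index.

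\textbf{Step 1: normality and the exact sequence.} First I would record that $\widehat{\mathcal I}_g^{(q)}\triangleleft\widehat{\mathrm{Mod}}(S)^{(q)}$, coming from $\Ig\triangleleft\mathrm{Mod}(S)^{(q)}$ together with the fact that the central extensions by $G$ are compatible. Applying $\hat\rho_q$ and intersecting with $\SU(g-1,g-1)$ shows $H\triangleleft\Gamma$. This is the conceptual heart: we have turned a question about the image of Torelli into a question about normal subgroups of a higher-rank arithmetic lattice.

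\textbf{Step 2: the Margulis normal subgroup theorem.} The group $\SU(g-1,g-1)$ is a simple Lie group of real rank $g-1\geq 2$ (as $g\geq 3$), and $\Gamma=\SU(g-1,g-1;R_q)$ is an irreducible lattice in the (higher-rank, by restriction of scalars) semisimple group $\prod_{\sigma}\SU(g-1,g-1)^\sigma$ it sits in. By the Margulis Normal Subgroup Theorem, any normal subgroup $H\triangleleft\Gamma$ is either finite and central, or of finite index in $\Gamma$. Hence it suffices to rule out that $H$ is central — equivalently, to exhibit a single element of $\widehat{\mathcal I}_g^{(q)}$ whose image under $\rho_q$ lies in $\SU(g-1,g-1)$ but is not a scalar matrix. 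For this I would use a bounding-pair map or a Dehn twist about a separating curve in $S$: such an element is in $\Ig$, its lift acts nontrivially on $H_1(S_\infty)$ hence (choosing $q$ and the lift appropriately, or after composing with a deck transformation to land in $\SU$) acts non-centrally on $\C^{2g-2}$. Concretely, one shows $\rho_q$ is non-trivial on $F_q$: if every element of $\widehat{\mathcal I}_g^{(q)}$ acted by a scalar, then the whole Torelli group would act trivially on $H_1(S_q;\Z)\otimes\Z[1/q]$ modulo the trivial summand, contradicting e.g. that Torelli acts nontrivially on homology of finite covers (this is exactly the kind of statement established in \cite{Lo}).

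\textbf{The main obstacle.} The delicate point is the interplay between the genuine representation $\rho_q$ of the extension $\widehat{\mathcal I}_g^{(q)}$ and the ambiguity by $q$-th roots of unity that plagues $\varphi\mapsto\iota M_q(\varphi)$: one must be careful that intersecting the image with $\SU(g-1,g-1)$ (rather than $\U^\sharp$) still yields a \emph{normal} subgroup of $\SU(g-1,g-1;R_q)$ and not merely of $\U^\sharp(g-1,g-1;R_q)$ — here one uses that $\SU$ is characteristic in $\U^\sharp$ and that the determinant of any element of the image is a root of unity, so conjugating back and forth preserves the $\SU$-condition. The second subtlety is verifying the hypotheses of Margulis's theorem for $\SU(g-1,g-1;R_q)$ — namely $\Q$-simplicity/irreducibility of the lattice and higher rank of the ambient group — which holds because $\SU(g-1,g-1)$ over $\Q[\zeta_q]$ restricts to a semisimple group whose real points have rank $\geq g-1\geq 2$, and the lattice is irreducible since the $\Q$-group is $\Q$-simple. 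Once these are in place, the non-centrality input from Step~2 finishes the proof.
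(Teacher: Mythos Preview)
Your overall strategy---normality of $\rho_q(\widehat{\mathcal I}_g^{(q)})$ inside $\U^\sharp(g-1,g-1;R_q)$, intersecting with $\SU$, and then invoking the Margulis Normal Subgroup Theorem---is exactly the skeleton of the paper's proof. The divergence is in how the finite/central case is ruled out.

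The paper does \emph{not} exhibit an explicit non-central element. Instead it argues structurally: assuming the image $A$ is finite, one sets $\mathcal M=\hat\rho_q^{-1}(\SU(g-1,g-1;R_q))$, lets $\mathcal K=\ker(\hat\rho_q|_{\mathcal M})$, and obtains an exact sequence displaying $\SU(g-1,g-1;R_q)\cong\mathcal M/\mathcal K$ as a finite extension of a quotient $\mathcal H/\Pi(\mathcal K)$ of a finite-index subgroup $\mathcal H<\Sp(2g,\Z)$. A \emph{second} application of Margulis NST (to $\mathcal H$) forces $\Pi(\mathcal K)$ to be finite, hence central in $\Sp(2g,\R)$; so $\SU(g-1,g-1;R_q)$ would be, up to finite groups, a lattice in $\mathrm{P}\Sp(2g,\R)$. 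This contradicts Margulis superrigidity, since $\SU(g-1,g-1;R_q)$ is an irreducible lattice in a semisimple group having no factor locally isomorphic to $\Sp(2g,\R)$.

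Your direct approach is in principle more elementary---it avoids superrigidity and the second NST---but the sketch you give for producing a non-central element is too quick. A separating twist $T_\gamma$ or a bounding-pair map $T_aT_b^{-1}$ whose curves are disjoint from $\alpha_g$ (and whose bounded subsurface misses $\alpha_g$) lifts to a product of separating twists, respectively BP maps, in $S_q$, and therefore acts \emph{trivially} on $H_1(S_q)$; these give nothing under $\rho_q$. One must choose the curves so that a single lift has nonzero image under $\iota$ in $\C^{2g-2}$ (e.g.\ a separating curve meeting $\alpha_g$ essentially, so its lifts are non-separating in $S_q$); then the lifted twist acts as a genuine unipotent transvection and is manifestly non-central of infinite order. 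This is precisely the computation Looijenga carries out in \cite{Lo}, and the paper indeed remarks that an earlier version of the argument followed \cite{Lo} before a referee suggested the present structural proof. So both routes are valid: yours trades superrigidity for an explicit calculation that you would still need to pin down, while the paper's is computation-free at the cost of heavier rigidity input.
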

\begin{remark}
  The proposition is also true for $g=2$ and $q=5$ or $q\geq 7$ (compare \cite{Lo} and the proof below).
Since we only need the $g \geq 3$ case, we do not give details.
\end{remark}
\begin{proof} By Theorem \ref{arithmeticity}, the image of 
the group $\widehat{\rm Mod}(S)^{(q)}$ under the representation $\hat \rho_q$
contains the group $\SU(g-1,g-1;R_q)$. Now for $g\geq 3$, the group
$\SU(g-1,g-1;R_q)$ is arithmetic and of $\mathbb{Q}$-rank at least two. 
Furthermore, it contains the group 
\[ A=\rho_q\left(\widehat{\Ig}^{(q)}\right)\cap \SU(g-1,g-1;R_q) \] as a normal subgroup.
Thus by Margulis's normal subgroup theorem, either $A$ is finite,
or it has finite index. Thus we have to show that the group $A$ is infinite.

Write ${\mathcal M}=\hat \rho_q^{-1}(\SU(g-1,g-1;R_q))$; 
since $U^\sharp(g-1,g-1;R_q)$ is a finite central extension of 
$\SU(g-1,g-1;R_q)$, ${\mathcal M}$ 
is a finite index normal subgroup 
of $\widehat{\rm Mod}(S)^{(q)}$ which intersect the Torelli group $\hat{\Ig}^{(q)}$ in a finite index
subgroup $\mathcal{P}$.

Let us denote by $\Pi:{\mathcal M}\to \Sp(2g,\mathbb{Z})$ the natural projection. As
${\mathcal M}$ is a subgroup of ${\rm Mod}(S)$ of finite index, 
$\Pi({\mathcal M})={\mathcal H}$ has finite index in $\Sp(2g,\mathbb{Z})$ and hence is a lattice
of the simple Lie group $\Sp(2g,\mathbb{R})$ of rank $g\geq 3$.
Taking the quotient of ${\mathcal M}$ by the kernel ${\mathcal K}<{\mathcal M}$ of the 
homomorphism $\hat \rho$ 
gives rise to an exact sequence
\begin{equation}\label{exact}
1\to {\mathcal P}/{\mathcal K}\cap {\mathcal P} \to {\mathcal M}/{\mathcal K}\to 
{\mathcal H}/\Pi({\mathcal K})\to 1
\end{equation}
where the group ${\mathcal M}/{\mathcal K}$ is isomorphic to $\SU(g-1,g-1;R_q)$. 

Now if the image of the group $\widehat{\Ig}^{(q)}$ under the representation $\rho_q$ is finite then 
${\mathcal P}/{\mathcal K}$ is a finite group. Thus
this sequence describes the arithmetic group $\SU(g-1,g-1;R_q)$ as an extension of 
the group ${\mathcal H}/\Pi({\mathcal K})$ by a finite group. This implies that 
${\mathcal H}/\Pi({\mathcal K})$ is infinite. Margulis's normal subgroup
theorem now shows that $\Pi({\mathcal K})$ is a finite normal subgroup of the higher
rank lattice ${\mathcal H}$.

Now ${\mathcal H}<\Sp(2g,\mathbb{R})$ is Zariski dense and therefore a finite normal subgroup of 
${\mathcal H}$ is normalized by the entire simple group $\Sp(2g,\mathbb{R})$. 
Then the group has to be central in $\Sp(2g,\mathbb{R})$ and hence either it is trivial,
or it equals $\pm 1$. As a consequence, the group 
${\mathcal H}/\Pi({\mathcal K})$ either equals a lattice
in $\Sp(2g,\mathbb{R})$, or a lattice in the quotient 
$\mathrm{P}\Sp(2g,\mathbb{R})$ 
of this group by its center.

To summarize, under the assumption that $\rho_g({\mathcal P})$ is not a
subgroup of  
$\SU(g-1,g-1;R_q)$ of finite index we conclude that the arithmetic group 
$\SU(g-1,g-1;R_q)$ is a finite extension of a lattice in $\mathrm{P}\Sp(2g,\mathbb{R})$. 
But $\SU(g-1,g-1;R_q)$ is an irreducible lattice in a higher rank
semi-simple Lie group $G$ which does not contain any factor 
locally isomorphic to the higher rank simple Lie group
$\Sp(2g,\mathbb{R})$. 
This contradicts Margulis' super-rigidity theorem for lattices.
\end{proof}

\begin{remark} 
Let $\epsilon:\mathbb{Q}[\zeta_q]\to \mathbb{Z}$ be the $\mathbb{Z}$-linear 
augmentation map defined by 
$\epsilon(\zeta_q)=1$ for ever $q$-th root of unity $\zeta_q$. 
It follows from the proof of Proposition \ref{prop:torelli-image-dense}
that the image of $\Ig$ under $\rho_q$ intersects 
 $\SU(g-1,g-1;R_p)$ in the finite index
normal subgroup defined as the preimage of the identity
under the map which associates to a matrix its coordinate-wise image under 
$\epsilon$. We do not know whether the image
coincides with this group. 
\end{remark}

In the following corollary, we view $\SU(g-1,g-1;R_q)$ as a 
countable subgroup of the
Lie group $\SU(g-1,g-1)$ (and not as an arithmetic group).

\begin{corollary}\label{zarsiki}
For $g\geq 3, q\geq 3$ 
the group $\rho_q\left(\widehat{\mathcal{I}}_g^{(q)}\right) \cap \SU(g-1,g-1) $
is Zariski dense in $\SU(g-1,g-1)$.
\end{corollary}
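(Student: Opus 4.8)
The plan is to deduce Zariski density directly from Proposition \ref{prop:torelli-image-dense}. First I would recall the basic fact that an arithmetic lattice is Zariski dense in the ambient (real) algebraic group: concretely, $\SU(g-1,g-1;R_q)$ is Zariski dense in $\SU(g-1,g-1)$. This is the Borel density theorem, valid because $\SU(g-1,g-1)$ is a connected semisimple Lie group with no compact factors and $\SU(g-1,g-1;R_q)$ is a lattice in it (viewed via the appropriate restriction of scalars, $\SU(g-1,g-1;R_q)$ embeds as an irreducible lattice in a product $\SU(g-1,g-1)\times(\text{compact factors})$, and projecting away the compact factors one still gets a dense image, hence a Zariski dense subgroup of the single noncompact factor $\SU(g-1,g-1)$). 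Alternatively one can invoke Borel's theorem that $G(\mathcal{O})$ is Zariski dense in $G$ for a semisimple group $G$ defined over a number field.

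Next, Proposition \ref{prop:torelli-image-dense} tells us that
\[ A=\rho_q\left(\widehat{\mathcal{I}}_g^{(q)}\right) \cap \SU(g-1,g-1) \]
is a \emph{finite index} subgroup of $\SU(g-1,g-1;R_q)$. A finite index subgroup of a Zariski dense subgroup of a connected algebraic group is itself Zariski dense: if $H_0\leq H$ has finite index and $\overline{H_0}$ denotes its Zariski closure, then $H$ is covered by finitely many cosets of $H_0$, so $\overline{H}$ is covered by finitely many translates of $\overline{H_0}$; since $\overline{H}=\SU(g-1,g-1)$ is irreducible (connected) it cannot be a finite union of proper closed subsets, so one of those translates, hence $\overline{H_0}$ itself, must be all of $\SU(g-1,g-1)$. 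Applying this with $H_0=A$ and $H=\SU(g-1,g-1;R_q)$ gives that $A$ is Zariski dense in $\SU(g-1,g-1)$, which is the assertion.

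There is essentially no obstacle here: the corollary is a formal consequence of the proposition together with two standard facts (Borel density for arithmetic groups, and Zariski density passing to finite index subgroups of groups dense in a connected group). The only point requiring minor care is the connectedness of $\SU(g-1,g-1)$ as a real algebraic group, which is classical for the special unitary groups of a Hermitian form (in contrast to $\U(g-1,g-1)$, which is why the statement is phrased for the $\SU$ rather than $\U^\sharp$), and ensuring that the restriction-of-scalars picture does not introduce a compact factor equal to the whole group — which it does not, since $\SU(g-1,g-1)$ is noncompact for $g\geq 2$.
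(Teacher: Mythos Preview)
Your proposal is correct and follows essentially the same route as the paper: both deduce the corollary from Proposition~\ref{prop:torelli-image-dense} together with the fact that $\SU(g-1,g-1;R_q)$ (hence any finite index subgroup of it) is Zariski dense in $\SU(g-1,g-1)$. The only cosmetic difference is that the paper splits into two cases according to whether $[\mathbb{Q}[\zeta_q]:\mathbb{Q}]\geq 4$ or $=2$: in the first case the projection of the arithmetic lattice to the noncompact factor $\SU(g-1,g-1)$ is dense in the ordinary topology, while in the second case the group is already a lattice in $\SU(g-1,g-1)$ and one invokes Borel density; your argument absorbs both cases into a single appeal to Borel density (or the restriction-of-scalars picture), which is equally valid.
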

\begin{proof} By Proposition \ref{prop:torelli-image-dense}, 
if the degree over $\mathbb{Q}$ of the number field
$\mathbb{Q}[\zeta_q]$ is at least four then 
$\rho_q\left(\widehat{\mathcal{I}}_g^{(q)}\right) \cap \SU(g-1,g-1) $
is a lattice in an algebraic group which has $\SU(g-1,g-1)$ as a factor.
In this case the image group 
is dense in the usual topology. 

Otherwise, as $q\geq 3$, 
the degree over $\mathbb{Q}$ of $\mathbb{Q}[\zeta_q]$ is two and
$\rho_q\left(\widehat{\mathcal{I}}_g^{(q)}\right) \cap \SU(g-1,g-1) $ is a lattice
in $\SU(g-1,g-1)$ and hence Zariski dense as well.
\end{proof}

\subsection{The action on subspaces: Part I}\label{sec:lagrangians1}
The core ingredient in the proof of the main theorem will be to
control the determinant of $B_q(\varphi)$ (introduced in
Section~\ref{sec:matrix}).  This will be done using random walks on
algebraic groups in Section~\ref{sec:conditions}. In this section we
describe how to detect this determinant, using the action of
$\U(g-1,g-1)$ on subspaces of $\C^{2g-2}$.


Recall that the exterior product of a basis of a $(g-1)$--dimensional subspace
of $\mathbb{C}^{2g-2}$ is a pure 
vector in $\wedge^{g-1}\mathbb{C}^{2g-2}$. Thus the action of 
$\U(g-1,g-1)$ on the Grassmannian of half-dimensional subspaces of 
$\mathbb{C}^{2g-2}$ is encoded in the natural representation of 
$\U(g-1,g-1)$ on $\wedge^{g-1}\mathbb{C}^{2g-2}$. 
This representation is irreducible (see Section \ref{sec:lagrangians2}).

Let $\{x_i,y_i\}_{i=1}^{g-1}$ denote a basis of $\C^{2g-2}$ with respect to which $\langle \cdot,\cdot\rangle$ takes the standard form 
(see Subsection \ref{subsec:rep}). Let $e,f\in \wedge^{g-1}\mathbb{C}^{2g-2}$ be defined by
\[e = x_1\wedge\cdots\wedge x_{g-1} \;\;\text{and}\;\; f=y_1\wedge \cdots \wedge y_{g-1}.\]

Recall that we want to control the lower left block in the image of our random element of $\widehat{\mathcal{I}}_g^{(q)}$ under $\rho_q$. The first observation is that this determinant appears as the $f$-coefficient of $\rho_q(\hat\varphi)e$. To this end, let $(\cdot,\cdot)$ denote the Hermitian 
inner product on $\wedge^{g-1}\C^{2g-2}$ corresponding to the Hermitian inner product on $\mathbb{C}^{2g-2}$ for which the 
basis $\{x_i,y_i\}_{i=1}^{g-1}$ is orthonormal.

\begin{lemma}\label{lem:determinant} Let $\varphi \in \Ig$ be arbitrary, and let 
$\hat\varphi \in \widehat{\mathcal{I}}_g^{(q)}$ be any lift of $\varphi$. Then
\[|(\rho_q(\hat\varphi)e,f)| = |\iota\det(B_q(\varphi))|.\]
\end{lemma}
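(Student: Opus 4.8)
The plan is to unwind the definitions and reduce the statement to a one-line determinant expansion in the exterior algebra. Recall first that $\rho_q(\hat\varphi)$ is, by construction, exactly the complex matrix $\iota M_q(\varphi)$ acting on $\C^{2g-2}$ in the basis $\{x_i,y_i\}_{i=1}^{g-1}$, which is the $\iota$-image of $\{\hat a_i,\hat b_i\}_{i=1}^{g-1}$. The induced action of $\rho_q(\hat\varphi)$ on $\wedge^{g-1}\C^{2g-2}$ is the usual exterior power, so $\rho_q(\hat\varphi)e = \bigwedge_{j=1}^{g-1}\bigl(\iota M_q(\varphi)\,x_j\bigr)$.

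Next I would feed in the block decomposition. By equation (\ref{Bmatrix}), transported from $S_\infty$ to $S_q$ via Lemma~\ref{lem:covering-map-compatibility}, for each $j<g$ we have $\hat\varphi_*(\hat a_j) = d_j + \sum_{i=1}^{g-1}(B_q(\varphi))_{ij}\,\hat b_i$ with $d_j\in D_q$. Applying $\iota$ coordinate-wise, this reads $\iota M_q(\varphi)\,x_j = u_j + \sum_{i=1}^{g-1}\iota\bigl((B_q(\varphi))_{ij}\bigr)\,y_i$ with $u_j\in\mathrm{span}_{\C}\{x_1,\dots,x_{g-1}\}$. Now expand $\bigwedge_{j}\bigl(u_j+\sum_i\iota(B_q(\varphi))_{ij}\,y_i\bigr)$ by multilinearity: any term that picks the $x$-part $u_j$ from at least one of the $g-1$ factors involves at most $g-2$ of the vectors $y_i$, hence is not a multiple of $f=y_1\wedge\cdots\wedge y_{g-1}$. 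So the only contribution to the $f$-component comes from choosing the $y$-part in every factor, and $\bigwedge_{j}\sum_i\iota(B_q(\varphi))_{ij}\,y_i = \det\bigl(\iota B_q(\varphi)\bigr)\,f = \iota\bigl(\det B_q(\varphi)\bigr)\,f$, the last equality because $\iota$ is a ring homomorphism.

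Finally I would invoke orthonormality: since $\{x_i,y_i\}$ is orthonormal for $(\cdot,\cdot)$ on $\C^{2g-2}$, the wedge monomials $e_{i_1}\wedge\cdots\wedge e_{i_{g-1}}$ over increasing index sets form an orthonormal basis of $\wedge^{g-1}\C^{2g-2}$, and both $e$ and $f$ are among them; therefore $(\rho_q(\hat\varphi)e,f)$ equals the $f$-coefficient computed above, namely $\iota(\det B_q(\varphi))$, and taking absolute values gives the claim. I do not expect a serious obstacle here; the two points requiring care are the convention in (\ref{Bmatrix}) — the columns of $B_q(\varphi)$ record the $\hat b$-components of $\hat\varphi_*(\hat a_j)$, which is exactly what makes the expansion produce $\det B_q(\varphi)$ and not its transpose — and the fact that although $\iota M_q(\varphi)$ is only defined up to a $q$-th root of unity for $\varphi\in\Ig$, it becomes honest after fixing the lift $\hat\varphi$, so the statement is correctly phrased in terms of $\hat\varphi$ and the absolute value absorbs the remaining ambiguity.
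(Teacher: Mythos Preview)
Your proof is correct and follows essentially the same approach as the paper: both expand the wedge product $\rho_q(\hat\varphi)e$ and read off the $f$-coefficient using orthonormality of the induced basis of $\wedge^{g-1}\C^{2g-2}$, identifying it as the determinant of the $y$-block. The paper writes this out explicitly as a Leibniz sum over $\mathfrak{S}_{g-1}$, whereas you invoke the identity $\bigwedge_j\sum_i A_{ij}y_i=\det(A)\,f$ directly, but this is only a cosmetic difference.
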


\begin{proof} 
First note that a lift $\hat\varphi$ is defined up to multiplication by a deck group element,
which under $\iota$ maps to multiplication by a complex number of absolute value $1$. Thus
both sides of the inequality are independent of the choice of lift. Similarly, we can choose our
preferred lift of $\varphi$ to compute $\iota B_q(\varphi)$.

Let
\[\mathcal{E}= \left\{ x_{i_1}\wedge\cdots\wedge x_{i_k} \wedge y_{j_1}\wedge\cdots\wedge y_{j_{g-1-k}}\mid \substack{\displaystyle{0\leq k\leq g-1,\; i_{1}<\ldots <i_k,}\\ \displaystyle{j_1<\ldots<j_{g-1-k}}}\right\}.\]
$\mathcal{E}$ is a orthonormal basis for $\wedge^{g-1}\C^{2g-2}$. Because $f$ is orthogonal to all elements of $\mathcal{E}$ not equal to it, we obtain
\begin{eqnarray*}
(\rho_q(\varphi)e,f) &=&  \sum_{i_1,\ldots,i_{g-1}=1}^{g-1} \left(\left(B_q(\varphi)\right)_{1,i_1}y_{i_1}\wedge\ldots\wedge \left(B_q(\varphi)\right)_{g-1,i_{g-1}}y_{i_{g-1}}, f \right) \\
&=& \sum_{i_1,\ldots,i_{g-1}=1}^{g-1} (y_{i_1}\wedge \ldots \wedge y_{i_{g-1}}, f)\prod_{j=1}^{g-1}\left(B_q(\varphi)\right)_{j,i_j} .\\
\end{eqnarray*}
The inner products in the terms satisfy
\[(y_{i_1}\wedge \ldots \wedge y_{i_{g-1}}, f) = \left\{
\begin{array}{ll} 
\varepsilon(i_1,\ldots,i_{g-1}) & \text{if the map}\; j\mapsto i_j\;\text{is bijective} \\
0 & \text{otherwise}
\end{array} \right. \]
where $\varepsilon(i_1,\ldots,i_{g-1})$ denotes the sign of the map $j\mapsto i_j$ viewed as a permutation. As such, we can view the sum above as a sum over elements of the symmetric group on $g-1$ letters $\mathfrak{S}_{g-1}$. We obtain:
\begin{eqnarray*}
(\rho_q(\varphi)e,f) &=& \sum_{\pi\in\mathfrak{S}_{g-1}} \varepsilon(\pi) \prod_{j=1}^{g-1}\left(B_q(\varphi)\right)_{j,\pi(j)},
\end{eqnarray*}
where $\varepsilon(\pi)$ denotes the sign of a permutation $\pi\in\mathfrak{S}_{g-1}$. The expression above is the Leibniz formula for the determinant of $B_q(\varphi)$.
\end{proof}

\section{Mahler measures}
\label{sec:algebra}
In this section we review some basic properties of the 
logarithmic Mahler measure of polynomials and
establish conditions which are
satisfied in the case that the polynomials $\det B_\infty(\varphi)$ from Section~\ref{sec:matrix} have 
logarithmic Mahler
measure zero.

\subsection{Mahler measure obstructions}
First, we collect some classical facts on integral polynomials in a single variable and their Mahler measures.
 
Let $P$ be a polynomial in one variable with integral coefficients. 
The \emph{(logarithmic) Mahler measure} of $P$ is defined as
\[ m(P) = \frac{1}{2\pi}\int_0^{2\pi} \ln(|P(\exp(i\theta))|)d\theta .\]
Note that $m(t^kP) = m(P)$ for all $k\in \Z$.

We let $M(P) = \exp(m(P))$ be the multiplicative Mahler measure of $P$. If we speak simply of
Mahler measure, we will usually mean the logarithmic Mahler measure.
If we
write $P(t) = a\prod_i(t-\alpha_i)$ with $\alpha_i\in \mathbb{C}$ 
then by Jensen's formula, we have
\[ M(P) = |a|\prod\max(1, |\alpha_i|).\]

For $n\in\mathbb{N}$, the $n^{th}$ \emph{cyclotomic
  polynomial} $\Phi_n$ in $t$ is the unique irreducible polynomial (over $\mathbb{Z}$) that is a
divisor of $t^{n}-1$ but not of $t^k-1$ for any $k<n$, $k\in \mathbb{N}$. As such, its roots are exactly the $n^{th}$ primitive roots of
unity. This in turn implies that
\[\deg(\Phi_n) = \varphi(n)\]
for all $n\in\mathbb{N}$, where $\varphi:\mathbb{N}\to\mathbb{N}$ denotes Euler's totient function.

The following proposition is a well-known application of a result of Kronecker \cite{Kr} (compare e.g. \cite[Theorem~1.31]{EW}).
\begin{proposition}\label{prop:mahler-measure-one}
  Let $P(t)$ be a polynomial in one variable with integral coefficients.
  Then $m(P(t)) \geq 0$, with equality if and only if $P(t)$ is a product
  of cyclotomic polynomials and powers of $t$.
\end{proposition}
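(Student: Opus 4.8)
The plan is to reduce everything to Jensen's formula, already quoted above, together with Kronecker's theorem on algebraic integers all of whose Galois conjugates lie in the closed unit disc.

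First I would prove the inequality $m(P)\ge 0$. Writing $P(t)=a\prod_i(t-\alpha_i)$ with $a\in\Z\setminus\{0\}$ and $\alpha_i\in\C$, Jensen's formula gives $M(P)=|a|\prod_i\max(1,|\alpha_i|)\ge|a|\ge 1$, so that $m(P)=\ln M(P)\ge 0$, with equality precisely when $|a|=1$ and $|\alpha_i|\le 1$ for every root $\alpha_i$. This already sets up both directions of the equality statement.

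For the ``if'' direction I would use that $m$ is additive on products, which is immediate from $\ln|P(e^{i\theta})Q(e^{i\theta})|=\ln|P(e^{i\theta})|+\ln|Q(e^{i\theta})|$, together with $m(t)=0$ and $m(\Phi_n)=0$: the latter holds because $\Phi_n$ is monic with all its roots on the unit circle, so $M(\Phi_n)=1$ by Jensen. Hence any product of cyclotomic polynomials and powers of $t$ has logarithmic Mahler measure zero. For the ``only if'' direction, assume $m(P)=0$. By the first step $P(t)=\pm t^k Q(t)$, where $Q(t)=\prod_{\alpha_i\ne 0}(t-\alpha_i)$ is (up to sign) $P/t^k$, hence a monic polynomial with integer coefficients whose roots are algebraic integers of absolute value $\le 1$. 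Each such root $\alpha$ has minimal polynomial dividing $Q$ in $\Q[t]$, hence in $\Z[t]$ by Gauss's lemma, so all Galois conjugates of $\alpha$ are again roots of $Q$ and thus also lie in the closed unit disc; Kronecker's theorem \cite{Kr} then forces every root of $Q$ to be a root of unity. I would finish by induction on $\deg Q$: choosing a root that is a primitive $n$-th root of unity, its minimal polynomial $\Phi_n$ divides $Q$ in $\Z[t]$ (Gauss again), and the complementary factor is once more monic and integral with all roots roots of unity, so the induction hypothesis applies and exhibits $Q$, hence $P$, as a product of cyclotomic polynomials and powers of $t$.

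The one genuine input is Kronecker's theorem; if a self-contained argument is wanted, one proves it by noting that for a nonzero algebraic integer $\alpha$ of degree $d$ with all conjugates of modulus $\le 1$, every power $\alpha^n$ is an algebraic integer of degree $\le d$ whose minimal polynomial has coefficients bounded (by binomial coefficients) in terms of $d$ alone, so these minimal polynomials take only finitely many values; thus $\alpha^m=\alpha^n$ for some $m>n$ and therefore $\alpha^{m-n}=1$. Beyond this, the proof is only bookkeeping with Jensen's formula and unique factorization in $\Z[t]$, so I do not expect any serious obstacle.
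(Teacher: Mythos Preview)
Your argument is correct and is exactly the standard route: Jensen's formula to characterise $m(P)=0$ as ``monic with all roots in the closed unit disc'', followed by Kronecker's theorem to force the nonzero roots to be roots of unity. The paper does not give its own proof at all; it simply records the proposition as a well-known consequence of Kronecker's theorem, citing \cite{Kr} and \cite[Theorem~1.31]{EW}, so your write-up is precisely the argument behind those citations.
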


From
the proposition above
we will derive a condition on polynomials of Mahler measure zero. Before we can prove it, 
we need two classical
results on cyclotomic polynomials and Euler's totient function. To
this end, let $\mathrm{c}(\Phi_m)$ denote the maximum of the absolute
values of the coefficients of $\Phi_m$. The following bound on
$\mathrm{c}(\Phi_m)$ is due to Maier.

\begin{proposition}\label{prop:cyclotomic-coefficients}\cite{Ma}
Let $\eta:\mathbb{N}\to\mathbb{R}$ be a function so that
\[\eta(n) \to \infty \]
as $n\to\infty$. Then for all but finitely many $m\in\mathbb{N}$ we have
\[\mathrm{c}(\Phi_m) \leq m^{\eta(m)}.\]
\end{proposition}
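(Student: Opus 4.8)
The plan is to bound $\mathrm{c}(\Phi_m)$ by an arithmetic quantity attached to $m$ that is small for a typical $m$, exploiting the description of $\Phi_m$ as a telescoping product of binomials $t^{d}-1$ together with the Hardy--Ramanujan fact that almost every integer has about $\log\log m$ prime factors; the sharp form quoted is, of course, Maier's \cite{Ma}, and for that we would ultimately appeal to that reference.

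First I would reduce to squarefree $m$. Since $\Phi_{pm}(t)=\Phi_m(t^{p})$ whenever $p\mid m$, iterating gives $\Phi_m(t)=\Phi_{\mathrm{rad}(m)}\big(t^{\,m/\mathrm{rad}(m)}\big)$, where $\mathrm{rad}(m)=\prod_{p\mid m}p$; hence the nonzero coefficients of $\Phi_m$ are precisely those of $\Phi_{\mathrm{rad}(m)}$, so $\mathrm{c}(\Phi_m)=\mathrm{c}(\Phi_{\mathrm{rad}(m)})$, while $\omega(\mathrm{rad}(m))=\omega(m)$ and $\mathrm{rad}(m)\le m$ (here $\omega(n)$ denotes the number of distinct prime factors). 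It therefore suffices to estimate $\mathrm{c}(\Phi_n)$ for squarefree $n$. For this I would use $\Phi_n(t)=\prod_{d\mid n}(t^{d}-1)^{\mu(n/d)}$: grouping the $2^{\omega(n)-1}$ factors with $\mu(n/d)=+1$ into a polynomial $P$ and the $2^{\omega(n)-1}$ factors with $\mu(n/d)=-1$ into $Q$ writes $\Phi_n=P\cdot Q^{-1}$. The coefficients of $P$ are bounded by $2^{2^{\omega(n)-1}}$ (each is a signed count of subsets), and expanding $Q(t)^{-1}=\pm\prod(1-t^{d})^{-1}$ as a power series makes the coefficient of $t^{N}$ a count of representations $N=\sum m_id_i$ with $m_i\ge 0$, hence at most $\binom{N+2^{\omega(n)-1}}{2^{\omega(n)-1}}$. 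Since $\deg\Phi_n<n$, forming the Cauchy product yields a bound of the shape $\log\mathrm{c}(\Phi_n)\ll 2^{\omega(n)}\log n$, and so $\log\mathrm{c}(\Phi_m)\ll 2^{\omega(m)}\log m$ for every $m$.

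Finally I would feed in the Hardy--Ramanujan--Tur\'an estimate $\omega(m)\le(1+\varepsilon)\log\log m$, valid for all $m$ outside a set of density zero: for such $m$ one gets $2^{\omega(m)}\le(\log m)^{(1+\varepsilon)\log 2}$, so $\log\mathrm{c}(\Phi_m)$ is at most a fixed power of $\log m$ times $\log m$, which is $\le\eta(m)\log m$ once $\eta(m)$ is large, giving $\mathrm{c}(\Phi_m)\le m^{\eta(m)}$. The main obstacle is exactly this exceptional set and the size of the exponent: the crude partition count only yields $\log\mathrm{c}(\Phi_m)\ll(\log m)^{1+\log 2+o(1)}$, which dominates $\eta(m)\log m$ only once $\eta(m)\gg(\log m)^{\log 2}$, and in fact $\mathrm{c}(\Phi_m)$ really does get as large as $\exp\!\big((\log m)^{\log 2-o(1)}\big)$ for infinitely many $m$, so no such elementary count can do better. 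Pushing the bound down to $m^{\eta(m)}$ for an arbitrary $\eta\to\infty$ forces one to use the \emph{cancellation} among the factors $t^{d}-1$ — in effect, uniform control of partial sums $\sum_{d\mid n,\ d\le x}\mu(n/d)$ — which is the substance of Maier's argument; for the precise statement we simply quote \cite{Ma}.
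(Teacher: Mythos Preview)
The paper does not prove this proposition at all: it is stated with the attribution \cite{Ma} and used as a black box. Your proposal ends in the same place --- after sketching an elementary approach and carefully explaining why it falls short, you too defer to Maier for the actual statement. In that sense there is no discrepancy to report.

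That said, it is worth being explicit that your sketch is \emph{not} a proof and could not be turned into one without essentially reproducing Maier's argument, as you yourself note. There are two independent obstructions, both of which you identify: (i) the Hardy--Ramanujan--Tur\'an normal order $\omega(m)\sim\log\log m$ holds only outside a set of density zero, whereas the proposition demands ``all but finitely many $m$''; and (ii) even on the typical set, the crude bound $\log\mathrm{c}(\Phi_m)\ll 2^{\omega(m)}\log m$ gives at best $\log\mathrm{c}(\Phi_m)\ll(\log m)^{1+\log 2+o(1)}$, which beats $\eta(m)\log m$ only when $\eta(m)\gg(\log m)^{\log 2}$, not for an arbitrary $\eta\to\infty$. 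Your remark that $\mathrm{c}(\Phi_m)$ genuinely reaches $\exp\big((\log m)^{\log 2-o(1)}\big)$ infinitely often (this is the lower bound side of Maier's work, and earlier Erd\H{o}s--Vaughan) confirms that no coefficient-counting argument ignoring sign cancellation in $\prod_{d\mid n}(t^d-1)^{\mu(n/d)}$ can succeed. So your write-up is best read as motivation for why the citation is necessary, which is more than the paper offers but not a substitute for it.
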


Of course, the set of $m\in\mathbb{N}$ for which the above bound above does not hold 
depends on the choice of function $\eta$. 

Finally, we need the following bound for the totient
function (see e.g. \cite[Theorem 327]{HW}). In fact, sharper estimates on $\varphi(n)$ are available, but the following is enough for our purpose.
\begin{proposition}\label{prop:totient-function} For every $\delta\in (0,1)$ we have:
\[\frac{\varphi(n)}{n^\delta} \to \infty\]
as $n\to\infty$.
\end{proposition}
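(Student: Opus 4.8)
The plan is to make the elementary identity $\varphi(n)/n=\prod_{p\mid n}(1-1/p)$ quantitative and to play it off against the extra factor $n^{1-\delta}$. Writing
\[ \frac{\varphi(n)}{n^\delta}=n^{1-\delta}\prod_{p\mid n}\Bigl(1-\tfrac1p\Bigr), \]
the whole problem reduces to bounding from below how fast the product over primes dividing $n$ can decay as a function of $n$. Fix $\delta\in(0,1)$, choose a threshold $P=P(\delta)$ (to be pinned down at the end), and split the product according to whether $p\le P$ or $p>P$. The primes $p\le P$ dividing $n$ contribute at least the fixed positive constant $c_1=c_1(\delta):=\prod_{p\le P}(1-1/p)>0$, since a product of a subset of numbers in $(0,1)$ only gets larger.

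For the large primes the key point is that $n$ cannot have many distinct prime factors exceeding $P$: if $p_1<\dots<p_r$ are the prime divisors of $n$ with $p_i>P$, then $P^r<p_1\cdots p_r\le n$, so $r<\log n/\log P$. As each factor $1-1/p$ with $p>P$ is at least $1-1/P<1$, decreasing the exponent only increases the value, so
\[ \prod_{\substack{p\mid n\\ p>P}}\Bigl(1-\tfrac1p\Bigr)\ \ge\ (1-1/P)^{r}\ \ge\ (1-1/P)^{\log n/\log P}\ =\ n^{-c},\qquad c=c(P):=\frac{\log\!\bigl(P/(P-1)\bigr)}{\log P}>0. \]
Multiplying the two contributions gives $\varphi(n)/n^\delta\ge c_1\,n^{1-\delta-c}$. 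Since $\log(P/(P-1))\sim 1/P$ while $\log P\to\infty$, we have $c(P)\to 0$ as $P\to\infty$; thus we may fix $P$ large enough (depending only on $\delta$) that $c(P)<1-\delta$. Then $\eta:=1-\delta-c(P)>0$ and $\varphi(n)/n^\delta\ge c_1 n^{\eta}\to\infty$, which is the claim.

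The only genuinely delicate step — and the one I would be most careful about — is the choice of $P$. The naive bound $\varphi(n)/n\ge\prod_{p}(1-1/p)$ is useless since $\varphi(n)/n$ really does tend to $0$ along primorials, so one must extract a small positive power of $n$ from the prime product, and this only works once $P$ is taken large relative to $\delta$ so that $c(P)<1-\delta$. (Alternatively one may simply invoke the classical estimate $\varphi(n)\gg n/\log\log n$, e.g. \cite[Theorem~327]{HW} or Mertens' theorem, from which the proposition is immediate because $n^{1-\delta}/\log\log n\to\infty$; the self-contained argument above has the minor virtue of avoiding Mertens.)
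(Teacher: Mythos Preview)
Your argument is correct. The paper itself does not supply a proof of this proposition at all: it simply quotes the classical lower bound for $\varphi(n)$ from \cite[Theorem~327]{HW} (essentially $\varphi(n)\gg n/\log\log n$) and observes that this suffices. You mention this route in your final parenthetical remark, and it is exactly what the paper does.

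Your main argument is a genuinely self-contained alternative: rather than invoking Mertens' theorem or the Hardy--Wright estimate, you split the prime factors of $n$ at a threshold $P$, absorb the bounded contribution from small primes into a constant, and control the large-prime part by the elementary count $r<\log n/\log P$. This yields $\varphi(n)/n^\delta\ge c_1(\delta)\,n^{\eta}$ with $\eta>0$, which is actually a bit stronger than what the citation gives (a positive power of $n$ rather than $n^{1-\delta}/\log\log n$). The trade-off is that the paper's one-line citation is shorter, while your argument is independent of any analytic number theory input. Either is perfectly adequate here, since the proposition is only used qualitatively in the proof of Proposition~\ref{prop:mahler-measure-constraint}.
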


We are now ready to prove our condition on polynomials.
\begin{proposition}\label{prop:mahler-measure-constraint} For every $\alpha>0$ there exists a finite set 
$K_\alpha\subset\mathbb{N}$ with the following property. If $P\in\Z[t]$ satisfies $m(P)=0$ then
\begin{itemize}
\item either $\Phi_k$ is a factor of $P$ for some $k\in K_\alpha$,
\item or for all $\xi\in\C$ with $\vert\xi\vert=1$ we have
\[ \vert P(\xi)\vert \leq \exp(\alpha \deg(P)). \]
\end{itemize}
\end{proposition}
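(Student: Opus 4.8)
The plan is to combine Proposition~\ref{prop:mahler-measure-one} with Maier's coefficient bound (Proposition~\ref{prop:cyclotomic-coefficients}) and the totient estimate (Proposition~\ref{prop:totient-function}). By Proposition~\ref{prop:mahler-measure-one}, any $P\in\Z[t]$ with $m(P)=0$ can be written as $P=\pm\, t^{j}\prod_{i=1}^{r}\Phi_{m_i}$ with $j\geq 0$, $r\geq 0$ and positive integers $m_1,\dots,m_r$ (not necessarily distinct; $r=0$ meaning $P=\pm t^{j}$). Since $\deg P=j+\sum_i\varphi(m_i)\geq\sum_i\varphi(m_i)$ and $|P(\xi)|=\prod_i|\Phi_{m_i}(\xi)|$ when $|\xi|=1$, it is enough to produce a finite set $K_\alpha\subset\mathbb{N}$ such that every cyclotomic polynomial $\Phi_m$ with $m\notin K_\alpha$ satisfies $|\Phi_m(\xi)|\leq\exp(\alpha\varphi(m))$ for all $\xi$ with $|\xi|=1$; then, for $P$ as above, either some $m_i\in K_\alpha$ (the first alternative), or all $m_i\notin K_\alpha$ and multiplying the estimates gives $|P(\xi)|\leq\exp(\alpha\sum_i\varphi(m_i))\leq\exp(\alpha\deg P)$ (the second alternative).

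To build $K_\alpha$, I would first fix, once and for all and independently of $\alpha$, a function $\eta:\mathbb{N}\to\R$ with $\eta(n)\to\infty$; concretely $\eta(n)=\log n$. Proposition~\ref{prop:cyclotomic-coefficients} then gives a finite set $E\subset\mathbb{N}$ with $\mathrm{c}(\Phi_m)\leq m^{\log m}$ for all $m\notin E$. For $|\xi|=1$, the triangle inequality yields the crude bound $|\Phi_m(\xi)|\leq(\varphi(m)+1)\,\mathrm{c}(\Phi_m)\leq(m+1)\,\mathrm{c}(\Phi_m)$, since $\Phi_m$ has $\varphi(m)+1$ coefficients, each of modulus at most $\mathrm{c}(\Phi_m)$, and $|\xi^k|=1$. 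Hence for $m\notin E$ we have $\log|\Phi_m(\xi)|\leq(\log m)^2+\log(m+1)$.

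Now I would invoke Proposition~\ref{prop:totient-function} with $\delta=1/2$, giving $n_0$ with $\varphi(m)\geq\sqrt m$ for $m\geq n_0$. As $(\log m)^2+\log(m+1)$ grows much more slowly than $\alpha\sqrt m$, there is an integer $N=N(\alpha)\geq n_0$ with $(\log m)^2+\log(m+1)\leq\alpha\sqrt m\leq\alpha\varphi(m)$ for all $m\geq N$. Together with the previous estimate this shows $|\Phi_m(\xi)|\leq\exp(\alpha\varphi(m))$ for every $m\notin E$ with $m\geq N$ and every $|\xi|=1$. Setting $K_\alpha=E\cup\{1,2,\dots,N\}$ (a finite set) finishes the construction, since $m\notin K_\alpha$ forces both $m\notin E$ and $m>N$; combined with the first paragraph this proves the proposition.

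The step I expect to be the real point is the third paragraph: Maier's theorem controls $\mathrm{c}(\Phi_m)$ only by $m^{\eta(m)}$ with $\eta(m)\to\infty$, so a bound exponential merely in $m$ would be useless here, and one genuinely needs $\varphi(m)$ to dominate a fixed positive power of $m$ (Proposition~\ref{prop:totient-function}) in order to absorb that growth into $\exp(\alpha\varphi(m))$. The choice of a slowly growing $\eta$ and of $\delta=1/2$ is exactly what makes the two estimates compatible; the remaining bookkeeping (the reduction via Proposition~\ref{prop:mahler-measure-one}, the triangle inequality, and the degree count) is routine.
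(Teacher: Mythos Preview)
Your proof is correct and follows essentially the same route as the paper: reduce via Proposition~\ref{prop:mahler-measure-one} to products of cyclotomic polynomials, bound $|\Phi_m(\xi)|$ on the unit circle using Maier's coefficient estimate together with the triangle inequality, and compare against $\alpha\varphi(m)$ via the totient lower bound with $\delta=1/2$. The only cosmetic difference is that the paper chooses $\eta(n)=\alpha\sqrt{n}/\log(n)-1$ (so the exceptional set from Maier already depends on $\alpha$ and the inequality $|\Phi_m(\xi)|\leq\exp(\alpha\sqrt{m})$ drops out directly), whereas you fix $\eta(n)=\log n$ once and absorb the $\alpha$-dependence into a separate cutoff $N(\alpha)$; both choices work equally well.
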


\begin{proof} 
  Proposition \ref{prop:totient-function} (applied to $\delta=1/2$)
  allows us to find some $n_0\in \mathbb{N}$ so that
\[ \varphi(n) > \sqrt{n},\]
for all $n\geq n_0$. 
 Define $\eta:\mathbb{N}\to\mathbb{R}$ by
\[\eta(n) = \alpha\cdot \frac{\sqrt{n}}{\log(n)}-1.\]
Using Proposition \ref{prop:cyclotomic-coefficients}, we obtain a finite set $K'\subset\mathbb{N}$ 
so that the coefficients of all $\Phi_m$ for $m\notin K'$ are bounded in absolute value 
by $m^{\eta(m)}$. This implies that for $m\notin K'$ we have
\[\vert\Phi_m(\xi)\vert \leq \varphi(m)\cdot m^{\eta(m)} \]
for all $\xi\in\C$ with $\vert \xi\vert=1$. The same holds trivially for the polynomial $P(t)=t$.

Define $K_\alpha\subset \mathbb{N}$ by
\[K_\alpha = K'\cup \{0,1,\ldots, n_0\}.\]
We claim that the proposition holds for this set. As such, we need to check the inequality 
in the statement of the proposition for all polynomials 
of Mahler measure zero not containing any factor $\Phi_k$ for some $k\in K_\alpha$. 

Let $P\in\Z[t]$ a polynomial with $m(P)=0$. Proposition \ref{prop:mahler-measure-one} tells us that we may write
\[P(t) = t^k\prod_{i=1}^r\Phi_{m_i}(t) \]
for some $m\in\mathbb{N}^r$ and $k,r\in\mathbb{N}$. Assume that $m_i\notin K_\alpha$ for all $i=1,\ldots,r$. We have
\[\deg(P) = k + \sum_{i=1}^r \varphi(m_i) >  \sum_{i=1}^{r} \sqrt{m_i}.\]
By the choice of $K_\alpha$, 
for all $\xi\in\C$ with $\vert \xi\vert =1$ we obtain that
\[\vert P(\xi)\vert \leq \prod_{i=1}^{r}\varphi(m_i)m_i^{\alpha \sqrt{m_i}/\log(m_i)-1}\leq 
\prod_{i=1}^r m_i^{\alpha\cdot \sqrt{m_i} / \log(m_i)}, \]
where we have used that $\varphi(n)\leq n$ for all $n\in\mathbb{N}$. Hence 
\[\vert P(\xi)\vert \leq \exp\left( \alpha \sum_{i=1}^r \sqrt{m_i}  \right)\leq \exp(\alpha\cdot \deg(P)).\]
\end{proof}


\section{Finishing the proof}
\label{sec:finishing}

In this section we finish the proof of our main theorems.  We will
again restrict to the case of random walks on the Torelli group. The case of
the homology stabilisers is completely analogous.

Let $S$ be a surface of genus $g\geq 3$ and let $\sigma:\pi_1(S) \to
\mathbb{Z}$ be a surjection as in Section~\ref{sec:setup} so that
${\rm ker}(\sigma)\supset K={\rm ker}(\pi_1(S)\to \pi_1(V))$.
Let $S_\infty, S_q$ be the covers considered in Section~\ref{sec:matrix} induced by $\sigma$.
For $\varphi \in \Ig$ and $q\in \mathbb{N}$ 
let $(N_\varphi)_q$ denote the cover of
$N_\varphi$ defined by the map $\pi_1(N_\varphi) \to \mathbb{Z}/q\mathbb{Z}$
which factors through $\sigma$. Say that \emph{$\varphi$ has property  $\mathrm{E}_\sigma$} if
  \[ \lim_{q\to \infty} \frac{\log | H_1((N_\varphi)_q;\mathbb{Z})_{\mathrm{tor}} |}{ q } \]
  exists and is positive.

Our goal is to prove:
\begin{theorem}\label{thm:main-theorem-plain}
Let $\mu$ be any finitely supported measure on the
  Torelli group $\Ig<{\rm Mod}(S)$ whose support generates
  $\Ig$ as a semigroup. Then we have
\[ \mu^{\ast n}\left(\left\{\varphi \in \Ig,  \,\varphi \text{ has property }\mathrm{E}_\sigma\right\} \right) \to 1 \]
as $n\to \infty$.


\end{theorem}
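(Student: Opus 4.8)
The plan is to combine three ingredients: (1) the algebraic-geometric description of the torsion growth via the Mahler measure of $\det B_\infty(\varphi)$; (2) the reduction, via Proposition~\ref{prop:mahler-measure-constraint}, of the condition ``$m(\det B_\infty(\varphi)) > 0$'' to a finite list of conditions detectable on the finite covers $S_q$, together with Lemma~\ref{lem:determinant} which expresses $\iota\det B_q(\varphi)$ as the matrix coefficient $(\rho_q(\hat\varphi)e,f)$; and (3) Benoist--Quint's theory of random walks on the algebraic groups $\SU(g-1,g-1)$, fed by the Zariski density statement of Corollary~\ref{zarsiki}.

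\textbf{Step 1: Torsion growth $\Leftrightarrow$ Mahler measure.} First I would recall (from the theory of Alexander polynomials, i.e.\ the Silver--Williams--type results cited in the introduction, combined with Proposition~\ref{prop:presentation-matrix}) that for $\varphi\in\Ig$ with $\det B_\infty(\varphi)\neq 0$, the limit defining property $\mathrm{E}_\sigma$ exists and equals $m(\det B_\infty(\varphi))$. By Proposition~\ref{prop:mahler-measure-one} this is always $\geq 0$, so property $\mathrm{E}_\sigma$ holds precisely when $\det B_\infty(\varphi)\neq 0$ and $m(\det B_\infty(\varphi)) > 0$, i.e.\ when $\det B_\infty(\varphi)$ is \emph{not} a product of cyclotomic polynomials and powers of $t$. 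So it suffices to show that the $n$-step walk lands, with probability tending to $1$, in the set of $\varphi$ for which $\det B_\infty(\varphi)$ is neither zero nor a unit times a product of cyclotomics.

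\textbf{Step 2: Detecting the bad set on finite covers.} Apply Proposition~\ref{prop:mahler-measure-constraint} with a small $\alpha>0$: it produces a finite set $K_\alpha\subset\mathbb{N}$ such that if $\det B_\infty(\varphi)$ has $m=0$ then either some $\Phi_k$, $k\in K_\alpha$, divides it, or $|\det B_\infty(\varphi)(\xi)|\leq \exp(\alpha\deg)$ for all $\xi$ on the unit circle. Via Lemma~\ref{lem:covering-map-compatibility} (specialisation $\ZZ\to\Z[G]$, $t\mapsto\zeta_q$) and Lemma~\ref{lem:determinant}, the value $\det B_\infty(\varphi)(\zeta_q)$ is, up to a unit, $(\rho_q(\hat\varphi)e,f)$. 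So the ``bad'' event for $\varphi$ (that property $\mathrm{E}_\sigma$ fails) is contained in a union, over finitely many moduli $q$ drawn from a set governed by $K_\alpha$, of the events ``$\Phi_k\mid\det B_\infty$'' and the single event ``$(\rho_q(\hat\varphi)e,f)$ is small and $\det B_\infty$ has bounded degree.'' I would bound each such event's $\mu^{*n}$-mass separately.

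\textbf{Step 3: Random-walk input.} For the matrix-coefficient events, use that by Corollary~\ref{zarsiki} the image $\rho_q(\widehat{\Ig}^{(q)})\cap\SU(g-1,g-1)$ is Zariski dense in $\SU(g-1,g-1)$, whose action on $\wedge^{g-1}\C^{2g-2}$ is irreducible and proximal (Section~\ref{sec:lagrangians2}). Then Benoist--Quint's limit theorems for random walks on reductive groups (the regularity / non-atomicity of the stationary measure on projective space, and the law of large numbers for $\log\|\rho_q(\hat\varphi_n)v\|$ with positive drift) give that $\mu^{*n}\big(\{|(\rho_q(\hat\varphi)e,f)|\leq \exp(\alpha n\cdot C)\}\big)\to 0$; one must check that $\deg\det B_\infty(\varphi_n)$ grows at most linearly in $n$ (clear, since the $t$-degrees of the entries of $M_\infty$ add up under composition and $\mu$ is finitely supported), so a suitably small $\alpha$ makes the two exponential rates incompatible. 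Similarly, divisibility by a fixed $\Phi_k$ forces $\rho_q(\hat\varphi)e$ (for $q$ with $k\mid q$) to lie on a proper subvariety invariant under nothing, hence has $\mu^{*n}$-mass $\to 0$ by the same non-atomicity/escape-of-subvarieties principle. The event $\det B_\infty(\varphi) = 0$ similarly corresponds to $\rho_q(\hat\varphi)e\perp f$ for all $q$, a proper Zariski-closed condition, killed in the limit.

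\textbf{Main obstacle.} The delicate point is the uniformity and the matching of exponential rates in Step 3: one needs the linear-in-$n$ degree bound on $\det B_\infty(\varphi_n)$ to be \emph{compatible} with a uniform positive drift $\lambda>0$ in $\frac{1}{n}\log|(\rho_q(\hat\varphi_n)e,f)|$ coming from Benoist--Quint, so that choosing $\alpha<\lambda/(\text{degree rate})$ actually makes the ``$|\det B_\infty(\xi)|\le\exp(\alpha\deg)$'' alternative a measure-zero-in-the-limit event --- and this must be done for each of the finitely many relevant $q$ at once, with the union bound. Verifying that Benoist--Quint applies (Zariski density into the right group, correct target being semisimple after projectivising, proximality of $\wedge^{g-1}$) and that the drift is genuinely positive (not merely nonnegative) is the technical heart; everything else is bookkeeping with the earlier lemmas.
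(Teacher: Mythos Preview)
Your overall strategy matches the paper's proof closely: reduce to positivity of the Mahler measure via Proposition~\ref{prop:criterion-exponential-growth}, invoke Proposition~\ref{prop:mahler-measure-constraint} to obtain the dichotomy, translate to matrix coefficients $(\rho_q(\hat\varphi)e,f)$ via Lemma~\ref{lem:determinant}, and kill each alternative using Benoist--Quint (Corollary~\ref{cor:probabilities-large-q} for the small-value alternative, non-atomicity of the stationary measure for the divisibility alternative). The linear degree bound and the choice of $\alpha$ below the Lyapunov exponent are exactly as in the paper's Proposition~\ref{prop:matrix-condition} and Corollary~\ref{cor:probabilities-large-q}.

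There is, however, a genuine gap in your treatment of the divisibility alternative. You write that ``divisibility by a fixed $\Phi_k$ forces $\rho_q(\hat\varphi)e$ (for $q$ with $k\mid q$) to lie on a proper subvariety''. This is not correct as stated: if $k\mid q$ with $k<q$, a primitive $q$-th root of unity is \emph{not} a root of $\Phi_k$, so $\Phi_k\mid\det B_\infty(\varphi)$ does not force $(\rho_q(\hat\varphi)e,f)=0$. What you need is $q=k$ itself, so that $\iota$ evaluates at a primitive $k$-th root of unity. This works for $k\geq 3$, and the paper does exactly that. But for $k\in\{1,2\}$ you cannot take $q=k$: Corollary~\ref{zarsiki} and Lemma~\ref{lem:proximal} require $q\geq 3$, so there is no Zariski-dense representation $\rho_1$ or $\rho_2$ available to feed into Benoist--Quint. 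Since $1$ and $2$ will certainly lie in $K_\alpha$ (the polynomials $t-1$ and $t+1$ have small degree and small coefficients), this case cannot be avoided.

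The paper handles $k\in\{1,2\}$ by a separate argument: it chooses auxiliary primes $q_1,q_2>2$ together with specific primitive roots $\zeta_{q_i}$ close to $1$ and $-1$ respectively, so that $|\Phi_i(\zeta_{q_i})|\cdot\max(1,|\Phi_j(\zeta_{q_i})|)\leq 1$. On the set $\mathcal{D}_i$ where $\Phi_i$ is the dominant factor among $\Phi_1,\Phi_2$ (and no $\Phi_k$ with $k\in K_{\alpha,\mu}$, $k\geq 3$, divides), the factor $\Phi_1^{m_1}\Phi_2^{m_2}$ evaluated at $\zeta_{q_i}$ has modulus $\leq 1$, while the remaining cofactor $Q$ still obeys the small-value bound from Proposition~\ref{prop:mahler-measure-constraint}; hence $|(\rho_{q_i}(\hat\varphi)e,f)|\leq\exp(\alpha n)$ and Corollary~\ref{cor:probabilities-large-q} applies at $q_i\geq 3$. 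You should incorporate this step; without it the cases $\Phi_1\mid\det B_\infty(\varphi)$ and $\Phi_2\mid\det B_\infty(\varphi)$ are not covered.
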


By Proposition~\ref{prop:presentation-matrix}, $B_\infty(\varphi)$ is a presentation matrix
for $H_1((N_\varphi)_\infty;\mathbb{Z})$. We can now apply \cite[Theorem~3.1]{R}
(where $m=1$, $\varphi=\sigma$, $\hat{X}=(N_\varphi)_\infty, X_q = (N_\varphi)_q$)
and obtain the following.
\begin{proposition}\label{prop:criterion-exponential-growth}
  With notation as above, if $m(\det B_\infty(\varphi)) > 0$ 
  then 
  \[ \lim_{q\to \infty} \frac{\log | H_1(X_q;\mathbb{Z})_{\mathrm{tor}} | }{ q } = m(\det B_\infty(\varphi))\]
  exists and is positive.
\end{proposition}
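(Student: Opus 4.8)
The plan is to deduce this from a general approximation theorem for torsion homology in towers of finite cyclic covers, namely \cite[Theorem~3.1]{R}, after matching up the algebraic data. By Proposition~\ref{prop:presentation-matrix}, the square matrix $B_\infty(\varphi)$ over $\ZZ=\Z[t,t^{-1}]$ is a presentation matrix for the $\ZZ$--module $H_1((N_\varphi)_\infty;\Z)$, so the order of its first elementary ideal is, up to a unit $t^k$, the polynomial $\det B_\infty(\varphi)$. Furthermore, reducing this presentation modulo the ideal $(t^q-1)$ --- equivalently, pushing forward along $\ZZ\to\Z[\Z/q\Z]$, which by Lemma~\ref{lem:covering-map-compatibility} is precisely the passage to the $q$--fold cyclic cover --- yields a presentation of $H_1(X_q;\Z)$. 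This places us exactly in the situation of \cite[Theorem~3.1]{R} with $m=1$, $\hat X=(N_\varphi)_\infty$, $X_q=(N_\varphi)_q$, and $\sigma$ the chosen surjection $\pi_1(N_\varphi)\to\Z$; the first step is to verify that these identifications meet its hypotheses.

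The content of that theorem --- which I would recall but not reprove --- is the classical computation, going back to Silver--Williams, of torsion growth in terms of a Mahler measure. Writing $\Delta=\det B_\infty(\varphi)\in\Z[t]$, normalised to a genuine polynomial (possible since the hypothesis $m(\det B_\infty(\varphi))>0$ forces $\Delta\neq 0$), one shows that
\[ \log\bigl|H_1(X_q;\Z)_{\mathrm{tor}}\bigr| \;=\; \sum_{\substack{\zeta^q=1\\ \Delta(\zeta)\neq 0}} \log|\Delta(\zeta)| \;+\; o(q), \]
and then dividing by $q$ and recognising a Riemann sum for $\frac{1}{2\pi}\int_0^{2\pi}\ln|\Delta(e^{i\theta})|\,d\theta$ gives the limit $m(\Delta)$. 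Positivity of the limit is then immediate from the hypothesis, and one may note that by Proposition~\ref{prop:mahler-measure-one} this hypothesis is equivalent to $\Delta$ not being a product of cyclotomic polynomials and powers of $t$.

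The one genuinely delicate point --- the reason a cited theorem is invoked rather than a bare Riemann-sum argument --- is the control of the error term when $\Delta$ has zeros on the unit circle (cyclotomic factors): for $q$ in the relevant arithmetic progressions the naive product acquires vanishing factors, and the contribution of roots of unity near such a zero to $\log|H_1(X_q;\Z)_{\mathrm{tor}}|$ must be shown to be $o(q)$ rather than merely bounded per term. This is exactly what \cite{R} (building on the knot-theoretic case) establishes, so in the write-up I would use it as a black box; everything else is bookkeeping to translate our matrices and covers into the notation of \cite{R}.
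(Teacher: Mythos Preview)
Your proposal is correct and matches the paper's approach exactly: the paper simply notes that $B_\infty(\varphi)$ is a presentation matrix for $H_1((N_\varphi)_\infty;\Z)$ by Proposition~\ref{prop:presentation-matrix} and then invokes \cite[Theorem~3.1]{R} with the identifications $m=1$, $\hat X=(N_\varphi)_\infty$, $X_q=(N_\varphi)_q$. Your additional commentary on the Riemann-sum heuristic and the delicate cyclotomic-root issue is accurate but goes beyond what the paper records, since it treats \cite{R} as a black box.
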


Thus, if $\varphi$ does not have $E_\sigma$, then we have that $m(\det B_\infty(\varphi))=0$. 
The rest of this section will be devoted to showing that with probability converging to one,
the logarithmic Mahler measure of $\det B_\infty(\phi))$ does not vanish.  
In order to be able to apply results on random walks on Lie groups later on, we first translate this into a condition on the image of (lifts of) $\varphi$ under the representations $\rho_m$ for some $m\in\mathbb{N}$.

\subsection{Matrix Conditions}\label{sec:conditions}
Recall from Lemma~\ref{lem:determinant} that the determinant of $B_q(\varphi)$ can be computed from the action of $\rho_q(\hat\varphi)$ on $\wedge^{g-1}\C^{2g-2}$.
Using the above, the condition in Proposition \ref{prop:mahler-measure-constraint} translates as follows. Here, $e$ and $f$ are as in Section~\ref{sec:lagrangians1}.
\begin{proposition}\label{prop:matrix-condition} 
  Let $\alpha>0$ and let $\mu$ be a finitely supported
  probability measure on $\Ig$ whose support generates $\Ig$ as a semigroup. There exists a finite set
  $K_{\alpha,\mu}\subset \mathbb{N}$ such that the following holds.

  Let $n$ be arbitrary and
  $\varphi\in\mathrm{supp}\left(\mu^{*n}\right)\subset\Ig$ with
  $m(\det B_\infty(\varphi))=0$. Then
\begin{itemize}
\item either $\Phi_k$ is a factor of $\det B_\infty(\varphi)$ for some $k\in K_{\alpha,\mu}$, and
  therefore $(\rho_k(\hat\varphi)e,f) = 0$ for that $k$ and any lift $\hat\varphi$ of $\varphi$ to $S_k$,
\item or for all $q\in\mathbb{N}$ we have
\[\ \vert (\rho_q(\hat\varphi)e,f)\vert \leq \exp(\alpha \cdot n) \]
for any lift $\hat\varphi$ of $\varphi$ to $S_q$.
\end{itemize}
\end{proposition}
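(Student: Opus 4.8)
The plan is to combine the Mahler–measure dichotomy of Proposition~\ref{prop:mahler-measure-constraint} with the identity — supplied by Lemma~\ref{lem:determinant} together with Lemma~\ref{lem:covering-map-compatibility} — relating $\det B_q(\varphi)$ to the matrix coefficient $(\rho_q(\hat\varphi)e,f)$. The single input not already available is a bound $\deg\det B_\infty(\varphi)\le Cn$ for $\varphi\in\mathrm{supp}(\mu^{*n})$, with $C=C(g,\mu)$ independent of $n$; producing this bound is the one real step, and I expect it to be the main obstacle.

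First I would establish the degree bound. By Lemma~\ref{lem:torelli-preserves-free} each element of the finite set $\mathrm{supp}(\mu)\subset\Ig$ admits a lift to $S_\infty$ preserving $F_\infty$, so the matrices $M_\infty(\psi)$, $\psi\in\mathrm{supp}(\mu)$, of Definition~\ref{def:matrices} are defined; multiplying each by a suitable power of $t$ (the only ambiguity in $M_\infty$), I may assume all of these finitely many matrices have entries that are ordinary polynomials of degree at most some $D=D(g,\mu)$. If $\varphi=\psi_1\cdots\psi_n$ with $\psi_i\in\mathrm{supp}(\mu)$, then the composition of the chosen lifts is a lift of $\varphi$, and with respect to it $M_\infty(\varphi)=M_\infty(\psi_1)\cdots M_\infty(\psi_n)$. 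Since multiplication of polynomial matrices adds degrees, every entry of $M_\infty(\varphi)$ — in particular every entry of its lower-left block $B_\infty(\varphi)$ — has degree at most $Dn$; hence, by the Leibniz formula, $\det B_\infty(\varphi)$ is a polynomial of degree at most $Cn$ with $C:=(g-1)D$.

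Next I would fix the constant: apply Proposition~\ref{prop:mahler-measure-constraint} with parameter $\alpha/C$ to get a finite set $K_{\alpha/C}\subset\mathbb{N}$, and set $K_{\alpha,\mu}:=K_{\alpha/C}$. (The finitely many small indices $k\le 2$ possibly in this set, for which the cover $S_k$ and the representation $\rho_k$ are degenerate, cause no difficulty: the identity below and the conclusion $(\rho_k(\hat\varphi)e,f)=0$ persist for them, the relevant group merely being a symplectic group.) For the dictionary, recall from Lemma~\ref{lem:covering-map-compatibility} that $B_q(\varphi)$ is the coordinatewise reduction of $B_\infty(\varphi)$ along $\ZZ\to\Z[G]$, while $\iota\colon\Z[G]\to\C$ sends a generator to a primitive $q$-th root of unity $\zeta_q$; since the determinant commutes with ring homomorphisms, $\iota\det B_q(\varphi)=\big(\det B_\infty(\varphi)\big)(\zeta_q)$. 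Combining this with Lemma~\ref{lem:determinant} gives, for every $q$ and every lift $\hat\varphi$ of $\varphi$ to $S_q$,
\[ |(\rho_q(\hat\varphi)e,f)| = |\iota\det B_q(\varphi)| = \big|\big(\det B_\infty(\varphi)\big)(\zeta_q)\big|. \]

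Finally I would read off the statement. Let $\varphi\in\mathrm{supp}(\mu^{*n})$ with $m(\det B_\infty(\varphi))=0$ and put $P:=\det B_\infty(\varphi)$, a polynomial of degree $\le Cn$ by the second step. Proposition~\ref{prop:mahler-measure-constraint}, applied with parameter $\alpha/C$, offers two alternatives. In the first, $\Phi_k\mid P$ for some $k\in K_{\alpha/C}=K_{\alpha,\mu}$; since $\zeta_k$ is a root of $\Phi_k$ we get $P(\zeta_k)=0$, whence $(\rho_k(\hat\varphi)e,f)=0$ for that $k$ and every lift, by the displayed identity. In the second, $|P(\xi)|\le\exp\!\big((\alpha/C)\deg P\big)\le\exp(\alpha n)$ for all $\xi$ on the unit circle, so $|(\rho_q(\hat\varphi)e,f)|=|P(\zeta_q)|\le\exp(\alpha n)$ for every $q\in\mathbb{N}$ and every lift. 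This is exactly the claimed dichotomy. Everything but the linear degree bound of the second paragraph is a formal combination of results already proved; the one point requiring care is precisely that the non-multiplicative block $B_\infty(\varphi)$ still inherits, from the multiplicativity of the full matrices $M_\infty$ (up to units), a degree growing only linearly in the number of steps $n$.
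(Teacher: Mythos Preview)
Your proof is correct and follows essentially the same approach as the paper: both first normalize the matrices $M_\infty(\psi)$ for $\psi\in\mathrm{supp}(\mu)$ to have polynomial entries, derive the linear degree bound $\deg\det B_\infty(\varphi)\le (g-1)d_\mu n$ from multiplicativity of $M_\infty$, apply Proposition~\ref{prop:mahler-measure-constraint} with the rescaled parameter $\alpha/((g-1)d_\mu)$, and then translate both alternatives via the identity $|(\rho_q(\hat\varphi)e,f)|=|\det B_\infty(\varphi)(\zeta_q)|$ coming from Lemmas~\ref{lem:determinant} and~\ref{lem:covering-map-compatibility}. Your parenthetical about the degenerate cases $k\le 2$ is an extra cautionary remark not present in the paper's proof of this proposition (the paper handles those indices separately in the proof of Theorem~\ref{thm:main-theorem-plain}).
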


\begin{proof} 
Let $S=\mathrm{supp}(\mu)\subset\Ig$ and let us choose lifts of all $s\in S$ so that the corresponding $M_\infty(s)$ are matrices whose coefficients are polynomials (as opposed to Laurent polynomials). Because $S$ is finite, this may be achieved by multiplying by some large power of $t$. 

We will first derive a degree bound on $\det(B_\infty(\varphi))$ for $\varphi\in\mathrm{supp}\left(\mu^{*n}\right)$. Let 
\[d_\mu = \max\left\{\deg\left(\left(M_\infty(s)\right)_{ij}\right)\vert\; i,j=1,\ldots,2g-2,\;s\in S\right\}.\]
Any $M_\infty(\varphi)$ for $\varphi\in\mathrm{supp}\left(\mu^{*n}\right)$ is obtained by multiplying at most $n$ elements of $\{M_\infty(s)\vert\; s\in S\}$. Hence the coefficients of $M_\infty(\varphi)$ have degree at most $d_\mu\cdot n$. The polynomial $\det(B_\infty(\varphi))$ is obtained by taking the determinant of a $(g-1)\times (g-1)$ block of $M_\infty(\varphi)$. As such, we obtain
\[\deg(\det(B_\infty(\varphi))) \leq (g-1)\cdot d_\mu \cdot n. \]

Set $\alpha'=\alpha/((g-1)d_\mu)$, and let $K_{\alpha'}\subset\mathbb{N}$ be the finite set
obtained from Proposition~\ref{prop:mahler-measure-constraint} applied with $\alpha'$. We
put $K_{\alpha, \mu} = K_{\alpha'}$.

\smallskip
Now suppose that $\varphi$ is so that $m(\det B_\infty(\varphi))=0$. Then by Proposition~\ref{prop:mahler-measure-constraint}, 
either $\Phi_k$ divides $\det(B_\infty(\varphi))$ for some $k\in K_{\alpha'}$ or
\[\vert \det(B_\infty(\varphi))(\xi)\vert \leq \exp(\alpha' \deg(\det(B_\infty(\varphi))))\]
for all $\xi$ with $\vert\xi\vert=1$.
In the first case, every primitive $k$--th root of unity is a zero of
$\Phi_k$ and hence of $\det(B_\infty(\varphi))$. Since $\iota\det(B_k(\varphi))$ is obtained from
$\det(B_\infty(\varphi))$ by evaluating at a primitive $k$--th root of unity, this implies the
claim (using Lemma~\ref{lem:determinant}).

In the latter case, the same argument yields that, for $\zeta_q$ a primitive $q$-th root of unity,
\[ \vert (\rho_q(\hat{\varphi})e, f) \vert = \vert \iota\det(B_q(\varphi))
\vert = \vert\det(B_\infty(\varphi))(\zeta_q)\vert\]\[ \leq \exp(\alpha' \deg(\det(B_\infty(\varphi))))
\leq \exp(\alpha n).\] 
\end{proof}

\subsection{Set-up and walks on $\widehat{\mathcal{I}}_g^{(q)}$}
\label{sec:lifting-walks}

Let $q\geq 3$, and $\zeta_q$ be a primitive $q$-th root of unity. Then 
$k=\mathbb{Q}[\zeta_q]$
is the cyclotomic field of degree $\varphi(q)$ over $\mathbb{Q}$. 
Recall from Subsection~\ref{subsec:rep} that we have a ring 
morphism 
\[\iota:\mathbb{Z}[G]\to k\subset \mathbb{C}\]
where $G$ is the cyclic group with $q$ elements.
It image is contained in the ring of integers of $k$. The morphism $\iota$ 
 depends on the choice of 
$\zeta_q$. 

By the results of Section~\ref{sec:c-side}, the ring morphism $\iota$ induces a representation
\[\rho_q:\widehat{\mathcal{I}}_g^{(q)} \to \U(g-1,g-1) \]
whose image is contained in the subgroup $U^\sharp (g-1,g-1)$ of all elements 
with determinant a square of a $q$-th root of unity. Here as before, 
$\widehat{\mathcal{I}}_g^{(q)}$ is the group of all lifts of elements in the Torelli group to
the surface $S_q$, and this group fits into the exact sequence
\[ 1 \to G \to \widehat{\mathcal{I}}_g^{(q)} \to \Ig \to 1\]

In the statement of our main theorem, we use a random walk on
the Torelli group $\Ig$. To apply the representations $\rho_q$ and the
criteria for Mahler measure $0$ above, we have to work
with random walks on a finite number of groups
$\widehat{\mathcal{I}}_g^{(q)}$. The rest of this section is devoted to
explaining how to pass from one to the other.

Let $\mu$ be a probability 
measure on $\Ig$ whose support is finite and generates $\Ig$ as a semigroup.
Define a measure $\zeta$ on 
$\widehat{\mathcal{I}}_g^{(q)}$ via
\[ \zeta(h) = |G|^{-1}\mu(hG). \]
Then $\zeta$ is a measure whose finite support generates $\widehat{\mathcal{I}}_g^{(q)}$ as a semigroup.


\begin{lemma}\label{lem:average}
Let $\pi:\widehat{\mathcal{I}}_g^{(q)}\to\Ig $ be the quotient map.
Then $\zeta^{*n}(\pi^{-1}A)=\mu^{*n}(A)$ for any $A\subset \Ig$.
\end{lemma}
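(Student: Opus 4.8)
The plan is to unwind both sides as explicit sums over words in the respective generating sets and observe that the fibration $\pi$ sets up a compatible bijection. First I would recall that, by definition, $\zeta(h) = |G|^{-1}\mu(\pi(h))$, so that for a single step the pushforward satisfies $\pi_*\zeta = \mu$: indeed for $g\in\Ig$ we have $(\pi_*\zeta)(g) = \zeta(\pi^{-1}(g)) = \sum_{h\in\pi^{-1}(g)} |G|^{-1}\mu(g) = |G|\cdot|G|^{-1}\mu(g) = \mu(g)$, using that the fibre $\pi^{-1}(g)$ is a $G$-coset of size $|G|$.

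Next I would upgrade this to convolutions. The key point is that $\pi$ is a group homomorphism, so $\pi_*(\zeta * \zeta) = (\pi_*\zeta)*(\pi_*\zeta)$; more generally $\pi_*(\zeta^{*n}) = (\pi_*\zeta)^{*n} = \mu^{*n}$. This is the standard fact that pushforward of measures under a homomorphism commutes with convolution, which one checks directly:
\[
\pi_*(\zeta^{*n})(g) = \sum_{\pi(h)=g}\ \sum_{h_1\cdots h_n = h}\prod_i \zeta(h_i)
= \sum_{g_1\cdots g_n = g}\ \sum_{\pi(h_i)=g_i}\prod_i\zeta(h_i)
= \sum_{g_1\cdots g_n=g}\prod_i \mu(g_i),
\]
where in the middle step I reindex the sum over factorisations of $h$ by first choosing the images $g_i=\pi(h_i)$ (whose product must be $g$ since $\pi$ is a homomorphism) and then choosing the lifts $h_i$ in each fibre independently. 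Then $\pi_*(\zeta^{*n})(g) = \mu^{*n}(g)$ by definition of the $n$-fold convolution on $\Ig$. Finally, for arbitrary $A\subset\Ig$, summing this identity over $g\in A$ gives $\zeta^{*n}(\pi^{-1}A) = \sum_{g\in A}\pi_*(\zeta^{*n})(g) = \sum_{g\in A}\mu^{*n}(g) = \mu^{*n}(A)$, which is the claim.

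The only mild subtlety — and the one place to be careful — is the reindexing in the displayed computation: one must make sure that as $(h_1,\dots,h_n)$ ranges over all factorisations of a fixed $h$, the tuple $(\pi(h_1),\dots,\pi(h_n))$ ranges over all factorisations of $g=\pi(h)$ and that for each such tuple $(g_1,\dots,g_n)$ the lifts $h_i\in\pi^{-1}(g_i)$ can be chosen freely and independently, with the product $h_1\cdots h_n$ automatically landing in $\pi^{-1}(g)$. This is exactly because $\pi$ is surjective with kernel $G$; there is no real obstacle, just bookkeeping. No properties of the Torelli group, of $G$, or of the representations are needed — the statement is a formal consequence of $\pi$ being a group extension and $\zeta$ being the normalised lift of $\mu$.
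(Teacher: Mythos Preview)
Your proof is correct and follows essentially the same approach as the paper: both unwind $\zeta^{*n}(\pi^{-1}A)$ as a sum over $n$-tuples and use that each fibre $\pi^{-1}(g_i)$ has size $|G|$ with constant $\zeta$-weight $|G|^{-1}\mu(g_i)$. Your presentation via the general identity $\pi_*(\zeta^{*n})=(\pi_*\zeta)^{*n}$ for group homomorphisms is a bit cleaner than the paper's explicit choice of coset representatives, but the underlying computation is the same.
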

\begin{proof}
Let $Z\subset \widehat{\mathcal{I}}_g^{(q)}$ be a complete set of coset
representatives for $\Ig$ (under the quotient map $\pi$).

Then
\begin{eqnarray*}
  \zeta^{*n}(\pi^{-1}A) &=&
  \sum_{h_{1},\ldots,h_{n}\in \widehat{\mathcal{I}}_g^{(q)},h_{1}\cdots h_{n}\in
    \pi^{-1}A}\zeta(h_{1})\cdots\zeta(h_{n}) \\
  &=&\sum_{a_{1},\ldots ,a_{n}\in Z,g_{1},\ldots,g_{n}\in G, a_{1}\cdots a_{n}G\in
    A}\zeta(a_{1}g_{1})\cdots\zeta(a_{n}g_{n}) \\
  &=& \sum_{a_{1},\ldots,a_{n}\in Z,g_{1},\ldots,g_{n}\in G, g_{1},\ldots,g_{n}G\in
    A}\frac{\mu(a_{1}g_1G)}{|G|}\cdots\frac{\mu(a_{n}g_nG)}{|G|} \\
  &=&\sum_{a_{1},\ldots,a_{n}\in Z,a_{1}\cdots a_{n}G\in A}
\mu(a_{1}G)\cdots \mu(a_{n}G) \\
  &=&\sum_{l_{1},\ldots,l_{k}\in
\Ig,l_{1}\cdots l_{n}\in A}\mu(l_{1})\cdots\mu(l_{n}) \\
 &=& \mu^{*n}(A).
\end{eqnarray*}
\end{proof}

\subsection{The action on subspaces: Part II}\label{sec:lagrangians2}

Before we can complete the proof, we need to collect some final facts on the
representation $\rho_q:\widehat{\mathcal{I}}_g^{(q)}\to U(g-1,g-1)$ and its action on subspaces.
Recall that we are interested in the representation 
\[\sigma_q:\widehat{\mathcal{I}}_g^{(q)}\to \GL(\wedge^{g-1}\C^{2g-2}).\] 

Recall that a representation $\rho$ of a group $G$ on a finite dimensional complex 
vector space $V$ is \emph{strongly irreducible} if it is irreducible and if 
$\rho(G)$ does not preserve any finite union of proper subspaces. 

The following proposition is standard.
\begin{proposition}\label{prop:irreducibility}
  The representation of $\SU(g-1,g-1)$ on 
  $\wedge^{g-1}\C^{2g-2}$ induced by the standard action on $\C^{2g-2}$
is strongly irreducible. 
 The image of $\SU(g-1,g-1)$ in
  $\mathrm{GL}(\wedge^{g-1}\C^{2g-2})$ is semi-simple and Zariski closed. 
\end{proposition}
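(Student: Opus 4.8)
The plan is to deduce everything from two classical inputs: the exterior power $\wedge^{g-1}$ of the standard representation of $\mathrm{SL}_{2g-2}(\C)$ is irreducible (it is the fundamental representation with highest weight $\omega_{g-1}$), and $\SU(g-1,g-1)$ is a real form of $\mathrm{SL}_{2g-2}(\C)$.

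First I would record that $\mathfrak{su}(g-1,g-1)$ is a real form of $\mathfrak{sl}_{2g-2}(\C)$, i.e. $\mathfrak{su}(g-1,g-1)\otimes_{\R}\C \cong \mathfrak{sl}_{2g-2}(\C)$. Hence the Zariski closure of the connected group $\SU(g-1,g-1)$ inside $\mathrm{SL}_{2g-2}(\C)$ is a complex algebraic subgroup whose Lie algebra contains $\mathfrak{su}(g-1,g-1)$, hence contains its complex span $\mathfrak{sl}_{2g-2}(\C)$; so $\SU(g-1,g-1)$ is Zariski dense in $\mathrm{SL}_{2g-2}(\C)$. Therefore any linear subspace of $\wedge^{g-1}\C^{2g-2}$ invariant under $\SU(g-1,g-1)$ is invariant under $\mathrm{SL}_{2g-2}(\C)$, and by irreducibility of $\wedge^{g-1}\C^{2g-2}$ as an $\mathrm{SL}_{2g-2}(\C)$-module it must be $\{0\}$ or all of $\wedge^{g-1}\C^{2g-2}$. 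This gives irreducibility of the $\SU(g-1,g-1)$-action.

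For strong irreducibility, suppose the image preserved a finite union $V_1\cup\cdots\cup V_k$ of proper nonzero subspaces, chosen with $k$ minimal. Then the image permutes $\{V_1,\dots,V_k\}$; since $\SU(g-1,g-1)$ is connected, so is its image, and a connected group acts trivially on a finite set, so each $V_i$ would be invariant, contradicting irreducibility. Hence no such finite union exists.

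It remains to see that the image is semisimple and Zariski closed. The map $\wedge^{g-1}$ is a morphism of algebraic groups $\mathrm{SL}_{2g-2}\to\GL(\wedge^{g-1}\C^{2g-2})$ with finite central kernel (the scalars $\lambda\,\mathrm{Id}$ with $\lambda^{g-1}=1$); hence the image is a quotient of the semisimple group $\SU(g-1,g-1)$ by a finite central subgroup, so it is semisimple — or, more weakly, it acts completely reducibly, being irreducible. For Zariski closedness one works in the category of real algebraic groups used by Benoist and Quint: $\SU(g-1,g-1)$ is the group of real points of the $\R$-form $\mathbf{G}$ of $\mathrm{SL}_{2g-2}$ cut out by the Hermitian form, $\wedge^{g-1}$ restricts to a morphism of real algebraic groups, and the image of a morphism of algebraic groups is Zariski closed. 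The only subtlety worth flagging — a point requiring care rather than a genuine obstacle — is that $\SU(g-1,g-1)$ is Zariski dense, not closed, inside $\mathrm{SL}_{2g-2}(\C)$ for the complex Zariski topology, so ``Zariski closed'' here is meant relative to the real structure (equivalently, after restriction of scalars $\C/\R$); one then checks that $\rho(\SU(g-1,g-1))$ exhausts the real points of its image, using connectedness of $\SU(g-1,g-1)$ and finiteness of the relevant Galois cohomology of the finite kernel.
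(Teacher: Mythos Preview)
Your proposal is correct and follows essentially the same route as the paper: irreducibility of $\wedge^{g-1}\C^{2g-2}$ for $\mathrm{SL}_{2g-2}(\C)$, the fact that $\mathfrak{su}(g-1,g-1)\otimes_{\R}\C=\mathfrak{sl}_{2g-2}(\C)$ to transfer irreducibility to the real form, connectedness for strong irreducibility, and the standard fact that images of algebraic group morphisms are Zariski closed together with simplicity of $\SU(g-1,g-1)$ for the last claim. Your extra care about the real versus complex Zariski topology is a legitimate point that the paper's proof glosses over with the single sentence ``images of algebraic groups under algebraic maps are Zariski closed''; your remark that the statement should be read in the real algebraic category (equivalently after restriction of scalars) is the correct way to make this precise.
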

\begin{proof}
  It is a classical fact that for any $k$, the representation of
  $\mathrm{SL}(d,\C)$ on $\wedge^k\C^d$ is irreducible (compare
  e.g. \cite[\S 15.2]{FH}). Now, $\SU(g-1,g-1)$ is a simple
  non-compact real Lie group with Lie algebra
  $\mathfrak{su}_{g-1,g-1}$. We have that
  $\mathfrak{su}_{g-1,g-1} \otimes \mathbb{C} = \mathfrak{sl}_{2g-2}$.
  This implies that restrictions of irreducible representations of
  $\mathrm{SL}(2g-2,\C)$
  to $\SU(g-1,g-1)$
  are irreducible (compare the discussion in \cite[\S 26.1,
  p. 439]{FH}).  Since $\SU(g-1,g-1)$
  is connected, this representation is in fact strongly irreducible.

Now the map
  $\SU(g-1,g-1) \to \mathrm{GL}(\wedge^{g-1}\C^{2g-2})$ is
  algebraic, and images of algebraic groups under algebraic maps are
  Zariski closed. Furthermore, as $\SU(g-1,g-1)$ is simple, the same holds true for the 
  image group.   
\end{proof}

Let $\Gamma$ be any countable group. 
A representation $\sigma:\Gamma\to \GL(V)$ on a complex vector space $V$ 
is called \emph{proximal}
if there exists a sequence $(g_n)\subset \sigma(\Gamma)\subset \GL(V)$ 
and a sequence
$(\lambda_n)\subset \mathbb{C}$ such that
\[\pi=\lim_{n}\lambda_ng_n\]
is an endomorphism of rank one. 
 
\begin{lemma}\label{lem:proximal}
Let as before 
$\sigma_q:\widehat{\mathcal{I}}_g^{(q)} \to \GL(\wedge^{g-1}\C^{2g-2})$. 
Then the image of the representation $\sigma_q$ is 
strongly irreducible and proximal. 
\end{lemma}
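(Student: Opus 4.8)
The plan is to deduce both properties from Corollary~\ref{zarsiki} and Proposition~\ref{prop:irreducibility}. Write $f$ for the algebraic representation $g\mapsto\wedge^{g-1}g$ of $\SU(g-1,g-1)$ on $\wedge^{g-1}\C^{2g-2}$, so that $\sigma_q=f\circ\rho_q$ on $\rho_q^{-1}(\SU(g-1,g-1))$. By Proposition~\ref{prop:irreducibility} the image $G_0:=f(\SU(g-1,g-1))$ is Zariski closed and acts strongly irreducibly. Since $\Delta:=\rho_q(\widehat{\mathcal{I}}_g^{(q)})\cap\SU(g-1,g-1)$ is Zariski dense in $\SU(g-1,g-1)$ by Corollary~\ref{zarsiki}, and the image of a Zariski dense subgroup under a homomorphism of algebraic groups is Zariski dense in the image (which is a closed subgroup), the subgroup $\Gamma:=f(\Delta)\subseteq\sigma_q(\widehat{\mathcal{I}}_g^{(q)})$ is Zariski dense in $G_0$.

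For strong irreducibility it then suffices to note that this property passes to Zariski dense subgroups: if $\Gamma$ preserved a finite union $W_1\cup\cdots\cup W_k$ of proper subspaces, chosen with no $W_i$ containing another $W_j$ so that the set $\{W_1,\dots,W_k\}$ is intrinsic, then the subgroup of $\GL(\wedge^{g-1}\C^{2g-2})$ permuting this finite set of subspaces would be Zariski closed and would contain $\Gamma$, hence all of $G_0$ --- contradicting Proposition~\ref{prop:irreducibility}. So $\Gamma$, and therefore the larger group $\sigma_q(\widehat{\mathcal{I}}_g^{(q)})$, is strongly irreducible.

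For proximality it suffices to exhibit a proximal element in $\sigma_q(\widehat{\mathcal{I}}_g^{(q)})$. I would first check that $G_0$ contains one: in the basis $\{x_i,y_i\}_{i=1}^{g-1}$ of Subsection~\ref{subsec:rep}, the diagonal transformation $a$ with $a x_i=\lambda_i x_i$ and $a y_i=\lambda_i^{-1}y_i$, for fixed reals $\lambda_1>\cdots>\lambda_{g-1}>1$, lies in $\SU(g-1,g-1)$, and every eigenvalue of $f(a)$ on $\wedge^{g-1}\C^{2g-2}$ is a product of $g-1$ of the numbers $\lambda_i,\lambda_i^{-1}$; the unique one of maximal modulus is $\lambda_1\cdots\lambda_{g-1}$, with one-dimensional eigenspace spanned by $e=x_1\wedge\cdots\wedge x_{g-1}$, so $f(a)$ is proximal. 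Thus $G_0$ is a strongly irreducible, proximal subgroup of $\GL(\wedge^{g-1}\C^{2g-2})$, and $\Gamma$ is Zariski dense in it; by the standard fact that a Zariski dense subgroup of a strongly irreducible proximal subgroup of $\GL(V)$ again contains a proximal element (Goldsheid--Margulis; see also \cite{BQ}), $\Gamma$ contains a proximal element, hence so does $\sigma_q(\widehat{\mathcal{I}}_g^{(q)})$.

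The one genuinely non-formal step is this last invocation: proximal elements do not form a Zariski open subset, so proximality cannot be transferred from $G_0$ to its Zariski dense subgroup by a bare genericity argument, and one must use the Goldsheid--Margulis type statement, whose proof combines the strong irreducibility established above with a dynamical argument on the relevant Grassmannian. Everything else --- the Zariski density bookkeeping, the subspace-stabiliser argument for strong irreducibility, and the eigenvalue computation for $a$ --- is routine.
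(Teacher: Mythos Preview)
Your proof is correct and follows essentially the same route as the paper: both use Corollary~\ref{zarsiki} together with Proposition~\ref{prop:irreducibility} to transfer strong irreducibility from $\SU(g-1,g-1)$ to the Zariski dense subgroup $\sigma_q(\widehat{\mathcal{I}}_g^{(q)})$, and both exhibit a diagonal element with real eigenvalues $\lambda_1>\dots>\lambda_{g-1}>1$ as a source of proximality. The only difference is that the paper simply asserts that such a proximal element can be found in the image ``since this image is Zariski dense,'' whereas you correctly observe that proximality is not a Zariski open condition and explicitly invoke the Goldsheid--Margulis result to justify this transfer --- so your version is in fact more careful on this point.
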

\begin{proof}
  By Corollary~\ref{zarsiki} the image of $\rho_q$
  intersects $\SU(g-1,g-1)$ in a Zariski dense subgroup. 
  Thus, strong irreducibility of $\sigma_q$ follows from Proposition~\ref{prop:irreducibility}.

  To show that $\sigma_q$ is proximal, it suffices to find 
  an element $A$ in the image of $\widehat{\mathcal{I}}_g^{(q)}$
	whose action on $\wedge^{g-1}\C^{2g-2}$ is proximal.
  Now any matrix $A\in \SU(g-1,g-1)$ as a $(2g-2)\times (2g-2)$-matrix with
	real eigenvalues 
  $\lambda_1>\dots >\lambda_{g-1}>1>\lambda_{g-1}^{-1}>\dots >\lambda_1^{-1}$ 
	will do, and we can find such a matrix in the image of $\widehat{\mathcal{I}}_g^{(q)}$
  since this image is Zariski dense in $\SU(g-1,g-1)$ by Corollary~\ref{zarsiki} again.
\end{proof}

\subsection{Random walks on $\widehat{\mathcal{I}}_g^{(q)}$}~\label{sec:random-walks}

In Proposition~\ref{prop:matrix-condition}, for a choice of a
positive number $\alpha>0$ we formulated
two events for a random walk on $\Ig$ 
and proved that at least one of these events occurs if 
the logarithmic Mahler measure of the Alexander polynomial of the 
hyperbolic three-manifold defined by an element of the walk
vanishes. 
Our end game will be to find a number $\alpha$ so that the probability of any one of
these two events occurring tends to zero with the step-length of the 
random walk. 

To find such a number $\alpha>0$ we use 
results of Benoist and Quint \cite{BQ,BQ2}. 
Let us fix a finitely supported probability measure $\mu$ on $\widehat{\mathcal{I}}_g^{(q)}$
whose support generates $\widehat{\mathcal{I}}_g^{(q)}$ as a semigroup.
Later, this measure will be obtained from a measure on the Torelli group $\Ig$ using the procedure described in Section~\ref{sec:lifting-walks}, but for this section this is not important.
This set up gives rise to a one-sided Bernoulli space $B$ with alphabet $\widehat{\mathcal{I}}_g^{(q)}$
defined by:
\[B = (\widehat{\mathcal{I}}_g^{(q)})^\mathbb{N},\]
The $\sigma$-algebra on this space is generated by \emph{cylinder sets}
\[C(\varphi_1,\ldots,\varphi_n) = \{b\in B\mid\; b_i=\varphi_i\;\forall\;i=1,\ldots,
n\}, \]
for $\varphi_1,\ldots,\varphi_n\in \widehat{\mathcal{I}}_g^{(q)}$. We define a shift invariant
probability measure $\beta$ on $B$ by
\[\beta = \mu^{\otimes \mathbb{N}*}. \]
Observe that by definition we have for a subset $X \subset \widehat{\mathcal{I}}_g^{(q)}$
\[ \mu^{\ast n}(X) = \beta\left( \bigcup_{\substack{\varphi_1, \ldots, \varphi_n\in\widehat{\mathcal{I}}_g^{(q)}\\ \varphi_1\cdots\varphi_n \in X}}C(\varphi_1, \ldots, \varphi_n) \right) \]

The natural Hermitian inner product on $\mathbb{C}^{2g-2}$ for which the basis 
$x_i,y_i$ is orthogonal induces an inner product $(\cdot ,\cdot)$ 
on $V=\wedge^{g-1}\C^{2g-2}$. We denote by $\Vert \cdot \Vert$ the corresponding norm, 
and we let $\Vert A\Vert$ be the operator norm of a
matrix
$A\in \GL(V)$ with respect to this norm. 

We will use the following result by Benoist and Quint, which is a combination of
Theorem 4.28(b) and Theorem 4.31 of \cite{BQ2}.
In its formulation, we use as before the special point 
$e=x_1\wedge\dots \wedge x_{g-1}\in \wedge^{g-1}\mathbb{C}^{2g-2}$.

\begin{theorem}\label{theorem:law-large-numbers}\emph{(The law of large numbers)}
For a fixed finitely supported probability measure $\mu$ on
$\widehat{\mathcal{I}}_g^{(q)}$ and for fixed $q\geq 7$ or for $q=5$, 
there exists a number $\lambda=\lambda(\mu,q)>0$ 
such that for $\beta$-almost every $b\in B$, one has
\[ \frac{1}{n}\log \Vert \rho_q(b_n\cdots b_1)\cdot e\Vert \to \lambda. \]
Furthermore, this convergence also holds in $\mathrm{L}^1(B,\beta)$. That is, the
functions $L_n:B\to\mathbb{R}$ defined by
\[L_n(b) = \frac{1}{n}\log \Vert \rho_q(b_n\cdots b_1)\cdot e\Vert,\]
converge in $\mathrm{L}^1(B,\beta)$ to the constant function $\lambda(\mu,q)$ as $n\to\infty$.
\end{theorem}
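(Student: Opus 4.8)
The plan is to deduce this statement directly from the two cited theorems of Benoist--Quint, so the real work is to match the present situation to their hypotheses and to add the short argument that upgrades ``$\lambda\geq 0$'' to ``$\lambda>0$''. First I would assemble the input data. The measure $\mu$ on $\widehat{\mathcal{I}}_g^{(q)}$ is finitely supported, so its pushforward $(\sigma_q)_*\mu$ on $\GL(\wedge^{g-1}\C^{2g-2})$ is compactly supported, and in particular has finite moments of every order --- far more than the finite--first--moment hypothesis needed in \cite{BQ2}. Since $\mathrm{supp}(\mu)$ generates $\widehat{\mathcal{I}}_g^{(q)}$ as a semigroup and $\sigma_q$ is a homomorphism, the subsemigroup of $\GL(\wedge^{g-1}\C^{2g-2})$ driving the linear random walk is the group $\Gamma := \sigma_q(\widehat{\mathcal{I}}_g^{(q)})$, whose Zariski closure is, up to a finite central factor, the image of $\SU(g-1,g-1)$ under $\wedge^{g-1}$ by Corollary~\ref{zarsiki}.

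Next I would carry out the main steps. By Lemma~\ref{lem:proximal} the representation $\sigma_q$ is strongly irreducible and proximal, i.e.\ $\Gamma$ acts strongly irreducibly on $\wedge^{g-1}\C^{2g-2}$ and contains an element with a simple dominant eigenvalue; this is exactly the input required by \cite[Theorem~4.28(b)]{BQ2}. That theorem then produces the constant $\lambda = \lambda(\mu,q)$ and yields, for $\beta$--almost every $b$ and for \emph{every} nonzero vector in $\wedge^{g-1}\C^{2g-2}$ --- in particular for the fixed vector $e = x_1\wedge\cdots\wedge x_{g-1}$ --- the convergence $\frac{1}{n}\log\|\rho_q(b_n\cdots b_1)\cdot e\|\to\lambda$; strong irreducibility is precisely what rules out an exceptional set of starting vectors. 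Convergence of the functions $L_n$ in $\mathrm{L}^1(B,\beta)$ is then \cite[Theorem~4.31]{BQ2}, whose integrability hypotheses are again supplied by compactness of $\mathrm{supp}((\sigma_q)_*\mu)$.

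The one genuinely substantive point --- the main obstacle, such as it is --- is positivity: $\lambda>0$ and not merely $\lambda\geq 0$. For this I would use that $\rho_q(\widehat{\mathcal{I}}_g^{(q)})\subset \U^\sharp(g-1,g-1)$, so every element of $\Gamma$ has determinant a root of unity on $\C^{2g-2}$ and hence also on $\wedge^{g-1}\C^{2g-2}$; therefore $\sum_i\lambda_i=0$ for the full Lyapunov spectrum $\lambda=\lambda_1\geq\lambda_2\geq\cdots$ of the walk on $\wedge^{g-1}\C^{2g-2}$. For $g\geq 3$ one has $\dim\wedge^{g-1}\C^{2g-2}=\binom{2g-2}{g-1}\geq 6>1$, and proximality together with strong irreducibility gives the strict gap $\lambda_1>\lambda_2$ (simplicity of the leading exponent, \cite[Theorem~4.28(b)]{BQ2}); a spectrum summing to zero with a strictly largest first term forces $\lambda=\lambda_1>0$. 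Equivalently one can observe that $\Gamma$ is unbounded --- it is Zariski dense in the non-compact group $\SU(g-1,g-1)$ by Corollary~\ref{zarsiki}, and Lemma~\ref{lem:proximal} even exhibits a genuinely proximal element, which has an eigenvalue of modulus $>1$ --- and invoke the Furstenberg positivity criterion. Beyond this, the argument is bookkeeping: the substantive content (strong irreducibility, proximality, Zariski density) is already assembled in Lemma~\ref{lem:proximal}, Corollary~\ref{zarsiki} and Proposition~\ref{prop:irreducibility}, and what remains is to confirm that these are exactly the hypotheses of \cite{BQ2} and to track the determinant for positivity.
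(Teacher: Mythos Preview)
Your proposal is correct and follows essentially the same route as the paper: verify via Lemma~\ref{lem:proximal} that $\sigma_q$ is strongly irreducible and proximal, then invoke Theorems~4.28(b) and~4.31 of \cite{BQ2}. The paper's own proof is a two-sentence appeal to these results and does not spell out the positivity argument for $\lambda$; your explicit treatment of $\lambda>0$ via the zero-sum Lyapunov spectrum (or Furstenberg's criterion) is a welcome addition rather than a departure.
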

\begin{proof}
The results of Benoist and Quint are valid for any random walk on $\mathrm{GL}(V)$ whose
support generates (as a semigroup) a subgroup $\Gamma$ of $\mathrm{GL}(V)$ so that 
the standard representation of $\Gamma$ on $V$ is strongly irreducible and proximal.
That these properties hold true for the representations $\sigma_q$ 
was shown in Lemma  \ref{lem:proximal}. 
\end{proof}

It's important to stress here that 
$\lambda(\mu,q)$ depends only on $\mu$ and $q$, but not on the sample path $b\in B$. 

The following result about random walks on projective spaces is also due to Benoist
and Quint \cite{BQ}.
In its formulation, convolution of measures in $\P(\C^{d})$ is via the orbit map for
the action
of $\GL(\C^{d})$. 
\begin{proposition}\label{thm:bqproj}\cite[Theorem 1.1 (ii)]{BQ}
Let $\mu$ be a measure on $\GL(\C^{d})$ such that the Zariski closure of the
semi-group generated by the support of $\mu$ is semi-simple.
Let $x\in \P(\C^{d})$. Then the convolutions $\mu^{*n}\star \delta_{x}$ converge to
a $\mu$-stationary measure $\nu$ on $\P(\C^{d})$.
\end{proposition}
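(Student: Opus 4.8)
This is \cite[Theorem~1.1(ii)]{BQ}, so the plan is simply to cite it; still, let me record the proof one would reproduce if needed and point out where the semi-simplicity hypothesis is used.

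First I would establish existence of a $\mu$-stationary measure on $\P(\C^d)$. Since $\P(\C^d)$ is compact and metrizable, the space of probability measures on it is weak-$*$ compact and convex, and the averaging operator $\eta\mapsto\mu*\eta$ (convolution via the orbit map) is affine and weak-$*$ continuous; hence any weak-$*$ accumulation point of the Ces\`aro averages $\frac1n\sum_{k=0}^{n-1}\mu^{*k}\star\delta_x$ is fixed by it, i.e.\ $\mu$-stationary. Call such a measure $\nu$. I would then invoke the Furstenberg-type rigidity for linear random walks: when the Zariski closure $G$ of the semigroup generated by $\mathrm{supp}(\mu)$ is semi-simple, such a $\nu$ is unique. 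The mechanism is that a stationary measure on $\P(\C^d)$ must be carried, up to the $G$-action, by a minimal closed invariant set, and for semi-simple $G$ these are exactly the orbits of the (partial) flag directions singled out by the Lyapunov filtration, on which the stationary measure is pinned down.

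The heart of the matter — and the step I expect to be the main obstacle — is a contraction statement on the one-sided Bernoulli space $B=\GL(\C^d)^{\mathbb N}$ with $\beta=\mu^{\otimes\mathbb N}$: there is a measurable boundary map $\xi\colon B\to\P(\C^d)$, equivariant in the sense that $\xi(g_1,g_2,\dots)=g_1\cdot\xi(g_2,g_3,\dots)$, such that for $\beta$-almost every $b=(g_1,g_2,\dots)$ and every $x$ outside a proper subspace depending measurably on $b$ one has $g_1g_2\cdots g_n\cdot x\to\xi(b)$. (If $G$ is semi-simple but not proximal one first runs this on the relevant flag variety, using that the top Lyapunov block separates, and then pushes forward to $\P(\C^d)$; in the proximal case it is the classical statement that $g_1\cdots g_n$ becomes projectively rank-one.) This is precisely where semi-simplicity enters, and reproving it is the technical core of \cite{BQ}. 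Granting it, I would conclude as follows: shift-invariance of $\beta$ together with the equivariance of $\xi$ shows $\xi_*\beta$ is $\mu$-stationary, hence equals the unique $\nu$; and for a fixed $x$, strong irreducibility forces $x$ to avoid the $b$-dependent exceptional subspace for $\beta$-almost every $b$, so $g_1\cdots g_n x\to\xi(b)$ almost surely. Since $\mu^{*n}\star\delta_x$ is by definition the law of $g_1\cdots g_n x$, testing against a bounded continuous function and applying dominated convergence yields $\mu^{*n}\star\delta_x\to\xi_*\beta=\nu$ weakly, which is the assertion.

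Finally I would note that in the use made of this proposition in the present paper the hypothesis is comfortably met: by Proposition~\ref{prop:irreducibility} the image of $\SU(g-1,g-1)$ in $\GL(\wedge^{g-1}\C^{2g-2})$ is semi-simple and Zariski closed, and by Lemma~\ref{lem:proximal} the representation $\sigma_q$ is strongly irreducible and proximal, so only the (easier) proximal case of the contraction lemma is actually needed.
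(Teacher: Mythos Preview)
The paper does not prove this proposition at all; it simply quotes it verbatim as \cite[Theorem~1.1(ii)]{BQ} and uses it as a black box. Your proposal correctly identifies this at the outset, and the additional sketch you provide (existence via compactness, uniqueness via the Furstenberg boundary map and the contraction lemma of Benoist--Quint) goes well beyond what the paper contains. So there is nothing to compare: your ``plan'' of citing the result is exactly what the paper does, and the extra outline is a bonus.

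One small caveat on the sketch itself: in the final step you invoke strong irreducibility to ensure that a fixed $x$ almost surely avoids the exceptional subspace, but the proposition as stated only assumes the Zariski closure is semi-simple, with no irreducibility hypothesis. In that generality the limiting measure $\nu$ may depend on $x$ (it need not be unique), and the argument has to be run on each isotypic component or via the appropriate flag variety, as you allude to earlier. This is not a gap for the paper's purposes---as you correctly note, the application in Section~\ref{sec:finishing} only uses the proximal, strongly irreducible case via Proposition~\ref{prop:irreducibility} and Lemma~\ref{lem:proximal}---but if you were genuinely reproducing the proof in full generality that step would need more care.
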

The following result is due to Gol'dshe\u\i d and Margulis \cite{GM} (see also \cite[Lemma 4.6]{BQ2}).
\begin{proposition}\label{propernomeasure}
Let $\mu$ be a probability measure on $\GL(\C^{d})$. 
Let $\Gamma_{\mu}$ denote the smallest subsemigroup of $\GL(\C^{d})$ 
such that $\mu(\Gamma_{\mu})=1$.
Let $\nu$ be a $\mu$-stationary probability measure on $\P(\C^{d})$.
If $\Gamma_{\mu}$ acts strongly irreducibly on $\C^{d}$ then 
$\nu(\P (W))=0$ for any proper linear subspace $W$ of $\mathbb{C}^d$.
\end{proposition}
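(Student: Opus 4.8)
The plan is the classical Furstenberg-type argument built around an extremal proper subspace, where one minimises the dimension first and then maximises the $\nu$-mass. Suppose, towards a contradiction, that some proper nonzero subspace $W\subset\C^{d}$ has $\nu(\P(W))>0$, and let $r\geq 1$ be the smallest dimension for which this happens. I would then set
\[ c=\sup\{\nu(\P(W))\mid\dim_{\C}W=r\}>0 \]
and first establish that this supremum is attained, and attained by only finitely many subspaces. The key observation is that two distinct $r$-dimensional subspaces $W_{1},W_{2}$ meet in a subspace of dimension $<r$, so by minimality of $r$ one gets $\nu(\P(W_{1})\cap\P(W_{2}))=\nu(\P(W_{1}\cap W_{2}))=0$; hence the projective subspaces $\P(W)$ with $\dim W=r$ are pairwise $\nu$-essentially disjoint, and a Bonferroni estimate shows that for every $\epsilon>0$ at most $1/\epsilon$ of them satisfy $\nu(\P(W))>\epsilon$. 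Running $\epsilon\uparrow c$ along a sequence produces a decreasing chain of nonempty finite sets whose intersection is precisely the nonempty finite family $\mathcal{W}=\{W:\dim W=r,\ \nu(\P(W))=c\}$.

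Next I would feed this family into the stationarity identity $\nu=\int g_{\ast}\nu\,d\mu(g)$. For each $W\in\mathcal{W}$,
\[ c=\nu(\P(W))=\int\nu\bigl(\P(g^{-1}W)\bigr)\,d\mu(g), \]
and since every integrand is $\le c$ (the subspace $g^{-1}W$ again has dimension $r$, and $c$ is the maximum), equality forces $\nu(\P(g^{-1}W))=c$, i.e. $g^{-1}W\in\mathcal{W}$, for $\mu$-almost every $g$. Intersecting these conull sets over the finitely many $W\in\mathcal{W}$ yields a $\mu$-conull set of $g$ for which $W\mapsto g^{-1}W$ maps $\mathcal{W}$ into itself, hence — being injective on a finite set — permutes $\mathcal{W}$. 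Thus $S=\{g\in\GL(\C^{d}):g\ \text{permutes}\ \mathcal{W}\}$, which is a subgroup of $\GL(\C^{d})$ and in particular a subsemigroup, satisfies $\mu(S)=1$.

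To conclude, since $\Gamma_{\mu}$ is by definition the smallest subsemigroup of $\GL(\C^{d})$ of full $\mu$-measure, we obtain $\Gamma_{\mu}\subseteq S$. Writing $\mathcal{W}=\{W_{1},\dots,W_{k}\}$, this says that $\Gamma_{\mu}$ preserves the finite union $W_{1}\cup\cdots\cup W_{k}$ of proper subspaces of $\C^{d}$, contradicting the hypothesis that $\Gamma_{\mu}$ acts strongly irreducibly. I expect the single genuinely substantial step to be the finiteness of $\mathcal{W}$ and the attainment of the supremum $c$ — this is where the choice of extremal quantity and the minimality of $r$ are doing the work; everything after that is a formal consequence of stationarity together with strong irreducibility. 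This is, of course, the argument of Gol'dshe\u\i d and Margulis \cite{GM}; it is recalled here only for completeness.
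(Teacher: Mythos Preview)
Your argument is correct and is indeed the classical Furstenberg--Gol'dshe\u\i d--Margulis proof. Note, however, that the paper does not supply its own proof of this proposition: it is stated with attribution to \cite{GM} (with a pointer to \cite[Lemma~4.6]{BQ2}) and used as a black box, so there is nothing in the paper to compare your argument against. Your write-up could therefore serve as a self-contained justification that the paper omits; the only cosmetic point is that the attainment-and-finiteness step for $\mathcal{W}$, which you flag as the substantial part, is already completely handled by your ``decreasing chain of nonempty finite sets'' observation and needs no further elaboration.
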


\subsection{Controlling Finite Covers}
Recall that $\lambda(\mu,q)>0$ is the Lyapunov exponent associated to a finitely supported
probability measure $\mu$ on $\widehat{\mathcal{I}}_g^{(q)}$ and a representation $\rho_q:\widehat{\mathcal{I}}_g^{(q)}\to \U(g-1,g-1)$ (see Theorem
\ref{theorem:law-large-numbers}). Furthermore, let $\nu = \lim_{n\to\infty} \mu^{\star n}\star \delta_e$
whose existence is guaranteed by Proposition~\ref{thm:bqproj}.

\begin{corollary}\label{cor:probabilities-large-q} 
Let $q\geq 3$< then for every
$\alpha\in (0,\lambda(\mu,q))$ we have:
\[\mu^{*n}\left(\left\{\varphi\in \widehat{\mathcal{I}}_g^{(q)} \mid \vert (\rho_q(\varphi)e,f)\vert <
\exp(\alpha n) \right \} \right) \to 0,\]
as $n\to\infty$.
\end{corollary}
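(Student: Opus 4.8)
The plan is to factor the scalar $(\rho_q(\varphi)e,f)$ into a \emph{norm} part and a \emph{direction} part and to control the two using, respectively, the law of large numbers (Theorem~\ref{theorem:law-large-numbers}) and the equidistribution of the orbit $[\rho_q(\varphi)e]$ in $\P(V)$, where $V=\wedge^{g-1}\C^{2g-2}$. To this end, set $F([v])=|(v,f)|/\Vert v\Vert$ for $[v]\in\P(V)$; this is a well-defined continuous function on $\P(V)$ vanishing exactly on the projectivised hyperplane $\P(f^{\perp})$, and for every $\varphi\in\widehat{\mathcal{I}}_g^{(q)}$ one has $|(\rho_q(\varphi)e,f)|=\Vert\rho_q(\varphi)e\Vert\cdot F([\rho_q(\varphi)e])$. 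First I would fix $\alpha'$ with $\alpha<\alpha'<\lambda(\mu,q)$ and observe that if both $\Vert\rho_q(\varphi)e\Vert\geq e^{\alpha'n}$ and $F([\rho_q(\varphi)e])\geq e^{(\alpha-\alpha')n}$ hold, then $|(\rho_q(\varphi)e,f)|\geq e^{\alpha n}$. Hence the set $\{\varphi:|(\rho_q(\varphi)e,f)|<e^{\alpha n}\}$ is contained in $A_n\cup B_n$, where
\[ A_n=\Bigl\{\varphi:\tfrac1n\log\Vert\rho_q(\varphi)e\Vert<\alpha'\Bigr\},\qquad B_n=\Bigl\{\varphi: F([\rho_q(\varphi)e])<e^{(\alpha-\alpha')n}\Bigr\}, \]
and it is enough to prove $\mu^{*n}(A_n)\to0$ and $\mu^{*n}(B_n)\to0$.

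For $A_n$: since the increments $b_i$ are i.i.d.\ with law $\mu$, the product $b_n\cdots b_1$ has law $\mu^{*n}$, so $\mu^{*n}(A_n)=\beta(\{b:L_n(b)<\alpha'\})$ in the notation of Theorem~\ref{theorem:law-large-numbers}. That theorem gives $L_n\to\lambda(\mu,q)>\alpha'$ for $\beta$-almost every $b$, so the indicator functions of $\{L_n<\alpha'\}$ tend to $0$ almost surely; by dominated convergence $\mu^{*n}(A_n)\to0$ (one could alternatively use the $L^1$-statement of the theorem).

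For $B_n$: put $\delta_n=e^{(\alpha-\alpha')n}$, so $\delta_n\downarrow0$. Pushing $\mu^{*n}$ forward under the orbit map $\varphi\mapsto[\rho_q(\varphi)e]$ gives the measure $\mu^{*n}\star\delta_{[e]}$ on $\P(V)$, whence $\mu^{*n}(B_n)=(\mu^{*n}\star\delta_{[e]})(\{F<\delta_n\})$. Fix $\delta>0$; for all $n$ with $\delta_n\leq\delta$ we have $\{F<\delta_n\}\subseteq\{F\leq\delta\}$, and the latter is closed because $F$ is continuous. Since $\mu^{*n}\star\delta_{[e]}\to\nu$ (Proposition~\ref{thm:bqproj}), the portmanteau theorem yields $\limsup_n\mu^{*n}(B_n)\leq\nu(\{F\leq\delta\})$. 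Letting $\delta\downarrow0$, the closed sets $\{F\leq\delta\}$ decrease to $F^{-1}(0)=\P(f^{\perp})$, so $\nu(\{F\leq\delta\})\to\nu(\P(f^{\perp}))$. Finally $\nu(\P(f^{\perp}))=0$: the limit $\nu$ is $\mu$-stationary, $f^{\perp}$ is a proper linear subspace, and the image of $\widehat{\mathcal{I}}_g^{(q)}$ in $\GL(V)$ acts strongly irreducibly on $V$ by Lemma~\ref{lem:proximal} (using that $\mathrm{supp}(\mu)$ generates $\widehat{\mathcal{I}}_g^{(q)}$ as a semigroup), so Proposition~\ref{propernomeasure} applies. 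Thus $\mu^{*n}(B_n)\to0$, completing the argument.

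I expect the handling of $B_n$ to be the delicate step: exponential growth of $\Vert\rho_q(\varphi)e\Vert$ alone does not force the single coefficient $(\rho_q(\varphi)e,f)$ to be large, since the direction $[\rho_q(\varphi)e]$ could a priori drift into the hyperplane $\P(f^{\perp})$ where that coefficient vanishes; ruling this out is exactly what the equidistribution $\mu^{*n}\star\delta_{[e]}\to\nu$ together with $\nu(\P(f^{\perp}))=0$ provides. The one technical point is that the relevant neighbourhoods of the hyperplane shrink with $n$, which is why one first compares with a fixed closed neighbourhood $\{F\leq\delta\}$ and applies the portmanteau theorem, and only afterwards lets $\delta\downarrow0$.
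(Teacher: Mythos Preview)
Your proof is correct and follows essentially the same strategy as the paper's: factor $|(\rho_q(\varphi)e,f)|$ into the norm $\Vert\rho_q(\varphi)e\Vert$ and the directional quantity $F([\rho_q(\varphi)e])$, control the former with the law of large numbers (Theorem~\ref{theorem:law-large-numbers}) and the latter via weak convergence $\mu^{*n}\star\delta_{[e]}\to\nu$ combined with $\nu(\P(f^\perp))=0$ from Proposition~\ref{propernomeasure}. The only cosmetic difference is the bookkeeping of the decomposition: the paper fixes $\varepsilon>0$, chooses a single $\delta=\delta(\varepsilon)$ so that the direction lies outside $\{F<\delta\}$ with probability $\geq 1-\varepsilon/2$, and then shows $\Vert\rho_q(\varphi)e\Vert\geq e^{\alpha n}/\delta$ with probability $\geq 1-\varepsilon/2$; you instead introduce an intermediate exponent $\alpha'\in(\alpha,\lambda)$ and let the directional threshold $e^{(\alpha-\alpha')n}$ shrink with $n$, but your treatment of $B_n$ (compare with a fixed closed $\{F\leq\delta\}$, apply portmanteau, then let $\delta\downarrow 0$) effectively reproduces the paper's fixed-$\delta$ step.
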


\begin{proof} The claim will follow from two statements, namely:
  \begin{description}
  \item[I] For every $\varepsilon>0$ there exists a
    $\delta=\delta(\varepsilon)>0$ and an $n_0=n_0(\varepsilon)\in\mathbb{N}$ such that:
    \begin{equation*}
      \mu^{*n}\left(\left\{\varphi\in \widehat{\mathcal{I}}_g^{(q)} \mid \frac{\vert
            (\rho_q(\varphi)e,f)\vert}{\Vert\rho_q(\varphi)e\Vert} < \delta \right\}\right) <
      \varepsilon/2,
    \end{equation*}
    for all $n\geq n_0$.
  \item[II] For every $\varepsilon,\delta >0$ there exists an
    $n_1=n_1(\varepsilon,\delta)\in\mathbb{N}$ so that for all $n\geq
    n_1$ we have:
    \begin{equation*}
      \mu^{*n}\left(\left\{\varphi\in\widehat{\mathcal{I}}_g^{(q)}\mid \Vert\rho_q(\varphi)e\Vert <
          \exp(\alpha n)/\delta \right\} \right) < \varepsilon/2, 
    \end{equation*}
  \end{description}
  Assuming these for a moment, the claim is immediate: if $\vert
  (\rho_q(\varphi)e,f)\vert < \exp(\alpha n) $ then
  \[ \frac{\vert
    (\rho_q(\varphi)e,f)\vert}{\Vert\rho_q(\varphi)e\Vert} <
  \delta\;\; \text{or}\;\; \Vert\rho_q(\varphi)e\Vert < \exp(\alpha
  n)/\delta. \] 
  As such, combining \textbf{I} and \textbf{II} proves the corollary.

  \medskip
To prove \textbf{I}, we note that the condition 
$(\cdot,f)=0$ defines a linear subspace 
of $\wedge^{g-1}\C^{2g-2}$ of proper codimension. As such 
\[\nu\left(\left\{v\in \mathbb{P}(\wedge^{g-1}\C^{2g-2})  \mid \frac{\vert (v,f)\vert}{\Vert
v\Vert} =0 \right\} \right) =0\]
by Proposition~\ref{propernomeasure}. By regularity of $\nu$ and continuity of
the function 
\[x\mapsto \frac{\vert (x,f)\vert}{\Vert x \Vert}\]
on $\mathbb{P}(\wedge^{g-1}\C^{2g-2})$,  
there exists a number $\delta>0$ so that
\[\nu\left(\left\{v\in \mathbb{P}(\wedge^{g-1}\C^{2g-2}) 
 \mid \frac{\vert (v,f)\vert}{\Vert v\Vert} \leq  \delta \right\} \right) <
\varepsilon/4. \]
Therefore 
weak convergence of the measures $\mu^{*n}*\delta_e$ to $\nu$ implies
\[ \limsup_{n\to\infty} \mu^{*n}\star\delta_e  \left(\left\{v\in \mathbb{P}(\wedge^{g-1}\C^{2g-2}) 
 \mid \frac{\vert (v,f)\vert}{\Vert v\Vert} \leq \delta \right\} \right) < \varepsilon/4. \]
The inequality we are after for $n$ large enough now follows from the identity
\begin{align} \mu^{*n}\star\delta_e  \left(\left\{v\in \mathbb{P}(\wedge^{g-1}\C^{2g-2}) 
 \mid \frac{\vert (v,f)\vert}{\Vert v\Vert} \leq \delta \right\} \right)\notag\\
= \mu^{*n}\left( \left\{\varphi\in \widehat{\mathcal{I}}_g^{(q)} \mid \frac{\vert
(\rho_q(\varphi)e,f)\vert}{\Vert\rho_q(\varphi)e\Vert} \leq \delta \right\} \right).
\notag\end{align}

\medskip
We will prove Claim \textbf{II} using the law of large numbers (Theorem
\ref{theorem:law-large-numbers}). The $\mathrm{L}^1(B,\beta)$ convergence of the
functions $L_n:B\to\mathbb{R}$ to the constant function $\lambda$ implies that as $n\to\infty$
\[\int_B \vert L_n(b) - \lambda\vert d\beta(b) \to 0. \]
Let $\alpha'>0$ so that $\alpha<\alpha'<\lambda$. The convergence above tells us
that for all $\varepsilon>0$ there exists an $n_1\in\mathbb{N}$ so that for all
$n\geq n_1$:
\[\beta\left(\left\{b\in B\mid\; \frac{1}{n}\log\Vert \rho_q(b_n\cdots b_1)\cdot
e\Vert < \alpha' \right\}\right) < \varepsilon/2. \]
Because the condition on the left is only a condition on the initial $n$ entries of
the infinite path $b\in B$, using the relation between $\beta$ and $\mu$ described at the beginning of the section we have
\[
\beta\left(\left\{b\in B\mid\; L_n(b) \leq \alpha' n \right\}\right) =
\mu^{*n}\left(\left\{\varphi \in \widehat{\mathcal{I}}_g^{(q)}\mid\; \log\Vert \rho_q(\varphi)\cdot
e\Vert \leq \alpha' n \right\}\right). \]
By increasing $n_1$, we can make sure that $\exp(\alpha'n)>\exp(\alpha n)/\delta$.
This shows \textbf{II}.
\end{proof}

\subsection{The proof of the main theorems}
Putting all the above together, we can now prove Theorem~\ref{thm:main-theorem-plain}.
\begin{proof}[The proof of Theorem~\ref{thm:main-theorem-plain}] 
Let us denote by $\mathcal{B}\subset\Ig$ the set of those elements of the Torelli
group that do not have $E_\sigma$ (the condition we defined in the beginning of this section).
For a number $\alpha>0$ (chosen below) let $K_{\alpha, \mu}$ be the finite set given by 
Proposition~\ref{prop:matrix-condition}. The
same proposition shows that for any $q\in\mathbb{N}$ we have
\[ \mathcal{B} \cap \mathrm{supp}(\mu^{\ast n})\subset  \mathcal{A}^{(n)} \cup \bigcup_{k\in K_{\alpha,\mu}} \mathcal{C}_k \]
where
\[\mathcal{A}^{(n)} = \left\{\varphi\in \Ig \mid \vert
(\rho_q(\hat\varphi)e,f)\vert < \exp(\alpha n) \mbox{ for some lift }\hat\varphi\mbox{ of }\varphi\right\}, \]
\[\mathcal{C}_k = \left\{\varphi\in \Ig \mid \det(B_\infty(\varphi))\;\text{contains}\;\Phi_k\;\text{as a factor} )
\right\} \] 
Set $q=3$ and assume that $\alpha<\lambda(\mu,3)$ (in addition to further constraints below).
Consider the projection $\pi:\widehat{\mathcal{I}}_g^{(3)}\to \Ig$. By Lemma~\ref{lem:average}, we have
\[ \mu^{\ast n}(\mathcal{A}^{(n)}) = \zeta^{\ast n}(\pi^{-1}\mathcal{A}^{(n)}) \]
where $\zeta$ is the lifted measure as in Section~\ref{sec:lifting-walks}.
Now, 
\[ \zeta^{\ast n}(\pi^{-1}\mathcal{A}^{(n)}) = \zeta^{\ast n}\left( \left\{\hat\varphi\in \widehat{\mathcal{I}}_g^{(3)} \mid \vert
(\rho_3(\hat\varphi)e,f)\vert < \exp(\alpha n)\right\}\right) \]
and hence as $\alpha<\lambda(\mu,3)$, it follows from  
Corollary~\ref{cor:probabilities-large-q} that 
\[ \mu^{\ast n}(\mathcal{A}^{(n)}) = \zeta^{\ast n}(\pi^{-1}\mathcal{A}^{(n)}) \to 0 \quad (n\to \infty).\]

As $K_{\alpha,\mu}$ is a finite set, for the proof of the theorem it now suffices to 
show that for each fixed $k\in K_{\alpha,\mu}$ we have 
$\mu^{*n}({\mathcal C}_k)\to 0$ as $n\to \infty$. 

Thus let $k\in K_{\alpha,\mu}$ and assume first that $k\geq 3$. 
If $\det(B_\infty(\varphi))$ contains $\Phi_k$ as a factor, then Lemma~\ref{lem:determinant} implies that $(\rho_k(\hat\varphi)e,f)=0$ for a lift $\hat\varphi$ of $\varphi$ to $S_k$. Now the equation 
$(\cdot, f) = 0$ defines a proper linear subspace of
$\wedge^{g-1}\C^{2g-2}$, so arguing as in the proof of
Corollary~\ref{cor:probabilities-large-q} we see that for a finitely supported probability
measure $\zeta$ on $\widehat{\mathcal{I}}_g^{(k)}$ whose support generates  $\widehat{\mathcal{I}}_g^{(k)}$ as a semigroup, we have
that
\[ \zeta^{\ast n}\left( \left\{\tilde\varphi\in \widehat{\mathcal{I}}_g^{(k)} \mid 
(\rho_k(\hat\varphi)e,f) = 0\right\} \right) \to 0 \]
Using again Lemma~\ref{lem:average}, this implies that
\[ \mu^{*n}(\mathcal{C}_{k}) \to 0 \]
for $k\geq 3$.

\smallskip
We are thus left with controlling the case where $\det(B_\infty(\varphi))$
contains at least one of $\Phi_1$, $\Phi_2$ as a
factor, but no other $\Phi_k$ with $k\in K_{\alpha,\mu}$. In other words, we need
to show that 
\[ \mu^{\ast n}\left(\left(\mathcal{C}_1\cup\mathcal{C}_2\right)\setminus \bigcup_{k\in K_{\alpha,\mu}, k\geq 3} \mathcal{C}_k\right)\to 0. \]
To this end, we will decompose the set in question into two sets 
$\mathcal{D}_1,\mathcal{D}_2$
and show that their measures converge to $0$. Namely,
let $\mathcal{D}_i\subset \Ig$ be subset of the Torelli group where
\[\det(B_\infty(\varphi))=\Phi_1^{m_1}\cdot\Phi_2^{m_2}\cdot Q\]
for some polynomial $Q\in\Z[t]$ that does not contain $\Phi_k$ as a
factor for any $k\in\{1,2\}\cup K_{\alpha,\mu}$ and $m_1, m_2\in\mathbb{N}$ 
are such that $m_i\geq m_j\;\forall j\neq
i$ and $m_i\geq 1$. In other words, $\mathcal{D}_i$ is the set where of the two ``problematic'' polynomials $\Phi_1$ and $\Phi_2$, $\Phi_i$ appears with the largest exponent. Note
that indeed
\[ \left(\mathcal{C}_1\cup \mathcal{C}_2\right)\setminus \bigcup_{k\in K_{\alpha,\mu}, k\geq 3} 
\mathcal{C}_k =  \mathcal{D}_1\cup\mathcal{D}_2. \]
Note that there is a number $B>1$ so that
\[ |\Phi_i(\xi)| \leq B, \quad\quad\forall i=1,2\;\forall \xi\in\mathbb{C}, |\xi|=1. \]
Since primitive roots of unity of prime order are dense in the unit
circle, we can choose distinct primes $q_i > 2$ and $q_i$--th roots of unity $\zeta_{q_i}$
so that 
\[ |\Phi_i(\zeta_{q_i})|\leq B^{-1} \quad\quad\forall i=1,2 \]
and therefore
\[ \vert \Phi_i(\zeta_{q_i})\vert \cdot \max\left\{1, \vert
  \Phi_j(\zeta_{q_i})\vert\right\} \leq 1,\]
for $i\neq j\in\{1,2\}$. We will also assume from now on that the map $\iota:\Z[\Z/q_i\Z] \to \C$ introduced in
Section~\ref{subsec:rep} maps the generator $1 \in \Z/q_i\Z$ to
this chosen primitive $q_i$--th root of unity $\zeta_{q_i}$.

We further impose that $\alpha>0$ satisfies
\[ \alpha < \min\{\lambda(\mu,q_1),\;\lambda(\mu,q_2)\}\]

Note that by our choices, using Proposition~\ref{prop:mahler-measure-constraint}, for all $\varphi\in \mathcal{D}_i$ we have that for a lift $\hat\varphi$ 
\[\vert(\rho_{q_i}(\hat\varphi)e,f)\vert=\vert\det(B_\infty(\varphi))(\zeta_{q_i})\vert \leq \exp(\alpha n).\]
Namely,
\[ \vert\det(B_\infty(\varphi))(\zeta_{q_i})\vert = \]\[\vert\Phi_1^{m_1}(\zeta_{q_i})\cdot\Phi_2^{m_2}(\zeta_{q_i})\cdot Q(\zeta_{q_i})| \leq  |Q(\zeta_{q_i})| \]
and arguing as in the proof of Proposition~\ref{prop:matrix-condition} we have
\[|Q(\zeta_{q_i})| \leq \exp(\alpha'\deg(Q)) \leq \exp(\alpha'\deg(\det(B_\infty(\varphi))) <
\exp(\alpha n),\]
where $\alpha'$ is the number computed from $\alpha$ in that proof.

Hence, since $q_i > 2$, we can argue as in the case of the $\mathcal{A}^{(n)}$ to show that
\[ \mu^{\ast n}(\mathcal{D}_i) \to 0 \]

This finishes the proof of the theorem.
\end{proof}

We conclude with the proof of the second main theorem. Namely,
\begin{theorem}\label{thm:dt}
  Let $\mu$ be any measure on $\Ig$ (or a homology
  stabiliser) with finite support so that the support generates $\Ig$ as a semigroup. Fix $d>0$. Then
  \[ \mu^{\ast n}\left(\left\{ \varphi \in \Ig \left| \substack{\displaystyle{b_1(N'_\varphi)>b_1(N_\varphi) \mbox{ for some}}\\\displaystyle{\mbox{ Abelian cover } N'_\varphi \mbox{ of degree } \leq d}}\right.\right\}\right) \to 0\]
as $n\to\infty$.
\end{theorem}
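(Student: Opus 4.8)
The plan is to reduce the statement, first to cyclic covers and then to the finitely many conditions ``$\Phi_m\mid\det B_\infty(\varphi)$'' already controlled in the proof of Theorem~\ref{thm:main-theorem-plain}. To begin with, since $\varphi\in\Ig$ forces $\varphi_*L=L$, Lemma~\ref{lem:heegaard-homology} gives $b_1(N_\varphi)=g$ for \emph{every} $\varphi\in\Ig$, so the claim is that with probability tending to $1$ no Abelian cover of $N_\varphi$ of degree $\le d$ has $b_1>g$. First I would reduce to cyclic covers: if $N'\to N_\varphi$ is an Abelian cover with finite deck group $G$ and $b_1(N')>b_1(N_\varphi)$, then, decomposing $H_1(N';\C)$ into $G$--isotypic pieces and using $H_1(N_\varphi;\C)=H_1(N';\C)^{G}$, some nontrivial character $\chi$ of $G$ occurs in $H_1(N';\C)$; for the intermediate cyclic cover $N_\chi\to N_\varphi$ with deck group $G/\ker\chi$ (of order $m$ with $2\le m\le|G|\le d$) one has, by transfer, $H_1(N';\C)_\chi\subseteq H_1(N';\C)^{\ker\chi}=H_1(N_\chi;\C)$, while $H_1(N';\C)_\chi\cap H_1(N_\varphi;\C)=0$ since $\chi$ is nontrivial, so $b_1(N_\chi)>b_1(N_\varphi)$. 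Hence it suffices to bound, for each $q\le d$, the probability that some degree--$q$ cyclic cover of $N_\varphi$ has $b_1>g$. By Lemma~\ref{lem:heegaard-homology} these covers correspond to surjections $H_1(S;\Z)\to\Z/q$ killing $L$, i.e.\ to the finitely many covers $S_q\to S$ defined by algebraic intersection mod $q$ with a primitive class $a\in L$, and each induces a cover of $N_\varphi$ by Corollary~\ref{cor:covers-for-torelli-gluings}. Since $\mathcal H_g$ acts transitively on primitive vectors in $L$, I may pick $\psi\in\mathcal H_g$ with $\psi_*a=a_g$; then the polynomial $\det B_\infty(\varphi)$ computed in the cover defined by $a$ agrees, up to a unit, with the one computed in the cover defined by $a_g$ for $\psi\varphi\psi^{-1}\in\Ig$, and the conjugated measure is again finitely supported and generates $\Ig$. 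So it is enough to treat the single cover defined by $a_g$, for an arbitrary such measure.

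For that cover I would locate the Betti jump via Proposition~\ref{prop:describe-cover-homology-nontriv}: $b_1$ increases exactly when $L'_E+\widetilde\varphi_*L'_E$ fails to span $E\otimes\C$. Decomposing $E\otimes\C=\bigoplus_{\chi\ne 1}(E\otimes\C)_\chi$ into deck--group eigenspaces over the nontrivial characters of $\Z/q$, each $(E\otimes\C)_\chi$ is a copy of $\C^{2g-2}$ carrying the skew--Hermitian form of Section~\ref{subsec:rep}, $\widetilde\varphi_*$ acts on it as $\iota_\chi M_q(\varphi)\in\U(g-1,g-1)$, and $L'_E\otimes\C$ meets it in the Lagrangian $\langle x_1,\dots,x_{g-1}\rangle$. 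That Lagrangian and its image under $\iota_\chi M_q(\varphi)$ are transverse iff the lower--left block $\iota_\chi B_q(\varphi)$ is invertible; combined with Lemma~\ref{lem:determinant} and the fact that $\iota_\chi$ evaluates a representative of $\det B_\infty(\varphi)$ at the $m$--th root of unity $\chi(1)$, this shows the degree--$q$ cyclic cover has $b_1>g$ precisely when $\Phi_m\mid\det B_\infty(\varphi)$ for some $m\ge 2$ dividing $q$. Thus the theorem follows once one knows that for every $m$ with $2\le m\le d$ and every finitely supported measure $\mu$ generating $\Ig$,
\[ \mu^{\ast n}\bigl(\{\varphi\in\Ig:\Phi_m\mid\det B_\infty(\varphi)\}\bigr)\longrightarrow 0\qquad(n\to\infty), \]
a statement about finitely many $m$.

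For $m\ge 3$ this is exactly the estimate for the sets $\mathcal C_k$, $k\ge 3$, in the proof of Theorem~\ref{thm:main-theorem-plain}: $\{\Phi_m\mid\det B_\infty(\varphi)\}$ lies in $\{(\rho_m(\hat\varphi)e,f)=0\}$, and one invokes Zariski density of $\rho_m(\widehat{\mathcal I}_g^{(m)})$ in $\SU(g-1,g-1)$ (Corollary~\ref{zarsiki}), strong irreducibility and proximality of the exterior--power representation (Lemma~\ref{lem:proximal}), and the Benoist--Quint non--concentration statement (Proposition~\ref{propernomeasure}), arguing as in the proof of Corollary~\ref{cor:probabilities-large-q}. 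For $m=2$ the argument is formally identical, except that over $\Z[\Z/2]$ the involution is trivial, so the form is symplectic and the ambient group is $\Sp(2g-2)$ rather than $\SU(g-1,g-1)$; the input needed here is the Zariski density, in $\Sp(2g-2)$, of the image of $\widehat{\mathcal I}_g^{(2)}$ acting on the anti--invariant homology of a connected double cover, which is classical and can be read off along the lines of \cite{Lo}. Summing these finitely many estimates over $m\in\{2,\dots,d\}$ completes the proof. I expect the real work, beyond this routine summation, to be twofold: carefully transporting the constructions of Sections~\ref{sec:matrix}--\ref{sec:c-side} to each of the finitely many defining classes (and accompanying measures) produced by the reduction, and supplying the $q=2$ density statement, which lies outside the range $q\ge 3$ of Theorem~\ref{arithmeticity} and so must be imported separately.
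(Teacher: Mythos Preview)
Your approach is the paper's approach---reduce to cyclic covers, translate the Betti-number jump into a determinant condition, and invoke the Benoist--Quint machinery---but your execution is more careful than the paper's in two places worth noting.

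First, your character decomposition correctly identifies the condition for the degree-$q$ cyclic cover as $\Phi_m\mid\det B_\infty(\varphi)$ for \emph{some} $m\mid q$, $m\ge 2$. The paper's Proposition~\ref{prop:criterion-betti} instead claims the single equation $\iota(\det B_q(\varphi))=0$ suffices, based on the assertion that the kernel of $\iota:\Z[G]\to\C$ is exactly the trivial representation; that assertion (and hence the proposition as stated) is only correct for \emph{prime} $q$. Your decomposition over all nontrivial characters is the right formulation, and since you then sum over all $m\le d$ anyway, nothing is lost.

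Second, you correctly isolate the $m=2$ case as needing a separate Zariski-density input in $\Sp(2g-2)$, outside the range $q\ge 3$ of Corollary~\ref{zarsiki}. The paper's one-line ``arguing as in the proof of Theorem~1'' papers over this: the device used there for $\Phi_1,\Phi_2$ (choosing primes $q_i$ with roots of unity near $\pm 1$ so that the bad factors are small) relied on the Mahler-measure-zero hypothesis to bound the \emph{remaining} factor $Q$, and that hypothesis is unavailable here. So the $q=2$ density really must be supplied. The good news is that the proof of Proposition~\ref{prop:torelli-image-dense} goes through essentially verbatim with $\SU(g-1,g-1;R_q)$ replaced by $\Sp(2g-2,\Z)$ (still of $\Q$-rank $g-1\ge 2$, so Margulis applies), once one has the $q=2$ analogue of Theorem~\ref{arithmeticity}; this is indeed in Looijenga's paper, as you suspect.

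Your conjugation by $\psi\in\mathcal H_g$ to reduce every primitive $a\in L$ to $a_g$ (at the cost of replacing $\mu$ by a conjugate measure, which is harmless) is a clean simplification not in the paper.
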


As a first reduction,
note that it suffices to consider cyclic covers. This is due to the fact
that every representation of an Abelian group factors through a cyclic group. See the
discussion in \cite[Section~9.3]{DT} for details on this reduction. As there are only finitely 
many such cyclic covers, it suffices to show the conclusion for one.

Fix a cyclic cover $S_q \to S$. 
Let $\widehat{N_\varphi}$ be the cover of $N_\varphi$ defined by $S_q \to S$.
\begin{proposition}\label{prop:criterion-betti}
  The cover $\widehat{N_\varphi}$ has strictly larger Betti number than
$N_\varphi$ if and only if
  \[ \iota(\det B_q(\varphi)) = 0\]
\end{proposition}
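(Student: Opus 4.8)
The plan is to reduce the Betti number comparison to the vanishing of $\det B_q(\varphi)$ evaluated at a primitive $q$--th root of unity, using the rational decomposition of $H_1(S_q;\Q)$ and the presentation matrix from Section~\ref{sec:matrix}. By Proposition~\ref{prop:describe-cover-homology-nontriv}, $b_1(\widehat{N_\varphi}) > b_1(N_\varphi)$ precisely when $L'_E + \widetilde{\varphi}_* L'_E$ fails to be a lattice in $E = \ker(H_1(S_q;\Z)\to H_1(S;\Z))$; equivalently, when $L'_E\otimes\Q + \widetilde\varphi_*(L'_E\otimes\Q) \subsetneq E\otimes\Q$. So I would first identify $E\otimes\Q$, $L'_E\otimes\Q$, and the action of $\widetilde\varphi_*$ on them in terms of the $\Z[G]$--module structure worked out in Section~\ref{sec:finite-covers}.

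The key computation: over $\Q$ the group ring $\Q[G]$ splits as a product of cyclotomic fields $\prod_{d\mid q}\Q[\zeta_d]$, and $E\otimes\Q$ is exactly the part of $H_1(S_q;\Q)$ corresponding to the nontrivial summands (the trivial summand $d=1$ gives back $H_1(S;\Q)$). Restricting to the $\Q[\zeta_q]$--isotypic piece and applying $\iota$, the submodule $F_q\otimes\Q$ becomes $\C^{2g-2}$ with its splitting $C_q\oplus D_q \to \C^{g-1}\oplus\C^{g-1}$, and $L'_E$ maps into the image of $D_\infty$, i.e. the span of the $\widehat a_i$. Under $\iota$, $\widetilde\varphi_*$ restricted to this piece is the matrix $\iota M_q(\varphi)$, and the image of $D$ under it, projected to $C$, is governed precisely by $\iota B_q(\varphi)$ (this is the content of equation~(\ref{Bmatrix}) and Lemma~\ref{lem:determinant}). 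Thus $L'_E\otimes\Q + \widetilde\varphi_*(L'_E\otimes\Q)$ spans the relevant $\C^{2g-2}$ if and only if $\iota B_q(\varphi)$ is invertible, i.e. if and only if $\iota(\det B_q(\varphi))\neq 0$. One should also check that $\widetilde\varphi_*$ preserves $E$ and its rational splitting, which is already asserted in the text preceding Proposition~\ref{prop:describe-cover-homology-nontriv}, and that the trivial-$a_g,b_g$ directions and the other cyclotomic pieces $d\mid q$, $d\neq q$ contribute no extra defect — or, more cleanly, apply Proposition~\ref{prop:describe-cover-homology-nontriv} directly to the intermediate cover $S_q$ rather than to $S_\infty$, so that $E$ is finite rank and the "lattice" condition is literally full rank over $\Q$.

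The main obstacle I expect is bookkeeping the compatibility between the integral module $E$ (and $L'_E$) and the complex picture after applying $\iota$: one must be careful that $L'_E$ really corresponds, in each isotypic component, to (the image of) $D_q$, and that no part of $L'$ outside $F_q$ (the $\widehat a_g$, $\widehat b_g$ directions) interferes — these lie in the trivial $G$--representation, hence in $H_1(S;\Q)$, not in $E\otimes\Q$, so they drop out, but this needs to be said. Once that identification is pinned down, the equivalence "$\iota B_q(\varphi)$ invertible $\iff$ full rank" is immediate, and $\iota\det B_q(\varphi) = \det(\iota B_q(\varphi))$ finishes it. Note the statement is insensitive to the choice of lift of $\varphi$ and of $\iota$, since these only rescale $\det B_q(\varphi)$ by a unit, which does not affect vanishing.
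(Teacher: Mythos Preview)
Your proposal is correct and follows the same route as the paper: both invoke Proposition~\ref{prop:describe-cover-homology-nontriv} to reduce to whether $L'_E\otimes\Q + \widetilde\varphi_*(L'_E\otimes\Q)$ spans $E\otimes\Q$, and then apply $\iota$ (the paper packages this as a single map $\mathcal{J}:H_1(S_q;\Z)\to\C^{2g-2}$, zero on the trivial summand) to identify this with the invertibility of $\iota B_q(\varphi)$. Your caution about the intermediate cyclotomic pieces $d\mid q$, $1<d<q$, is in fact sharper than the paper's treatment---the assertion there that $\ker(\iota:\Z[G]\to\C)$ is exactly the trivial representation holds as stated only for $q$ prime, and for composite $q$ those extra pieces are tacitly absorbed (in the application to Theorem~\ref{thm:dt}) by the intermediate cyclic covers of degree $d$.
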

\begin{proof}
  Recall that $E = \ker( H_1(S';\Z) \to H_1(S;\Z) )$ and that 
  $H_1(S';\Z) = \Z[G]^{2g-2} \oplus \Z^2$. We define a map
  \[ \mathcal{J}: H_1(S'; \Z) \to \C^{2g-2} \] 
  by setting it to be zero on
  the trivial summand $\Z^2$ and applying the map $\iota$ (from
  Section~\ref{subsec:rep}) coordinate-wise on $\Z[G]^{2g-2}$. 
  As the kernel of $\iota:\Z[G]\to\C$ is exactly the trivial representation contained
  in $\Z[G]$, the homomorphism $\mathcal{J}$ 
induces an isomorphism of $E$ onto its image
  $\mathcal{J}(E)$. Consequently, 
the induced map $\mathcal{J}_\Q:H_1(S';\Q) \to \C$ induces an 
  isomorphism from $E\otimes \Q$ onto its image $\mathcal{J}_\Q(E\otimes \Q)$

  By Proposition~\ref{prop:describe-cover-homology-nontriv}, we have
  that $b_1(\widehat{N_\varphi}) > b_1(N_\varphi)$ if and only if
  \[ (L'_E + \widehat{\varphi}_*L_E')\otimes\Q \subsetneq E\otimes\Q \]
  is a proper subspace, or equivalently, if
  \[ \iota(L'_E\otimes\Q) + \iota(\widehat{\varphi}_*L_E'\otimes\Q) \subset
\iota(E\otimes \Q) \]
  is a proper subspace. By definition of $B_q$, the latter is the case exactly if 
  $\iota(\det B_q(\varphi)) \neq 0$. This shows the proposition.  
\end{proof}
Arguing as in the proof of Theorem~1, the asymptotic probability that
the determinants $\det B_q(\varphi)$ satisfy the (algebraic) condition
from Proposition~\ref{prop:criterion-betti} will converge to $0$, proving Theorem~2.

\begin{remark}
  In order to extend \cite[Theorem~9.1]{DT} to genus $g \geq 3$ using this method, one
  can argue as follows. As above, it suffices to consider cyclic covers. For a fixed
  cyclic cover, the subgroup $\Gamma$ of the mapping class group which does lift to
the cover and
  so that lifts commute with the deck group action has finite index in the mapping
class group.
  Standard equidistribution results for random walks on finite graphs can be used to
show that
  the desired result is true for a random walk on the mapping class group if it is
true for a 
random walk on $\Gamma$. On $\Gamma$ one can define the representation $\rho$ as
before, and the argument given for Theorem~\ref{thm:dt} applies.
\end{remark}

We end with a sketch of the following result, which addresses a question in \cite[p. 139]{K}.
\begin{theorem}\label{thm:kowalski}
  Let $\mu$ be a probability measure on
  $\mathrm{Mod}(S_g)$ whose finite support generates $\mathrm{Mod}(S_g)$ as a
  semigroup. Then there is an $\alpha > 0$ so that
  \[ \mu^{\ast n}\left( \{\varphi\in\mathrm{Mod}(S_g) \vert \#(H_1(N_\varphi;\mathbb{Z})_{\mathrm{tors}}) < \exp(\alpha n) \right) \to 0 \]
\end{theorem}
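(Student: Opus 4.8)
The plan is to reduce to a statement about random products of integral symplectic matrices and then rerun the argument of Corollary~\ref{cor:probabilities-large-q}, with the $g$-th exterior power of the standard representation of $\Sp(2g,\Z)$ in place of $\rho_q$.

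Push $\mu$ forward along the surjection $\mathrm{Mod}(S_g)\to\Sp(2g,\Z)$, $\varphi\mapsto\varphi_*$, to a finitely supported measure $\bar\mu$ on $\Sp(2g,\Z)$; its support still generates $\Sp(2g,\Z)$ as a semigroup, and $\bar\mu^{*n}$ is the pushforward of $\mu^{*n}$. For $M\in\Sp(2g,\Z)$ let $C(M)$ be the lower-left $g\times g$ block of $M$ with respect to the symplectic basis $a_1,\dots,a_g,b_1,\dots,b_g$, for which $L=\Span_\Z\{a_i\}$. By Lemma~\ref{lem:heegaard-homology}, $H_1(N_\varphi;\Z)\cong\Z^g/C(\varphi_*)\Z^g$, so whenever $\det C(\varphi_*)\neq 0$ the group $H_1(N_\varphi;\Z)$ is finite of order $|\det C(\varphi_*)|$ and equals its own torsion subgroup. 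Hence it suffices to produce $\alpha>0$ with
\[\bar\mu^{*n}\bigl(\{M\in\Sp(2g,\Z)\ :\ |\det C(M)|<\exp(\alpha n)\}\bigr)\longrightarrow 0,\]
since on the complementary event $N_\varphi$ is a rational homology sphere with $|H_1(N_\varphi;\Z)_{\mathrm{tors}}|=|\det C(\varphi_*)|\geq\exp(\alpha n)$.

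Just as in Lemma~\ref{lem:determinant}, the block determinant is an exterior-power coefficient. Put $v=a_1\wedge\cdots\wedge a_g$ and $w=b_1\wedge\cdots\wedge b_g$ in $\wedge^g\C^{2g}$ and let $(\cdot,\cdot)$ be the Hermitian form making the standard basis of $\wedge^g\C^{2g}$ orthonormal; the Leibniz expansion gives $\det C(M)=(\wedge^g M\cdot v,\,w)$. Since $L$ and $\Span\{b_i\}$ are Lagrangian, $v$ and $w$ are annihilated by the contraction $\wedge^g\C^{2g}\to\wedge^{g-2}\C^{2g}$ with the symplectic form, hence lie in its kernel $W\subseteq\wedge^g\C^{2g}$, which is an irreducible $\Sp(2g,\C)$-submodule. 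Let $\rho\colon\Sp(2g,\Z)\to\GL(W)$ be the restriction of $\wedge^g$. Then $\rho$ is strongly irreducible (irreducible since $W$ is an irreducible $\Sp(2g,\C)$-module, strongly so because $\Sp(2g,\C)$ is connected) and proximal: a loxodromic diagonal element $\mathrm{diag}(t_1,\dots,t_g,t_1^{-1},\dots,t_g^{-1})$ with $t_1>\cdots>t_g>1$ acts on $\wedge^g\C^{2g}$ with a unique eigenvalue of maximal modulus, $t_1\cdots t_g$, whose eigenline is spanned by $v\in W$. These are exactly the hypotheses of Proposition~\ref{prop:irreducibility} and Lemma~\ref{lem:proximal}. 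Moreover the Zariski closure of the semigroup generated by $\mathrm{supp}(\bar\mu)$ is the semisimple group $\Sp(2g,\C)$, and $\det\rho\equiv 1$; hence the Lyapunov exponents of $\bar\mu$ under $\rho$ sum to $0$, and together with the simplicity of the top exponent coming from proximality this forces $\Lambda:=\Lambda_1(\rho)>0$.

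Now fix $\alpha\in(0,\Lambda)$ and repeat the proof of Corollary~\ref{cor:probabilities-large-q} with $(\rho,v,w)$ in the roles of $(\rho_q,e,f)$: the Benoist--Quint law of large numbers (as in Theorem~\ref{theorem:law-large-numbers}, whose proof only uses strong irreducibility and proximality of the representation) gives $\|\rho(M)v\|\geq\exp(\alpha' n)/\delta$ with $\bar\mu^{*n}$-probability tending to $1$ for any $\alpha<\alpha'<\Lambda$ and fixed $\delta>0$; and Propositions~\ref{thm:bqproj} and~\ref{propernomeasure} show that the stationary limit of $\bar\mu^{*n}\star\delta_{[v]}$ charges no proper projective subspace of $\P(W)$, in particular not the hyperplane $\{(\cdot,w)=0\}$ (proper because $w\neq 0$), so $|(\rho(M)v,w)|/\|\rho(M)v\|>\delta$ with $\bar\mu^{*n}$-probability tending to $1$ for a suitable $\delta>0$. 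Multiplying the two estimates gives $|\det C(M)|=|(\rho(M)v,w)|\geq\exp(\alpha n)$ with $\bar\mu^{*n}$-probability tending to $1$, which proves the theorem. The main point to verify is the representation theory of the third paragraph --- that $v$ and $w$ lie in a common strongly irreducible, proximal constituent $W$ of $\wedge^g\C^{2g}$, and that $\Lambda>0$ --- once this is in hand the probabilistic argument is word-for-word that of Corollary~\ref{cor:probabilities-large-q}.
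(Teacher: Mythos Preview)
Your proposal is correct and follows essentially the same approach as the paper's sketch: reduce to the bottom-left block of the symplectic action on homology, express its determinant as the $w$-coefficient of $\wedge^g\varphi_*\cdot v$ in the irreducible $\Sp$-constituent $W\subset\wedge^g\C^{2g}$, and then rerun the argument of Corollary~\ref{cor:probabilities-large-q} using the Benoist--Quint law of large numbers together with Propositions~\ref{thm:bqproj} and~\ref{propernomeasure}. Your description of $W$ as the kernel of contraction with the symplectic form is equivalent to the paper's description of $W$ as the span of wedge products of Lagrangian bases, and you supply several details (why $v,w\in W$, an explicit proximal element, why $\Lambda>0$) that the paper leaves implicit.
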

\begin{proof}[Sketch of proof]
  Consider the standard representation
  \[ \rho:\mathrm{Mod}(S_g)\to \mathrm{Sp}(2g, \mathrm{Z}) \] on the
  homology of the surface. It is known that this is surjective. As in
  Section~\ref{sec:matrix}, one can define a ``bottom-left block'' $B(\varphi)$ of $\rho(\varphi)$ so that 
  \[ \#(H_1(N_\varphi;\mathbb{Z})_{\mathrm{tors}}) = \det
  B(\varphi) \] supposing that the determinant $\det B(\varphi)$ is
  nonzero. Now, one can use the representation $\rho$ in place of the
  $\rho_q$ to let the mapping class group act on Lagrangian subspaces of $\R^{2g}$. 
  
  Given a basis $v_1,\ldots,v_g\in\R^{2g}$ of a Lagrangian subspace of $\R^{2g}$, we obtain a vector $v_1\wedge\cdots\wedge v_g\in \wedge^g\R^{2g}$. The action of $\mathrm{Sp}(2g, \Z)$ on the subspace $W\subset \wedge^g\R^{2g}$ spanned by all vectors obtained from Lagrangian subspaces of $\R^{2g}$ is known to be irreducible and proximal. As such the same results of Gol'dshe\u\i d and Margulis \cite{GM} and Benoist and Quint \cite{BQ,BQ2} that are used in Section \ref{sec:random-walks} apply.
  
  Thus one can show that, first, with probability converging to $1$, $\det B(\varphi)
  \neq 0$ (as the opposite is a proper subspace of $W$), and in fact it does grow exponentially fast with the length
  $n$ of the walk (by the law of large numbers).
\end{proof}

\vspace{0.5cm}


\noindent{\bf Current affiliations of the authors:} 

\bigskip

\noindent
Hyungryul Baik (hrbaik@kaist.ac.kr)\\
\noindent
DEPARTMENT OF MATHEMATICAL SCIENCES, KAIST  \\
291 DAEHAK-RO, YUSEONG-GU,  DAEJEON,\\
34141, REPUBLIC OF KOREA \\

\vspace{0.5cm}

\noindent
David Bauer (david-bauer@uni-bonn.de)\\
Ursula Hamenst{\"a}dt (ursula@math.uni-bonn.de)\\
Sebastian Hensel (hensel@math.uni-bonn.de)\\
Thorben Kastenholz (tkastenholz@gmx.de)\\
Bram Petri (bpetri@math.uni-bonn.de) \\
Daniel Valenzuela (daniel@valenzuela.de)\\
\noindent
MATHEMATISCHES INSITUT DER UNIVERSIT\"AT BONN\\
ENDENICHER ALLEE 60, 53115 BONN\\
GERMANY

\vspace{0.5cm}

\noindent
Ilya Gekhtman (ilya.gekhtman@yale.edu)

\noindent
DEPARTMENT OF MATHEMATICS, YALE UNIVERSITY\\
10 HILLHOUSE AV, NEW HAVEN, CONNECTICUT 06520\\
USA

\end{document}